\pgfplotsset{compat=newest}
\crefname{section}{section}{sections}
\crefname{subsection}{subsection}{subsections}
\Crefname{figure}{Figure}{Figures}
\DeclareMathOperator{\R}{\mathbb{R}}
\newcommand{\norm}[1]{\ensuremath{\left\lVert#1\right\rVert}}
\newcommand{\abs}[1]{\ensuremath{\left\lvert#1\right\rvert}}
\newcommand{\set}[1]{\ensuremath{\left\lbrace#1\right\rbrace}}
\newcommand{\bx}{\ensuremath{\mathbf{x}}}
\newcommand{\by}{\ensuremath{\mathbf{y}}}
\newtheorem{theorem}{Theorem}[section]
\newtheorem{corollary}{Corollary}[section]
\newtheorem{lemma}{Lemma}[section]
\newtheorem{proposition}{Proposition}[section]
\newtheorem{definition}{Definition}[section]
\newtheorem{remark}{Remark}[section]
\newcommand{\corr}[1]{#1}
\newcommand{\add}[1]{#1}
\newcommand{\newadd}[1]{#1}
\let\refcite\citen
\def\keywords#1{\par
	\vspace*{8pt}
	{{\footnotesize\leftskip18pt\rightskip\leftskip
			\noindent{\it Keywords}\/:\ #1\par}}\par}
\def\ccode#1{\par
	\vspace*{8pt}
	{{\footnotesize\leftskip18pt\rightskip\leftskip
			\noindent #1\par}}\par}
\renewcommand\@biblabel[1]{#1.} 
\title{A parabolic local problem with exponential decay of the resonance error for numerical homogenization}
\author{%
	Assyr Abdulle\footnote{Institute of Mathematics, \'Ecole Polytechnique F\'ed\'erale de Lausanne, 1015, Station 8, Lausanne, CH-1015, Switzerland, {assyr.abdulle@epfl.ch}} %
	\and Doghonay Arjmand\footnote{Division of Applied Mathematics, School of Education, Culture and Communication, M{\"a}lardalen University, Box 883, 721 23, V{\"a}ster{\aa}s, Sweden, doghonay.arjmand@mdh.se} %
	\and Edoardo Paganoni\footnote{Institute of Mathematics, \'Ecole Polytechnique F\'ed\'erale de Lausanne, Station 8, Lausanne, CH-1015, Switzerland, {edoardo.paga@gmail.com}} }
\date{}
\begin{document}
	\maketitle
	
%
\begin{abstract}
	This paper aims at an accurate and efficient computation of effective quantities, e.g., the homogenized coefficients for approximating the solutions to partial differential equations with oscillatory coefficients. Typical multiscale methods are based on a micro-macro coupling, where the macro model describes the coarse scale behaviour, and the micro model is solved only locally to upscale the effective quantities, which are missing in the macro model. The fact that the micro problems are solved over small domains within the entire macroscopic domain, implies imposing artificial boundary conditions on the boundary of the microscopic domains. A naive treatment of these artificial boundary conditions leads to a first order error in $\varepsilon/\delta$, where $\varepsilon < \delta$ represents the characteristic length of the small scale oscillations and $\delta^d$ is the size of micro domain. This error dominates all other errors originating from the discretization of the macro and the micro problems, and its reduction is a main issue in today's engineering multiscale computations. The objective of the present work is to analyse a parabolic approach, \corr{first announced in [A.~Abdulle, D.~Arjmand, E.~Paganoni, C. R. Acad. Sci. Paris, Ser. I, 2019]}, for computing the homogenized coefficients with arbitrarily high convergence rates in $\varepsilon/\delta$. The analysis covers the setting of periodic microstructure, and numerical simulations are provided to verify the theoretical findings for more general settings, e.g. \add{non-periodic} micro structures.
%
\end{abstract}

\keywords{resonance error, Green's function, effective coefficients, correctors, numerical homogenization}

\ccode{AMS Subject Classification: 35B27, 76M50, 35K20, 65L70}


\section{Introduction}\label{sec:intro}

Multiscale problems involving several spatial and temporal scales are ubiquitous in physics and engineering. We mention for example, stiff stochastic differential equations (SDEs) in biological and chemical systems, oscillatory physical systems, partial differential equations (PDEs) with multiscale data resonance, e.g., mechanics of composite materials, fracture dynamics of solids, PDEs with oscillating parameters, see Ref. \refcite{Abd09a,E11,HeM14,HFM98,MaP14} and the references therein. 
A common computational challenge in relation with such multiscale problems is the presence of small scales in the model which should be represented over a much larger macroscopic scale of interest. One rather classical way of overcoming this issue is to analytically derive macroscopic equations from a given microscopic model, and then solve the resulting macroscale equation at a cheaper computational cost. However, such derivations often come together with some simplifying assumptions, making the accuracy of the macroscopic model questionable once the restrictive assumptions are relaxed. \corr{In contrast}, multiscale numerical methods result in models with improved accuracy and efficiency as they rely on a coupling between microscopic and macroscopic models, combining the efficiency of macroscopic models with the accuracy of microscopic ones. Inexact couplings may afflict such methods by the so-called \emph{resonance error}, Ref. \refcite{Abd09a,HoW97}. Reducing such an error is a common problem of modern multiscale methods designed over the last two decades.
	

This paper concerns the numerical homogenization of elliptic partial differential equations with multiscale coefficients, whose oscillation length scale (denoted by $\varepsilon$) is much smaller than the size of the domain $\Omega \subset \R^d$, which is bounded and convex. Our model problem is the following $\varepsilon$-indexed family of elliptic equations on $\Omega$

\begin{equation}
\label{eq:model problem}
\left\lbrace
\begin{aligned}
-\nabla \cdot \left( a^{\varepsilon}(\bx) \nabla u^{\varepsilon} \right) &= f & \quad  & \text{in }\Omega \\
u^{\varepsilon} &= 0 & \quad & \text{on } \partial \Omega .
\end{aligned}
\right.
\end{equation}
Here $a^{\varepsilon}(\bx) \in \left[ L^{\infty} (\Omega) \right]^{d \times d}$ is symmetric, uniformly elliptic and bounded, i.e., $\exists \, \alpha,\, \beta > 0$ such that
\begin{equation}\label{eq: continuity coercivity assumption}
\alpha \abs{ \zeta }^2 \le \zeta \cdot a^{\varepsilon}(\bx) \zeta \le \beta \abs{ \zeta }^2 ,\, \forall \zeta \in \R^d ,\, \text{ a.e. } \bx \in \Omega ,\, \forall \varepsilon>0.
\end{equation}
The well-posedness of the original problem \cref{eq:model problem} is then well-known for any $f  \in H^{-1}(\Omega)$. As ${\varepsilon\to 0}$, the solution of \cref{eq:model problem} can be approximated, by the solution of the so-called homogenized equation:
\begin{equation} \label{eq:homogenized elliptic}
\left\lbrace
\begin{aligned}
-\nabla \cdot \left( a^{0}(\bx) \nabla u^{0} \right) &= f & \quad & \text{in }\Omega \\
u^{0} &= 0 & \quad & \text{on } \partial \Omega ,
\end{aligned}
\right.
\end{equation}
where the coefficients $a^0_{ij}$ (and hence the solution $u^0$)  no longer oscillate at the $\varepsilon$-scale. By using the concepts of $G$-convergence for the symmetric case, Ref. \refcite{Spa68}, 
one can show that the homogenized problem \cref{eq:homogenized elliptic} is the limit for $\varepsilon \to 0$ of a subsequence of problems \cref{eq:model problem}. In general, we do not have explicit formulae for evaluating the homogenized tensor, unless certain structural assumptions on $a^{\varepsilon}(\bx)$ are made. For example, if $a^{\varepsilon}(\bx) = a(\bx/\varepsilon)$  is \add{$K$-}periodic, then the homogenized tensor \corr{$a^0$} is given by
\begin{equation} \label{eq:a0}
 \mathbf{e}_i \cdot a^0  \mathbf{e}_j = \fint_K \mathbf{e}_i \cdot a(\bx) \left( \mathbf{e}_j + \nabla \chi^j (\bx) \right) \,d\bx ,\quad i,j = 1,\dots,d,
\end{equation}
where $K:=(-1/2,1/2)^d$ is the unit cube in $\mathbb{R}^{d}$, and the functions $\lbrace \chi^i \rbrace_{i=1}^d$ are the solutions of the so-called \emph{cell problems}:
\begin{equation} \label{eq:periodic micro}
\left\lbrace
\begin{aligned}
- &\nabla\cdot\left(a(\bx)\nabla \chi^i \right) = \nabla\cdot\left(a(\bx) \mathbf{e}_i\right) \qquad \text{in } K, \\
& \chi^i \quad K\text{-periodic}.
\end{aligned}
\right. 
\end{equation}
In \eqref{eq:a0} and \eqref{eq:periodic micro} we have used the substitution $\mathbf{y}=\frac{\bx}{\varepsilon}$, mapping a sampling domain  of size $\varepsilon^d$ to the unit cube $K$. For simplicity of notation, we will again denote by $\bx$ (instead of $\mathbf{y}$) the variable on the unit cube.
We refer to Ref. \refcite{BLP78,CiD99,JKO94} for further technical details. 

When the period of the microstructure is not known exactly or the periodicity assumption is relaxed (e.g., if $a$ is random stationary ergodic or quasi-periodic \corr{tensor}), the formula \eqref{eq:a0} breaks down. In this case, \eqref{eq:a0} may be replaced by 
 \begin{equation} \label{eq:bar_a0_Intro}
 \mathbf{e}_i\cdot  a^{0}_{R} \mathbf{e}_j = \fint_{K_R} \mathbf{e}_i \cdot a(\bx) \left( \mathbf{e}_j + \nabla \chi_R^j(\bx) \right) \,d\bx, \quad i,j = 1,\dots,d,
 \end{equation}  
where $K_R :=(-R/2,R/2)^d$, and\footnote{In \eqref{eq: microproblem_Intro}, the choice of the boundary condition (BC) is not unique, and the homogeneous Dirichlet BCs can be safely replaced e.g., by periodic BCs without any change in \eqref{eq:ModellingError}.} 
 \begin{equation}
\label{eq: microproblem_Intro}
 \left\lbrace
 \begin{aligned}
 -&\nabla\cdot\left(a(\bx)\nabla \chi_R^i \right) = \nabla\cdot\left(a(\bx) \mathbf{e}_i\right) & \quad  & \text{in } K_R \\
 &\chi_R^i(\bx) = 0  &\quad & \text{on } \partial K_{R},
 \end{aligned}
 \right.
 \end{equation}
and the homogenized coefficient $a^{0}$ is given by 
\begin{equation*}
a^{0}  = \lim_{R \to \infty } a^{0}_{R}.
\end{equation*}
Assume for a moment that \corr{the tensor} $a$ is $K$-periodic and that periodic BCs are imposed in \eqref{eq: microproblem_Intro}, then the homogenized \corr{tensor} $a^{0}$ will be equal to $a^{0}_{R}$ only when $R$ is an integer. When $R$ is not an integer, there will be a difference between $\chi_R$ and $\chi$ on $\partial K_R$, which results in a so-called resonance error, Ref. \refcite{AEE12,EE03,EMZ05}\footnote{We denote the resonance error by $e_{MOD}$, as in Ref. \refcite{AEE12}.}, 
 \begin{equation} \label{eq:ModellingError}
 e_{MOD} :=  \norm{  a^{0}_{R} - a^0 }_{F} \leq C \dfrac{1}{R},
 \end{equation} 
 where $\norm{  \cdot }_{F}$ denotes the Frobenius norm for a tensor. 
 Note that the first order rate is valid also when the problem \eqref{eq: microproblem_Intro} is equipped with periodic BCs. From a computational point of view, this first order decay rate of the error is the efficiency and accuracy bottleneck of numerical upscaling schemes, i.e., in order to reduce the resonance error down to practically reasonable accuracies, one needs to solve the problem \eqref{eq: microproblem_Intro} over large computational domains $K_R$, possibly on each quadrature point of a macro computational domain (see Ref. \refcite{AEE12,EE03}), which becomes prohibitively expensive. Our central goal is then to design micro models which reduce the resonance error down to desired accuracies without requiring a substantial enlargement of the computational domain $K_R$. 
 \add{ In \cref{subsec: existing approaches}}, we provide a review of existing strategies, some of which improve the decay rate of the resonance error.
  
 \add{In this work, we restricted ourselves to the analysis of the resonance error for the parabolic local problems under the assumption that the coefficients are periodic and oscillate only at the $\varepsilon$-scale, $a^{\varepsilon}(\bx)= a(\bx/\varepsilon)$. 
 The analysis can be extended to the case of locally periodic media, $a^{\varepsilon}(\bx)=a(\bx,\bx/\varepsilon)$, where the coefficients are $\varepsilon$-periodic in the second argument. In such a case, the parabolic micro problems can still be employed to approximate the homogenized coefficients in the quadrature points of the macro computational domain at the cost of introducing a \emph{collocation} error. 
 Having to fix the first variable of $a(\bx,\by)$ in the micro model computations produces the new error term.
 The collocation error has been previously analysed in the context of elliptic cell problems, e.g. in Ref. \refcite{Abd11b}, and similar ideas can be extrapolated to the setting of the present work.   
}

\add{The present work addresses the problem of computing the homogenized limit for a single equation with oscillating coefficients. The techniques used here, in particular the use of the fundamental solution of parabolic equations, are not directly extendable to linear systems, as problems in linear elasticity. Nevertheless, we believe that the ideas presented here can provide a fairly good road-map for generalizing the parabolic approach to such more complex cases.}
\subsection{Existing approaches for reducing the resonance error}\label{subsec: existing approaches} 
Over the last two decades, several interesting approaches have been proposed to reduce the resonance error. These strategies can be classified in two classes: a) Methods which reduce the prefactor (but not the convergence rate) in \eqref{eq:ModellingError}, b) Methods which improve the convergence rate. 

\emph{a) Methods reducing the prefactor only:}

One of the very first approaches to reduce the prefactor is based on the idea of oversampling, see Ref. \refcite{HoW97}. In oversampling, the cell problem \eqref{eq: microproblem_Intro} is solved over $K_{R}$, while the computation of the homogenized coefficient takes place in an interior domain $K_{L} \subset K_{R}$. Another attempt is based on exploring the combined effect of oversampling and imposing different BCs (Dirichlet, Neumann and periodic) for \eqref{eq: microproblem_Intro}, see Ref. \refcite{YuE07}. It has been found that the periodic BCs perform better than the other two. Moreover, the Dirichlet BCs tend to overestimate the effective coefficients, while Neumann BCs underestimate them. Clearly, the use of these strategies becomes questionable if one is interested in practically relevant error tolerances, since there is still a need for substantially enlarging the computational domain $K_R$ before reaching a satisfactory accuracy. 

\emph{b) Methods improving the convergence rate:}

Several methods which rely on modifying the cell problem \eqref{eq: microproblem_Intro}, while still retaining a good approximation (with higher order convergence rates in $1/R$) of the homogenized coefficient have been developed in the last few years. In Ref. \refcite{BlL10}, an approach with weight (filtering) functions in the very definition of the cell problem, as well as in the averaging formula, is proposed. While the method has arbitrarily high convergence rates in a one-dimensional setting, the convergence rate in dimensions $d>1$ has been proved to be $2$. Numerical simulations demonstrate the optimality of the \corr{second order rate in dimension} $d>1$.

Another promising strategy
is to add a small zero-th order regularization term to the cell problem \eqref{eq: microproblem_Intro} so as to make the associated Green's function exponentially decaying.
\add{This idea was originally introduced in the context of stochastic homogenization Ref. \refcite{GlO11a,GlO12a}, and an analysis of the boundary error in the periodic case can be found in Ref. \refcite{Glo11}.}
The effect of the boundary mismatch will then decay exponentially fast in the interior of $K_{L} \subset K_{R}$. However, the method will suffer from a bias (or systematic error) due to added  regularization term, which limits the convergence rate to fourth order. Moreover, \corr{numerical simulations in Ref. \refcite{Glo11}} show that the method requires very large values of $R$ to achieve the optimal fourth order asymptotic rate.
\add{ In Ref. \refcite{GlH16,GNO15}}, Richardson extrapolation is used to increase the convergence rate to higher orders at the expense of solving the cell problem several times with different regularization terms.
	
An interesting idea, proposed in Ref. \refcite{ArR16b,ArS16}, is to solve a second order wave equation on $K_{R} \times (0,T)$, instead of the elliptic cell problem \eqref{eq: microproblem_Intro}, see also Ref. \refcite{ArR17} for an analysis in locally-periodic media. Thanks to the finite speed of propagation of waves, this approach leads to an ultimate removal of the error due to inaccurate BCs if $K_R$ is sufficiently large; i.e., the boundary values will not be seen in an interior region $K_{L}$ (where the averaging takes place) if $R > L +\sqrt{\norm{a}_{\infty}} T$. Hence, size of the computational domain should increase linearly with respect to the wave speed $\sqrt{\norm{a}_{\infty}}$, which increases also the computational cost. Moreover, solving a wave equation is computationally more challenging than solving an elliptic PDE since an accurate discretization requires a more refined resolution per wave-length, implying a more refined stepsize for temporal discretization due to the presence of CFL condition in typical time-stepping methods for the wave equation such as the leap frog scheme.  

The goal of this paper is to provide a rigorous analysis of yet another approach, announced in Ref. \refcite{AAP19a}, based on parabolic cell problems which results in arbitrarily high convergence rates in $1/R$. The parabolic approach adopted here is inspired by Ref. \refcite{Mou19} \add{(although the idea of describing corrector functions by means of semigroup was also discussed in Ref. \refcite{GlO15})} and can be classified under category b), but with significant advantages from a computational point of view in comparison to the above mentioned strategies (see the discussions in the numerical results section). Moreover, this approach can be directly used in typical upscaling based multiscale formalisms such as the Heterogeneous Multiscale Methods (HMM) Ref. \refcite{Abd05b,AEE12,AbV11b,EE03}, and the equation free approaches Ref. \refcite{KGH03}, as well as Multiscale Finite Elements Methods (MsFEM) Ref. \refcite{HoW97,HWC99}, which are used to approximate either the homogenized solutions to \eqref{eq:model problem} or directly approximating the oscillatory response $u^{\varepsilon}$ in \eqref{eq:model problem}. 

The paper in structured as follows: in \Cref{sec:notation definition} we collect our notations and provide some definitions that will be used to present a new approximation scheme for the homogenized tensor. \corr{The main results of the present work are reported in \Cref{sec:main result}.} \Cref{sec:analysis} is devoted to the analysis of the modelling error, where \corr{arbitrary high} order convergence rates are proved. In \Cref{sec:numerical experiments}, numerical examples are given to verify our \corr{theoretical findings}. Finally, in \Cref{sec:computational cost} the computational cost of the parabolic method is analysed theoretically and compared to \corr{the classical elliptic scheme}.


\section{Notations and definitions}\label{sec:notation definition}
We will use the following notations throughout the exposition:
\begin{itemize}
\item The Sobolev space $W^{k,p}(\Omega)$ is defined as 
$$W^{k,p}(\Omega):= \{ f: D^{\gamma} f \in L^{p}(\Omega) \text{ for all multi-index } \gamma \text{ with } |\gamma| \leq k\}.$$
The norm of a function $f \in W^{k,p}(\Omega)$ is given by 
\begin{align*}
\| f \|_{W^{k,p}(\Omega)} := \begin{cases} \left( \sum_{|\gamma| \leq k} \int_{\Omega} |D^{\gamma} f(\bx)|^{p} \; d\bx \right)^{1/p} & (1 \leq p < \infty) \\
\sum_{|\gamma| \leq k} \text{ess sup}_{\Omega} |D^{\gamma} f| & (p=\infty).
\end{cases}
\end{align*}
\item \add{ The space $W^{1,p}_0(\Omega)$ is the closure in the $W^{1,p}$-norm of $C_c^{\infty}(\Omega)$, the space of infinitely differentiable functions with compact support in $\Omega$. The norm associated with $W^{1,p}_0(\Omega)$ is 
	\begin{equation*}
		\| f \|^p_{W^{1,p}_0(\Omega)} := \| f \|^p_{L^p(\Omega)} + \| \nabla  f \|^p_{L^p(\Omega)} .
\end{equation*}}
\item \add{The space $W^{1,2}_0(\Omega)$ is also denoted as $H^1_0(\Omega)$. 
By making use of the Poincar\'e inequality, the norm
\begin{equation*}
	\| f \|_{H_{0}^1(\Omega)} := \| \nabla  f \|_{L^2(\Omega)}
\end{equation*}
is equivalent to $\| f \|^p_{W^{1,2}_0(\Omega)}$.
We will sometimes use this form for the $H_{0}^1$-norm.}
\item We use the notation $\langle f, g \rangle_{L^2(\Omega)} := \int_{\Omega} f g \; d\bx$ to denote the $L^2$ inner product over $\Omega$.  
\item The space $H_{{div}}$ is 
$$H_{{div}}(\Omega) := \{f: f \in [L^2(\Omega)]^{d} \text{ and } \nabla \cdot f \in L^{2}(\Omega)\}.$$
The norm associated with $H_{{div}}$ is 
\begin{align*}
\| f \|_{H_{div}(\Omega)}^{2} := \| f \|_{L^2(\Omega)}^{2} +  \| \nabla \cdot f \|_{L^2(\Omega)}^{2}. 
\end{align*}
\item The space $W^1_{per}(K)$ is defined as the closure of
	\begin{equation*}
		\left\lbrace f\in C^{\infty}_{per}(K) : \int_{K} f \,d\bx=0\right\rbrace
	\end{equation*}
	for the $H^1$-norm. Thanks to the Poincar\'e-Wirtinger inequality, an equivalent norm in $W^1_{per}(K)$ is
	\begin{equation*}
		\norm{f}_{W^1_{per(K)}} = \norm{\nabla f}_{L^2(K)}.
	\end{equation*}

\item The space $L^2_0(K)$ is defined as 
\begin{equation*}
L^2_0(K) = \left\lbrace f\in L^2(K) : \int_{K} f \,d\bx=0\right\rbrace.
\end{equation*}
It is an Hilbert space with respect to the $L^2$-inner product.
\item Let $f$ belong to the Bochner space $ L^{p}(0,T; X)$, where $X$ is a Banach space. Then the norm associated with this space is defined as 
$$
\| f \|_{L^{p}(0,T; X)}:= \left( \int_{0}^{T} \|  f \|_{X}^{p} \; dt \right)^{\frac{1}{p}}. 
$$
\item Cubes in $\mathbb{R}^{d}$ are denoted by $K_{L}:=(-L/2,L/2)^{d}$. In particular, $K$ is the unit cube $(-1/2,1/2)^{d}$. 
\item By writing $C$, we mean a generic constant independent of $R,L,T$ which may change in every subsequent occurrence.  
\item Boldface letters are to distinguish functions in multi-dimensions, e.g., $f(\bx)$ is to mean a function of several variable ($\bx \in \mathbb{R}^d, d \geq 2$), while $f(x)$ will be a function of one variable ($ x \in \mathbb{R}$).  

\item We will use the notation $\fint_{ D } f(\bx) \; d\bx$ to denote the average $\frac{1}{|D|} \int_{D} f(\bx) \; d\bx$ over a domain $D$.
\end{itemize}

\add{An important ingredient of the proposed numerical homogenization method are $q$-th order filter functions introduced in Ref. \refcite{Glo11} and defined below.}
\begin{definition}[Definition 3.1 in Ref. \refcite{Glo11}]\label{Def_Filter} We say that a function $\mu: [-1/2,1/2] \to \mathbb{R}^{+}$ belongs to the space $\mathbb{K}^{q}$ with $q > 0$ if
\begin{itemize}
\item[i)] $\mu \in C^{q}([-1/2,1/2]) \cap W^{q+1,\infty}((-1/2,1/2))$
\item[ii)] $\int_{-1/2}^{1/2}  \mu(x) \; dx  =1, $
\item[iii)] $\mu^{k}(-1/2) = \mu^{k}(1/2) =  0$ for all $k \in \{ 0,\ldots,q-1 \}$.
\end{itemize}
In multi-dimensions a $q$-th order filter $\mu_{L}: K_{L} \to \mathbb{R}^{+}$ with $L>0$ is defined by  
\begin{align*}
\mu_L(\bx) := L^{-d} \prod_{i=1}^{d} \mu\left(\frac{x_i}{L}\right),
\end{align*}
where $\mu$ is a one dimensional $q$-th order filter and $\bx = (x_1,x_2,\ldots,x_d) \in \mathbb{R}^d$. In this case, we will say that $\mu_L \in \mathbb{K}^{q}(K_L)$. Note that filters $\mu_L$ are considered extended to 0 outside of $K_L$.
\end{definition}

Filters have the property of approximating the average of periodic functions with arbitrary rate of accuracy, as stated in the following lemma (see Ref. \refcite{Glo11} for a proof).
	\begin{lemma}[Lemma 3.1 in Ref. \refcite{Glo11}]\label{lemma: kernel}
		Let $\mu_L \in \mathbb{K}^{q}(K_L)$. Then, for any $K$-periodic function $f\in L^p(K)$ with $1 < p\le 2$, we have 
		\begin{equation*} 
		\abs{ \int_{K_L}  f(\bx) \mu_L(\bx) \, d\bx - \fint_K f(\bx) \,d\bx } \le C \norm{f}_{L^{p}(K)} L^{-(q+1)},
		\end{equation*}
		where $C$ is a constant independent of $L$.
	\end{lemma}
	\begin{remark}
		\add{The constant $C$ in the above \Cref{lemma: kernel} depends on the Lipschitz constant of $\mu^{(q)}$.}
		The result of \cref{lemma: kernel} was proved in Ref. \refcite{Glo11} for $K$-periodic $f\in L^2(K)$ and, then, extended to the case $f\in L^p(K)$, $1<p<2$.
	\end{remark}


\begin{definition}\label{Def_MSpace} We say that $a \in \mathcal{M}(\alpha,\beta,\Omega)$ if $a_{ij} = a_{ji}$, $a \in [L^{\infty}(\Omega)]^{d\times d}$ and there are constants $0<\alpha \leq \beta$ such that
\begin{align*}
\alpha |\zeta|^2 \leq \zeta\cdot a(\bx) \zeta  \leq \beta |\zeta|^2, \quad \text{ for a.e.} \quad \bx \in \Omega, \forall   \zeta \in \mathbb{R}^d.
\end{align*} 
We write $a \in \mathcal{M}_{\text{per}}(\alpha,\beta,\Omega)$ if in addition $a$ is a $\Omega$-periodic function. 
\end{definition}

Throughout the exposition, we assume that $u^i$ and $v^i$, $i=1,\dots,d$, are the solutions of the following problems:
\begin{equation} \label{eq:parabolic dirichlet problem}
\left\lbrace
\begin{aligned}
& \frac{\partial u^i}{\partial t} - \nabla\cdot(a(\bx)\nabla u^i) = 0 & \quad &\text{in } K_{R}\times(0,+\infty)\\
& u^i = 0 &  \quad &\text{on }  \partial K_{R} \times (0,+\infty)\\
& u^i(\bx,0) = \nabla \cdot (a(\bx)\mathbf{e}_i) & \quad &\text{in } K_{R},
\end{aligned}
\right. 
\end{equation}
and
\begin{equation} \label{eq:parabolic periodic problem}
\left\lbrace
\begin{aligned}
& \frac{\partial v^i}{\partial t} - \nabla\cdot(a(\bx)\nabla v^i) = 0 & \quad & \text{in } K\times(0,+\infty)\\
& v^i(\cdot,t)  \quad K\text{-periodic},\forall t\ge 0\\
& v^i(\bx,0) = \nabla\cdot (a(\bx)\mathbf{e}_i) &  \quad &\text{in } K.
\end{aligned}
\right. 
\end{equation}	

The well-posedness of \cref{eq:parabolic dirichlet problem,eq:parabolic periodic problem} are well-known (see, e.g., Ref. \refcite{LiM68}), and are summarized below.
\begin{proposition}\label{prop: well-posedness}
	Let $a\in \mathcal{M}(\alpha,\beta,K_R)$ and $a(\bx)\mathbf{e}_i\in H_{\text{div}}(K_R)$. Then, \cref{eq:parabolic dirichlet problem} has a unique weak solution $u^i$ such that 
	\begin{equation*}
	u^i \in L^2([0,+\infty), H^1_{0}(K_R)), \partial_t u^i \in L^2([0,+\infty), H^{-1}(K_R)).
	\end{equation*}
	It follows that $u^i \in C([0,+\infty), L^2(K_R))$, and there exists a constant $C>0$ such that the following bound holds true:
	\begin{equation*}
	\norm{u^i}_{L^{\infty}\left([0,+\infty),L^2(K_R)\right)} + \norm{u^i}_{L^2\left( [0,+\infty), H^1_{0}(K_R)\right)} \le C \norm{\nabla\cdot (a(\bx)\mathbf{e}_i)}_{L^2(K_R)}.
	\end{equation*}
\end{proposition}
\begin{proposition}\label{prop: well-posedness per}
	Let $a\in \mathcal{M}_{per}(\alpha,\beta,K)$ and $a(\bx)\mathbf{e}_i\in H_{\text{div}}(K_R)$. Then, \cref{eq:parabolic periodic problem} has a unique weak solution $v^i$ such that 
	\begin{equation*}
	v^i \in L^2([0,+\infty), W^1_{per}(K)), \partial_t v^i \in L^2([0,+\infty), W^1_{per}(K)').
	\end{equation*}
	It follows that $v^i\in  C([0,+\infty), L_0^2(K))$, and there exist constants $C>0$ such that the following bounds hold true:
	\begin{equation*}
	\norm{v^i}_{L^{\infty}\left([0,+\infty),L^2(K)\right)} + \norm{v^i}_{L^2\left( [0,+\infty), W^1_{per}(K)\right)} \le C \norm{\nabla\cdot (a(\bx)\mathbf{e}_i)}_{L^2(K)}. 
	\end{equation*}
\end{proposition}
Here, the space $W^1_{per} (K)'$ is the dual space of $W^1_{per} (K)$ (a characterization of this space can be found in Ref. \refcite{CiD99}). 
With a slight abuse of notation, in the coming sections the functions $v^i$ will indicate both the solution of \cref{eq:parabolic periodic problem} on the cell $K$ and its periodic extension to the whole $\R^d$.
Finally, we define the bilinear form $B:  W^1_{per}(K) \times W^1_{per}(K) \mapsto \R $ through the formula
\begin{equation}\label{eq: bilinear form}
B[u, v ] = \int_{K}\nabla u \cdot a(\bx) \nabla v \,d\bx.
\end{equation}

If $a\in \mathcal{M}_{\text{per}}(\alpha,\beta,K)$, the bilinear form $B[\cdot,\cdot]$ is continuous and coercive and there exists a non-decreasing sequence of strictly positive eigenvalues $\left\{\lambda_j \right\}_{j=0}^{\infty}$ and a $L^2$-orthonormal set of eigenfunctions $\left\{\varphi_j \right\}_{j=0}^{\infty}\subset W^1_{per}(K)$ such that 
\begin{equation}\label{eq: eigenproblem}
B[\varphi_j,w] = \lambda_j \langle \varphi_j,w\rangle_{L^2(K)}, \quad \forall w\in W^1_{per}(K).
\end{equation}


\section{Main results}\label{sec:main result}
The starting point of the analysis is the following new formula for the approximation of the homogenized coefficient $a^{0}$ in \eqref{eq:homogenized elliptic}
\begin{equation}
	\label{eq: a0 parabolic dirichlet}
	\mathbf{e}_i  a^{0}_{R,L,T} \mathbf{e}_j = \int_{K_{L}} \mathbf{e}_i \cdot a(\bx) \mathbf{e}_j \mu_L(\bx)\,d\bx - 2\int_{0}^{T} \int_{ K_{L}} u^i(\bx,t) u^j(\bx,t) \mu_L(\bx) \,d\bx  \,dt,
\end{equation}
where $\{ u^{i} \}_{i=1}^{d}$ are the solutions of the parabolic problems \eqref{eq:parabolic dirichlet problem}. Note that the parabolic solutions $\{ u^{i} \}_{i=1}^{d}$ are solved over $K_R$, from which it follows the dependency of $a^{0}_{R,L,T}$ on the parameter $R$, while the averaging is taking place over the domain $K_L \subset K_R$. The aim of this section is two-fold: first, in \Cref{sec: equivalence}, we will recall a result which is proved in Ref. \refcite{AAP19a}, where the equivalence between the approximate homogenized coefficient \eqref{eq: a0 parabolic dirichlet} (when $T = \infty$\add{, $L=R$ and the constant filter $\mu_L=L^{-d}$ is adopted}), and the approximation \eqref{eq:bar_a0_Intro} based on elliptic cell problems  (when $\chi_R^{i}$ is supplied with homogeneous Dirichlet BCs) is shown. Next, in \Cref{sec: filter intro}, we will present our main statement in \cref{thm: modelling error parabolic correctors filter}, which states that if $T$ \add{and $L$} are chosen optimally, then we obtain arbitrarily high convergence rates for the difference between $a^{0}_{R,L,T}$ in \eqref{eq: a0 parabolic dirichlet} and the exact homogenized coefficient $a^{0}$ in \eqref{eq:a0}, when $a \in \mathcal{M}_{\text{per}}(\alpha,\beta,K)$.

\add{The periodicity of the coefficient $a$ is assumed only to simplify the theory. The method itself can be easily generalized to cases when the homogenized coefficient $a^{0}$ is not a constant by shifting the local spatial domains in \eqref{eq: a0 parabolic dirichlet}, as well as the parabolic problem \eqref{eq:parabolic dirichlet problem} to $\bx  + K_R$, where $\bx$ is a point at which the homogenized coefficient $a^{0}_R$ needs to be computed. One typical example is when the coefficient $a^{\varepsilon}(\bx)$ is locally periodic and has both fast and slow variations. In this case, it is even possible to generalize the analysis by considering a local Taylor's expansion of the parabolic solutions \eqref{eq: a0 parabolic dirichlet} around the slow variable following the ideas in Ref. \refcite{ArR16}.
}
\subsection{Equivalence between the standard and parabolic homogenized coefficients}
\label{sec: equivalence}
Assume that the elliptic solutions $\chi^{i}_{R}$ in \eqref{eq: microproblem_Intro} are supplied either with periodic or homogeneous Dirichlet BCs. By symmetry of $a(\bx)$, we can rewrite \cref{eq:bar_a0_Intro} as:
	\begin{equation}\label{eq: a0 ell symmetric KR}
	\mathbf{e}_i \cdot a^{0}_{R}  \mathbf{e}_j =  \fint_{  K_{R } } \mathbf{e}_i \cdot a (\bx) \mathbf{e}_j \,d\bx -  \fint_{  K_{R } } \nabla \chi_{R}^i ( \bx ) \cdot a (\bx)  \nabla \chi_{R}^j ( \bx ) \,d\bx.
	\end{equation}
\Cref{thm:weak form convergence} provides an alternative expression for the second integral, which will be referred to as the \emph{correction part} of the homogenized tensor, based on the use of parabolic problems over infinite time domain. We refer to Ref. \refcite{AAP19a} for a rigorous proof.
	
	\begin{theorem}\label{thm:weak form convergence}
		Let $a(\bx)\in \mathcal{M}(\alpha,\beta,K_R)$, $u^i \in C([0,+\infty), L^2(K_R))$ be the weak solution of \cref{eq:parabolic dirichlet problem} and $\chi^i_R \in H^1_0( K_{R} )$ be the weak solution of \cref{eq: microproblem_Intro}. Then, for $1\le i,j \le d$, the following identities hold

		\begin{gather}
		\label{eq:result1 proposition}
		\chi_{R}^i = \int_{0}^{+\infty} u^i(\cdot,t) \, dt \quad \text{ in } H^1_0 ( K_{R} ) ,\\
		\label{eq:result2 proposition}
		\frac{1}{2} \int_{K_{R}} \nabla \chi^i_R(\bx) \cdot a(\bx) \nabla \chi^j_R(\bx)  d\bx = \int_{0}^{+\infty} \int_{ K_{R}}u^i(\bx,t) u^j(\bx,t) d\bx\,dt. 
		\end{gather}
	\end{theorem}	

\Cref{thm:weak form convergence} implies that if $T = \infty$ (and $\mu_L = L^{-d} \text{ in } K_L$ with $R=L$) in \eqref{eq: a0 parabolic dirichlet}, then the parabolic formulation does not lead to any gain in the first order convergence rate in \eqref{eq:ModellingError} due to the equivalence relation above. It is important to notice that we do not need the periodicity assumption on the tensor $a$ for deriving the equivalence. Moreover, the same result holds true if we substitute the homogeneous Dirichlet condition with the periodic boundary conditions, under the periodicity assumption for the tensor $a$. Then we have the following corollary:
	\begin{corollary}
		Let $a(\bx)\in \mathcal{M}_{per}(\alpha,\beta,K)$. Let $v^i \in C([0,+\infty), L_{per}^2(K))$ solve \cref{eq:parabolic periodic problem}. Then  
		\begin{equation}
		\label{eq:new a0 periodic}
		 \mathbf{e}_i a^0 \mathbf{e}_j = \fint_{K} \mathbf{e}_i \cdot a(\bx) \mathbf{e}_j \,d\bx - 2 \int_{0}^{+\infty} \fint_{K} v^i(\bx,t) v^j(\bx,t) \,d\bx\,dt.
		\end{equation}
	\end{corollary}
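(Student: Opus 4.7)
The proof mirrors that of \cref{thm:weak form convergence} but is actually simpler, because under the periodicity of $a$ there is no resonance error: formula \eqref{eq:a0} already yields the exact homogenized tensor, and the symmetric rewriting in \eqref{eq: a0 ell symmetric KR} specialises to
\begin{equation*}
\mathbf{e}_i\cdot a^0\mathbf{e}_j \;=\; \fint_K \mathbf{e}_i\cdot a(\bx)\mathbf{e}_j\,d\bx \;-\; \fint_K \nabla\chi^i(\bx)\cdot a(\bx)\nabla\chi^j(\bx)\,d\bx,
\end{equation*}
so it suffices to show that the corrector integral on the right equals $2\int_0^{+\infty}\fint_K v^iv^j\,d\bx\,dt$.

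To set up the identity I would introduce the time primitive $w^i$ of $v^i$, namely the solution of the non-stationary periodic cell problem
\begin{equation*}
\partial_t w^i - \nabla\cdot(a(\bx)\nabla w^i) = \nabla\cdot(a(\bx)\mathbf{e}_i),\qquad w^i(\cdot,0)=0,
\end{equation*}
with $K$-periodic boundary conditions, so that $v^i=\partial_t w^i$ solves \cref{eq:parabolic periodic problem}. Both the source and the initial datum have zero mean on $K$, hence so does $w^i(\cdot,t)$ for every $t\ge 0$; the periodic Poincaré inequality on mean-zero functions then provides a spectral gap which ensures well-posedness, exponential-in-time decay of $\norm{v^i(\cdot,t)}_{L^2(K)}$ (hence integrability in time of $\fint_K v^iv^j\,d\bx$), and the convergence $w^i(\cdot,t)\to\chi^i$ in $H^1_{per}(K)$ as $t\to+\infty$, provided $\chi^i$ is normalised to have zero mean.

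The key computation is a single integration by parts in space, which under periodicity produces no boundary terms. Using the identity $\nabla\cdot(a\nabla(w^j-\chi^j))=\partial_t w^j=v^j$, obtained by subtracting \eqref{eq:periodic micro} from the parabolic equation, together with the symmetry of $a$, one finds
\begin{equation*}
\int_K v^iv^j\,d\bx = -\int_K a(\bx)\nabla(w^j-\chi^j)\cdot\partial_t\nabla(w^i-\chi^i)\,d\bx = -\tfrac{1}{2}\tfrac{d}{dt}\int_K a(\bx)\nabla(w^i-\chi^i)\cdot\nabla(w^j-\chi^j)\,d\bx.
\end{equation*}
Integrating over $t\in(0,+\infty)$ and using the endpoint values $w^i(\cdot,0)=0$ and $w^i(\cdot,t)\to\chi^i$ yields $2\int_0^{+\infty}\fint_K v^iv^j\,d\bx\,dt = \fint_K \nabla\chi^i\cdot a\,\nabla\chi^j\,d\bx$, which inserted in the symmetric formula above gives exactly \eqref{eq:new a0 periodic}.

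The main obstacle is the rigorous justification of the long-time limit $w^i(\cdot,t)\to\chi^i$ and of the differentiation under the integral sign in the energy identity; both reduce to standard decay estimates for the semigroup generated by $-\nabla\cdot(a\nabla\cdot)$ on the closed subspace of $L^2_{per}(K)$ of mean-zero functions, on which the operator is coercive by the periodic Poincaré inequality. Compared to the Dirichlet argument of \cref{thm:weak form convergence}, no interior weight $\mu_L$ nor any cut-off is required, so the periodic proof is really a streamlined version of the same template.
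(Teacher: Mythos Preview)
Your proposal is correct and follows exactly the template that underlies \cref{thm:weak form convergence}: introduce the time-integrated corrector $w^i$, identify $v^i=\partial_t w^i=\nabla\cdot(a\nabla(w^i-\chi^i))$, and turn $\int_K v^iv^j$ into a total time derivative of the energy by the symmetry of $a$ and periodic integration by parts. The paper itself does not spell out a separate argument for the corollary; it simply remarks that the equivalence of \cref{thm:weak form convergence} carries over verbatim when homogeneous Dirichlet conditions are replaced by periodic ones, which is precisely what you have written out in the cleaner periodic setting (no weight $\mu_L$, no cut-off, spectral gap from the periodic Poincar\'e inequality).
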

	
\add{%
	\begin{remark}
		Identities \cref{eq: a0 ell symmetric KR,eq:result2 proposition,eq:new a0 periodic} are based on the assumption of symmetry for the matrix $a(\cdot)$. By consequence, \cref{eq: a0 parabolic dirichlet} should be modified for problems with non-symmetric coefficients.
	\end{remark}
}
\subsection{Main result: exponential decay of the resonance error} \label{sec: filter intro}
\newcommand{\nacon}{\ensuremath{\nu}}
As stated in the \Cref{sec: equivalence}, the consequence of the equivalence between the parabolic model and the standard elliptic model is that the first order convergence rate of the resonance error in \eqref{eq:ModellingError} remains unchanged. In this subsection, we summarize our main result which states that we are able to achieve arbitrarily high convergence rates for the resonance error
	 \begin{equation*}
	 e_{MOD} :=  \norm{  a^{0}_{R,L,T} - a^0 }_{F},
	 \end{equation*} 	
upon choosing the parameters $T$ and $L$ optimally. \add{Our basic assumptions, for our main theorem, are the following:
\begin{subequations}
	%
	\label{eq: hp on a} 
	\begin{equation}
		\label{eq: hp on a standard}
		a(\cdot) \in \mathcal{M}_{\text{per}}(\alpha,\beta,K), 
	\end{equation}	
	\begin{equation}
		\label{eq: hp on a regularity}
		a(\cdot)\mathbf{e}_i \in W^{1,\infty}(K_R) ,\quad\text{for }i = 1,\dots,d, 
	\end{equation}	
	\begin{equation}
		\label{eq: hp on v regularity}
		v^i\in L^p\left((0,T), W^{1,p}_{per}(K)\right),\quad\text{for }i = 1,\dots,d,
	\end{equation}	
\end{subequations}
with $p>p_0=\max\left\lbrace\frac{d+2}{2},2\right\rbrace$. 
\begin{remark}
Let us briefly comment on assumption \eqref{eq: hp on v regularity}. 
This assumption is needed to have sufficient regularity of a boundary layer function defined in \eqref{eq:theta2}. One can notice that, for $d=1$ the classical estimate of $v^i\in L^2\left((0,T), W^{1,2}_{per}(K)\right)$ is sufficient, for $d=2$ we need $p>2$ and for $d=3$ we need $p>5/2$. The last two conditions are only slightly more restrictive than what we already know on $v^i$, namely $v^i\in L^2\left((0,T), W^{1,2}_{per}(K)\right)$. Finally, such $L^p$ estimates have been proved (for parabolic problems with homogeneous Dirichlet BCs, as in \eqref{eq:parabolic dirichlet problem}) in Chapter 2, Theorem 2.5 Ref. \refcite{BLP78} under appropriate assumptions on $a(\cdot)$, but without an explicit characterization of $p>2$.\footnote{The assumption $a(\cdot)\in W^{2,p}(K_R)$ is sufficient but might not be necessary, see \refcite{BLP78}.}
\end{remark}
}

\begin{theorem}\label{thm: modelling error parabolic correctors filter}
	 Under assumptions \cref{eq: hp on a}, let $K_R \subset \R^d$ for $R\ge 1 $. 
	 Let $a^{0}_{R,L,T}$ and $a^0$ be defined, respectively, as in \cref{eq: a0 parabolic dirichlet} and \cref{eq:a0}, with $u^i$ satisfying \cref{eq:parabolic dirichlet problem} for any $i=1,\dots,d$ and $\mu_{L} \in \mathbb{K}^{q}(K_{L})$, with $0<L<R-2$.
	 \add{There exist constants $\nu(\beta,d)>0$, $0<c \le\nu$, $\lambda_0(\alpha,d)>0$, $C(\alpha,\beta,d)>0$ and $\tilde{C}_{2,c}(\alpha,\beta,d)>0$ such that, if $T<\frac{2\nacon}{d}\abs{\tilde{R}-L}^2$ and $T<\tilde{C}_{2,c}\abs{R-L}$, then}
	 \add{ %
	  \begin{equation}\label{eq:final result}
	 	\norm{ a^{0}_{R,L,T} - a^0 }_{F} \le C \Bigg[
	 		L^{-(q + 1)} + e^{-2\lambda_0 T} + 
	 		\frac{R^{d-1}}{T^{d/2}} e^{-c\frac{\abs{R-L}^2}{T}} 
	 	+\frac{R^{2(d-1)}}{T^{d}} e^{-2c\frac{\abs{R-L}^2}{T}}
	 		\Bigg].
	 \end{equation}}
 %
%
	 The choice 
	 	$$L = k_o R, \quad \add{T = k_T (R-L)},$$	 
	 with $0<k_o<1$ and 
	 $\add{k_T = \sqrt{\frac{c}{2\lambda_0}} }$ results in the following convergence rate in terms of $R$
	 \begin{equation}\label{eq:final result optim}
	 	\norm{ a^{0}_{R,L,T} - a^0 }_{F} \le C \left[R^{-(q + 1)} + e^{-\sqrt{2\lambda_0c}(1-k_o)R} \right],
	 \end{equation}
	 for a constant $C>0$ independent of $R$, $L$ or $T$.
\end{theorem}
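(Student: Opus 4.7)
The plan is to decompose the global error $a^{0}_{R,L,T}-a^0$ into three qualitatively different contributions, estimate each one separately, and then optimize the free parameters $L$ and $T$. Writing schematically
\begin{equation*}
a^{0}_{R,L,T}-a^0 = \underbrace{\bigl(a^{0}_{R,L,T}-a^{0}_{R,L,\infty}\bigr)}_{\text{time-truncation}} + \underbrace{\bigl(a^{0}_{R,L,\infty}-a^{0}_{\infty,L,\infty}\bigr)}_{\text{boundary (spatial) error}} + \underbrace{\bigl(a^{0}_{\infty,L,\infty}-a^0\bigr)}_{\text{filter error}},
\end{equation*}
we reduce the proof to three independent estimates, each of which is controlled by one of the four summands in \eqref{eq:final result}.

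\textbf{Time truncation ($e^{-2\lambda_0 T}$).} Extending the time integration from $[0,T]$ to $[0,\infty)$ in \eqref{eq: a0 parabolic dirichlet} amounts to bounding $\int_T^\infty \int_{K_L} \mu_L\, u^i u^j\,d\bx\,dt$. First, I would use the semigroup representation of the parabolic solution on $K_R$ with homogeneous Dirichlet BCs: the Poincaré inequality on $K_R$ provides a spectral gap $\lambda_0=\lambda_0(\alpha,d)>0$ (independent of $R\ge 1$ for the Dirichlet Laplacian rescaling, or uniform in $R$ by using the bulk spectral gap), so that $\norm{u^i(\cdot,t)}_{L^2(K_R)}\le C e^{-\lambda_0 t}\norm{u^i(\cdot,0)}_{L^2}$. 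Squaring, integrating in $t$, and using that $\norm{\mu_L}_{L^\infty}\le C L^{-d}$ yields the $e^{-2\lambda_0 T}$ term. This step is essentially routine energy/semigroup estimates.

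\textbf{Boundary error (the two Gaussian summands).} This is the technical heart of the proof. The difference between the parabolic problem posed on the truncated domain $K_R$ and the one posed on the whole space (or on a much larger cell) can be expressed through a boundary layer correction $\theta$ solving a parabolic equation driven by a source supported near $\partial K_R$, of the type announced as \eqref{eq:theta2}. I would represent $\theta$ through the parabolic fundamental solution $G(\bx,\by,t)$ associated with the operator $\partial_t-\nabla\cdot(a(\bx)\nabla\cdot)$ and invoke the Aronson Gaussian upper bound
\begin{equation*}
\abs{G(\bx,\by,t)} \le \frac{C}{t^{d/2}}\exp\!\Bigl(-c\frac{\abs{\bx-\by}^2}{t}\Bigr),
\end{equation*}
valid for uniformly elliptic symmetric coefficients. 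Since the supporting region of $\mu_L$ is contained in $K_L$ and the boundary layer source lives in a neighbourhood of $\partial K_R$, the distance between them is bounded below by $\abs{R-L}$, so integration of the Aronson bound over $K_L\times\partial K_R\times(0,T)$ produces $\tfrac{R^{d-1}}{T^{d/2}}e^{-c\abs{R-L}^2/T}$. The quadratic Gaussian summand $\tfrac{R^{2(d-1)}}{T^d}e^{-2c\abs{R-L}^2/T}$ arises from the \emph{product} structure $u^i u^j$ in the correction part: when $u^i$ and $u^j$ are both replaced by their boundary-layer corrections, one picks up a product of two Gaussians. The hypothesis $v^i\in L^p((0,T),W^{1,p}_{per}(K))$ with $p>\max\{(d+2)/2,2\}$ is used precisely here to control the $L^p$ norm of the boundary source by Hölder duality against $L^{p'}$ norms of the Gaussian kernel, yielding a finite integral for $d\le 3$. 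The conditions $T<\tfrac{2\nu}{d}\abs{\tilde R-L}^2$ and $T<\tilde C_{2,c}\abs{R-L}$ are technical thresholds ensuring that the Aronson estimate is in its Gaussian regime and that the exponential dominates the algebraic prefactor in $R$.

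\textbf{Filter error ($L^{-(q+1)}$).} By the equivalence \Cref{thm:weak form convergence} applied on an unbounded (or sufficiently large) domain, $a^{0}_{\infty,L,\infty}-a^0$ is an averaging error in which the periodic integrand $a(\bx)(\mathbf{e}_j+\nabla\chi^j)$ against the kernel $\mu_L\in\mathbb{K}^q(K_L)$ replaces the periodic mean. Using the defining vanishing-moment property of $\mathbb{K}^q$ (for polynomials up to degree $q$) together with Poisson summation or a classical Bruno--Poisson argument on periodic functions, this discrepancy is of order $L^{-(q+1)}$. This step is independent of $T$ and $R$.

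\textbf{Optimization.} Adding the three contributions gives \eqref{eq:final result}. To extract \eqref{eq:final result optim}, I balance the spatial and temporal exponentials by equating $2\lambda_0 T = c\abs{R-L}^2/T$, i.e.\ $T=k_T\abs{R-L}$ with $k_T=\sqrt{c/(2\lambda_0)}$; the algebraic prefactors $R^{d-1}T^{-d/2}$ and $R^{2(d-1)}T^{-d}$ are then absorbed into the exponential by a marginal decrease of the exponent (using any $c'<c$), and the spatial term becomes $e^{-\sqrt{2\lambda_0 c}(1-k_o)R}$ upon setting $L=k_o R$. Finally, $L^{-(q+1)}=k_o^{-(q+1)}R^{-(q+1)}$, which dominates the exponential for any fixed $q$, producing the stated bound. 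I expect the main obstacle to be the rigorous derivation of the Gaussian boundary-layer estimate in step two: one has to carefully set up the boundary layer $\theta$, justify the Aronson bound for the coefficient $a(\cdot)$ on $K_R$ (with its specific BCs), and combine it with the $L^p$ regularity of $v^i$ to obtain the precise prefactors $R^{d-1}T^{-d/2}$ without losing the sharp exponential rate.
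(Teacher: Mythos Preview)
Your proposal has the right ingredients (Aronson bounds, filter moment conditions, spectral gap) but the decomposition you chose does not work, and the flaw is structural rather than cosmetic.

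\textbf{The time-truncation term.} You propose to bound $\int_T^\infty\int_{K_L}\mu_L\,u^iu^j$ using a spectral gap $\lambda_0$ for the Dirichlet problem on $K_R$, claimed to be independent of $R$. This is false: the first Dirichlet eigenvalue on $K_R$ scales like $\alpha\pi^2/R^2$, so your estimate would only give $e^{-cT/R^2}$, which is useless for $T\sim R$. The paper avoids this by never truncating the Dirichlet solution $u^i$. Instead it introduces the \emph{periodic} solution $v^i$ on the unit cell $K$ (equation \eqref{eq:parabolic periodic problem}), for which the spectral gap $\lambda_0$ is genuinely a fixed constant, and the truncation error $I^4$ is $\int_T^\infty\fint_K v^iv^j$.

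\textbf{The boundary term.} In your scheme the boundary error is $a^0_{R,L,\infty}-a^0_{\infty,L,\infty}$, i.e.\ both quantities are integrated to $T=\infty$. By the equivalence \Cref{thm:weak form convergence}, $a^0_{R,L,\infty}$ coincides with the filtered elliptic upscaled coefficient on $K_R$, so this difference is precisely the classical resonance error, which is only $O(1/R)$. You then write that you would ``integrate the Aronson bound over $K_L\times\partial K_R\times(0,T)$'', but in your own decomposition this term carries $T=\infty$, and $\int_0^\infty t^{-d/2}e^{-c\abs{R-L}^2/t}\,dt$ does not decay exponentially in $R-L$. The paper instead defines the boundary term $I^2$ as the difference between $u^iu^j$ and $v^iv^j$ on the \emph{finite} interval $[0,T]$; only then does the Aronson bound on the boundary-layer function $\theta^i=u^i-\rho v^i$ produce the Gaussian factor $T^{-d/2}e^{-c\abs{R-L}^2/T}$.

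In short, the key idea you are missing is to route the comparison through the periodic reference $v^i$: the four-term decomposition \eqref{eq:decomposition} places the finite-$T$ cutoff on the boundary term (where Aronson is effective) and the infinite-time tail on the periodic term (where the spectral gap is $R$-independent). Your three-term splitting puts each of these tools in the place where it fails. The filter and optimization parts of your plan are essentially correct and match the paper.
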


	
\add{\subsubsection{Choice of the parameters}\label{subsubsec: choice parameters}
The choice of $T$ has a direct influence on the decay of the exponential part of the error. If $k_T$ is too small then the first exponential term in \cref{eq:final result} will be dominant, while if $k_T$ is too large then the last two exponentials will dominate the error bound (provided that $R$ and $L$ are not extremely large).
The optimal value of $k_T$ depends on $\lambda_0$ and $c$, but their values are not available in most of the cases.	
A practical choice of $k_T$ can be made by using approximate values for $c$ and $\lambda_0$. 
The constant $c$, which appears in the last two exponential terms in \eqref{eq:final result}, can be taken arbitrarily close to the constant $\nacon$ in the exponent of the Nash--Aronson estimate \eqref{eq:NashAronson}, where the constant $\nu$ depends only on the dimension $d$ and the constant $\beta$ in \Cref{Def_MSpace} (see \Cref{remark: bound for nu}).
For $\lambda_0$, which is the smallest eigenvalue of the bilinear form \eqref{eq: bilinear form}, the following approximation hold: ${\lambda_0 \approx \frac{\pi^2 \alpha}{diam(K)^2} \approx \frac{\pi^2}{d} \inf_{x\in\R^d, i,j}\abs{a_{ij}(x)}}$ (see \Cref{remark: value lambda0}).
As a consequence, the exponent in the exponential term $\sqrt{2\lambda_0c}\approx \sqrt{\alpha/\beta}$ will depend on the contrast ratio. 
}
Later in the analysis, it will be clear that the main idea of limiting $T = \mathcal{O}(R-L)$ is to exploit the mild dependence of the parabolic solutions $u^{i}$ on the boundary conditions, which is the case if parabolic solutions are evolved over a sufficiently short time. 
Second, the use of filtering functions $\mu_L$ is to achieve high order convergence rates for the averages of oscillatory functions, which is another essential component in achieving high order rates for the resonance error. 


\section{Error analysis} \label{sec:analysis}
In this section we prove the bound stated in \cref{thm: modelling error parabolic correctors filter}. The proof can be outlined as follows:
\begin{description}
	\item[Step 1:] We exploit the fact that the exact homogenized coefficient $a^{0}$ in \eqref{eq:a0} is equal to \eqref{eq:new a0 periodic}, and we decompose the error into four terms
	\begin{multline}\label{eq:decomposition}
\mathbf{e}_i	(a^{0}_{R,L,T} - a^0) \mathbf{e}_j= 
	\underbrace{
		\int_{K_{L}} \mathbf{e}_i \cdot a(\bx) \mathbf{e}_j \mu_{L}(\bx) \,d\bx
		- \fint_{K} \mathbf{e}_i \cdot a(\bx) \mathbf{e}_j \,d\bx 
	}_{I^1_{ij}}  \\
	\underbrace{ 
		+2 \int_{0}^{T} \int_{ K_{L}} v^i(\bx,t) v^j(\bx,t) \mu_{L}(\bx) d\bx\,dt
		- 2 \int_{0}^{T} \int_{ K_{L}} u^i(\bx,t) u^j(\bx,t) \mu_{L}(\bx) d\bx\,dt 
	}_{I^2_{ij}} \\
	\underbrace{ 
		+2 \int_{0}^{T} \fint_{K} v^i(\bx,t) v^j(\bx,t) d\bx\,dt
		- 2 \int_{0}^{T} \int_{ K_{L}} v^i(\bx,t) v^j(\bx,t) \mu_{L}(\bx) d\bx\,dt 
	}_{I^3_{ij}} \\
	\underbrace{ 
		+2 \int_{0}^{+\infty} \fint_{K} v^i(\bx,t) v^j(\bx,t) d\bx\,dt
		- 2\int_{0}^{T} \fint_{K} v^i(\bx,t) v^j(\bx,t) d\bx\,dt
	}_{I^4_{ij}} .
	\end{multline}
	\item[Step 2:] Estimation of the \emph{averaging} errors $I^1_{ij}$ and $I^3_{ij}$ by means of \cref{lemma: kernel}.
	\item[Step 3:] Estimation of the \emph{truncation} error $I^4_{ij}$ by means of the exponential decrease in time of $\norm{v^i(\cdot,t)}_{L^2(Y)}$.
	\item[Step 4:] Estimation of the \emph{boundary} error $I^2_{ij}$ by means of upper bounds for the fundamental solution of the parabolic problem and integration over finite time intervals $[0,T]$.
\end{description}
The coming subsections will be devoted to the derivation of upper bounds for $I^1_{ij}$, $I^2_{ij}$, $I^3_{ij}$ and $I^4_{ij}$. 
\subsection{Bounds for $I^1_{ij}$ and $I^3_{ij}$}
The two error terms studied in this subsection originate from the fact that we are approximating the averages of periodic functions by a weighted average over a bounded domain. For such a reason, these errors will be referred to as \emph{averaging} error for $a$ ($I^1_{ij}$) and for $v^iv^j$ ($I^3_{ij}$). The \cref{lemma: I_1} is a direct consequence of \cref{lemma: kernel}, and therefore the proof is omitted.

\begin{corollary}\label{lemma: I_1}
	Let $a \in \mathcal{M}_{per}(\alpha,\beta,K)$ be periodically extended over $K_L$. Then, there exists $C_1>0$, independent of $L$, such that
	\begin{equation*}
	\abs{I^1_{ij}} \le C_1 L^{-(q+1)}, \quad i,j = 1,\dots,d.
	\end{equation*}
\end{corollary}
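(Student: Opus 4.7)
The plan is to recognize that $I^1_{ij}$ is precisely an averaging error for a single $K$-periodic function tested against a filter of class $\mathbb{K}^q$, and therefore to invoke \Cref{lemma: kernel} directly. Define the scalar-valued function $g(\bx) := \mathbf{e}_i \cdot a(\bx) \mathbf{e}_j$. By \cref{eq: hp on a standard}, $g$ is $K$-periodic; by \cref{eq: continuity coercivity assumption}, it is bounded on $K$, with additional regularity provided by \cref{eq: hp on a regularity} if needed by the kernel lemma. In this notation
\begin{equation*}
I^1_{ij} = \int_{K_L} g(\bx)\, \mu_L(\bx)\,d\bx - \fint_K g(\bx)\,d\bx,
\end{equation*}
which is exactly the quantity controlled by the kernel lemma.

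First I would check that the hypotheses of \Cref{lemma: kernel} are met: namely, $g$ is periodic over $K$ and is extended periodically to $K_L$, and $\mu_L \in \mathbb{K}^q(K_L)$ so that its moment/vanishing conditions are valid up to order $q$. Then I would apply the lemma to obtain a constant $C_1>0$, depending on $\beta$, $d$, and $\|g\|$ on the unit cell but independent of $L$, such that the averaging error decays like $L^{-(q+1)}$. The constant absorbs the dependence on $i,j$ through $\max_{i,j}\|\mathbf{e}_i \cdot a\,\mathbf{e}_j\|_{L^\infty(K)} \le \beta$.

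Because this really is a one-line corollary, I do not expect any genuine obstacle; the only point requiring a moment of care is matching the regularity assumption of the kernel lemma with what is available for $a$. If the kernel lemma is proved by integration by parts (transferring derivatives onto the smooth filter $\mu_L$) then only boundedness of $g$ is needed and \cref{eq: continuity coercivity assumption} suffices; if instead the lemma is phrased for Lipschitz or more regular integrands, then \cref{eq: hp on a regularity} provides $W^{1,\infty}$ regularity of $a\mathbf{e}_i$, which combined with the symmetry of $a$ gives the same for $g$. Either way, the bound $|I^1_{ij}|\le C_1 L^{-(q+1)}$ follows without further work, which is why the authors omit the proof.
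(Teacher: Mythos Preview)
Your proposal is correct and matches the paper's approach exactly: the authors state that \Cref{lemma: I_1} is a direct consequence of \Cref{lemma: kernel} and omit the proof, which is precisely what you do by applying the kernel lemma to the $K$-periodic function $g(\bx) = \mathbf{e}_i \cdot a(\bx) \mathbf{e}_j$. Your discussion of the regularity needed is accurate and covers the relevant cases.
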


\begin{lemma}\label{lemma: I_3}
	Let $a(\cdot)$ satisfy assumption \eqref{eq: hp on a standard}, let $v^i\in L^2([0,+\infty),W^{1}_{per}(K))$ be the $K$-periodic solution of \eqref{eq:parabolic periodic problem} and $\mu_L\in\mathbb{K}^q(K_L)$. Then, there exists $C_3>0$, independent of $L$, such that
	\begin{equation*}
	\abs{I^3_{ij}} \le C_3 L^{-(q+1)}.
	\end{equation*}
\end{lemma}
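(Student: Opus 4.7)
The plan is to reduce the bound on $I^3_{ij}$ to a pointwise-in-time averaging estimate for $K$-periodic functions, and then control the time integral by exploiting the exponential decay of the parabolic solutions $v^i$.

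First, I would fix $t \in [0,T]$ and consider the $K$-periodic function
\begin{equation*}
  w^{ij}(\bx,t) := v^i(\bx,t)\, v^j(\bx,t),
\end{equation*}
which is periodic because $v^i,v^j$ are (periodically extended to $K_L$). The integrand of $I^3_{ij}$ then has exactly the structure
\begin{equation*}
  2\int_0^T \left( \fint_K w^{ij}(\bx,t)\,d\bx - \int_{K_L} w^{ij}(\bx,t)\,\mu_L(\bx)\,d\bx \right) dt,
\end{equation*}
so that Lemma \ref{lemma: kernel}, which is exactly the tool designed to quantify the error between the cell average and a $\mu_L$-weighted average of a $K$-periodic function, applies pointwise in $t$. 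This yields
\begin{equation*}
  |I^3_{ij}| \le 2 C\, L^{-(q+1)} \int_0^T \|w^{ij}(\cdot,t)\|_{*}\,dt,
\end{equation*}
where $\|\cdot\|_{*}$ denotes the Sobolev-type norm on $K$ required by the kernel lemma.

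Next, I would bound the time integral. Using H\"older's inequality and Sobolev embedding (consistent with the assumption \eqref{eq: hp on v regularity} with exponent $p>p_0$, whose role is precisely to control products such as $v^i v^j$ in the relevant Sobolev norm in dimensions $d=2,3$), one reduces the bound on $\|w^{ij}(\cdot,t)\|_{*}$ to a product of norms of $v^i$ and $v^j$. The spectral gap $\lambda_0>0$ of the bilinear form associated with the periodic parabolic cell problem \eqref{eq:parabolic periodic problem} provides an exponential decay $\|v^i(\cdot,t)\|_{L^2(K)} \le C e^{-\lambda_0 t}$, and the standard parabolic energy estimate gives $\int_0^\infty \|\nabla v^i(\cdot,t)\|_{L^2(K)}^2\,dt < \infty$. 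Combining these ingredients shows that $\int_0^\infty \|w^{ij}(\cdot,t)\|_{*}\,dt$ is bounded by a constant depending only on $\alpha,\beta,d$ and the initial data $\nabla\cdot(a\mathbf{e}_i)$, and in particular independent of $T$ and of $L$.

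The main obstacle, and the reason assumption \eqref{eq: hp on v regularity} appears in the theorem hypotheses, is in Step~2: one must ensure that the Sobolev norm $\|w^{ij}(\cdot,t)\|_{*}$ on $K$ can be controlled by a quantity whose time integral is finite. In low dimension this is immediate, but for $d=2,3$ the product $v^i v^j$ forces one to go beyond the standard $L^2((0,\infty),H^1_{\mathrm{per}}(K))$ estimate and use the $L^p((0,T),W^{1,p}_{\mathrm{per}}(K))$ regularity with $p>p_0$, together with Sobolev embedding to bound the product. Once this is handled, the conclusion $|I^3_{ij}| \le C_3 L^{-(q+1)}$ with $C_3$ independent of $L$ (and of $T$) follows directly.
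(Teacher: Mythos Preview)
Your overall strategy---apply the kernel lemma pointwise in $t$ to the $K$-periodic product $v^i(\cdot,t)v^j(\cdot,t)$ and then integrate in time---is exactly the paper's approach. However, you misidentify which regularity is needed. Assumption \eqref{eq: hp on v regularity} is \emph{not} a hypothesis of this lemma; the lemma only assumes $v^i\in L^2([0,\infty),W^1_{per}(K))$, and that is all the paper uses. The extra $L^p$-regularity \eqref{eq: hp on v regularity} enters the main theorem only through the boundary term $I^2_{ij}$ (to control the function $F$ in \eqref{eq:theta2}), not through $I^3_{ij}$. So as written, your proof invokes a hypothesis the lemma does not grant.

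The paper avoids this by being careful about the exponent in \cref{lemma: kernel}. For $q\ge 2$ the kernel lemma can be applied with $p=1$, so one simply bounds $\|v^iv^j\|_{L^1(K)}\le \|v^i\|_{L^2(K)}\|v^j\|_{L^2(K)}$ and then uses Cauchy--Schwarz in time together with the finiteness of $\|v^i\|_{L^2([0,\infty),L^2(K))}$---no exponential decay and no gradient bounds are needed. For $q\in\{0,1\}$ the kernel lemma requires $p>1$; the paper takes $p=3/2$, uses the embedding $W^{1,1}(K)\hookrightarrow L^{3/2}(K)$ and the product rule $\|v^iv^j\|_{W^{1,1}(K)}\le C\|v^i\|_{W^1_{per}(K)}\|v^j\|_{W^1_{per}(K)}$, and finishes with Cauchy--Schwarz in time and the stated hypothesis $v^i\in L^2([0,\infty),W^1_{per}(K))$. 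In neither case is \eqref{eq: hp on v regularity} or the spectral-gap decay invoked. Your sketch would go through if you drop the appeal to \eqref{eq: hp on v regularity} and instead track the exponent allowed by the kernel lemma as the paper does.
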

\begin{proof}
	By applying \cref{lemma: kernel} to function $2 v^iv^j$ we get:
	\begin{equation}\label{eq: Lp norm product}
	\abs{I^3_{ij}} \le \int_{0}^{T} C \norm{v^i(\cdot,t)v^j(\cdot,t)}_{L^{p}(K)} L^{-(q+1)} \,dt ,
	\end{equation}
	with $1 < p\le 2$. Following the proof of \cref{lemma: kernel} (see Appendix A, Ref. \refcite{Glo11}), we deduce that, for any $q\ge 2$ one can also choose $p=1$ in the inequality above. Therefore, by the use of Cauchy--Schwarz and H\"older inequalities, $I^3_{ij}$ can be estimated as
	\begin{equation*}
	\begin{split}
	\abs{I^3_{ij}} &\le \int_{0}^{T} C \norm{v^i(\cdot,t)v^j(\cdot,t)}_{L^{1}(K)} L^{-(q+1)} \,dt \\
	&\le C  L^{-(q+1)} \int_{0}^{T} \norm{v^i(\cdot,t)}_{L^{2}(K)} \norm{v^j(\cdot,t)}_{L^{2}(K)}\,dt \\
	&\le C  L^{-(q+1)} \norm{v^i}_{L^2([0,+\infty),L^{2}(K))}  \norm{v^j}_{L^2([0,+\infty),L^{2}(K))}. 
	\end{split}
	\end{equation*}
	The result follows by choosing 
	\[
	C_3 := C \norm{v^i}_{L^2([0,+\infty),L^{2}(K))}  \norm{v^j}_{L^2([0,+\infty),L^{2}(K))}.
	\]
	
	In the case $q\in\set{0,1}$ we cannot utilize any more the $L^1$-norm of the product. In view of \cref{eq: Lp norm product}, with the choice $p=3/2$, it follows that
	\begin{equation*}
	\begin{split}
	\abs{I^3_{ij}} &\le \int_{0}^{T} C \norm{v^i(\cdot,t)v^j(\cdot,t)}_{L^{3/2}(K)} L^{-(q+1)} \,dt \\
	&\le \int_{0}^{T} C \norm{v^i(\cdot,t)v^j(\cdot,t)}_{W^{1,1}(K)} L^{-(q+1)} \,dt \\
	&\le \int_{0}^{T} C \norm{v^i(\cdot,t)}_{W^{1}_{per}(K)} \norm{ v^j(\cdot,t)}_{W^{1}_{per}(K)} L^{-(q+1)} \,dt \\
	&\le C  L^{-(q+1)} \norm{v^i}_{L^2([0,+\infty),W^{1}_{per}(K))}  \norm{v^j}_{L^2([0,+\infty),W^1_{per}(K))},
	\end{split}
	\end{equation*}
	where the first inequality is a direct application of \cref{lemma: kernel}, the second inequality follows from the continuous inclusion of $W^{1,1}(K)$ in $L^{3/2}(K)$, the third inequality comes from the embedding $W^{1}_{per}(K)\subset W^{1,1}(K)$ and the validity of the \add{derivative product rule for functions $v^i\in W^{1,p}(\Omega)\cap L^{\infty}(\Omega)$ for $1\le p\le\infty$ (see, e.g. Ref. \refcite{Bre10} for a proof)} which implies: 
	\begin{equation*}
	\norm{v^i(\cdot,t) v^j(\cdot,t)}_{W^{1,1}(K)} \le C \norm{v^i(\cdot,t)}_{W^{1}_{per}(K)} \norm{ v^j(\cdot,t)}_{W^{1}_{per}(K)}.
	\end{equation*}
	Finally, the last inequality is the Chauchy-Schwarz inequality. The result follows by choosing 
	\[
	C_3 := C \norm{v^i}_{L^2([0,+\infty),W^{1}_{per}(K))} \norm{v^j}_{L^2([0,+\infty),W^{1}_{per}(K))}.
	\]
\end{proof}
\subsection{Bound for $I^4_{ij}$}
In this subsection we derive an \textit{a-priori} estimate for the truncation error, which originates from the restriction of the time integral in \cref{eq: a0 parabolic dirichlet} on the finite interval $[0,T]$. As it will be clearer from the coming analysis, the time truncation is essential for improving the convergence rate of the resonance error, as large values of $T$ result in a pollution of the correctors.
First of all, we recall the following lemma on the exponential decay in time of $\norm{v^i(\cdot,t)}_{L^2(K)}$.
\begin{lemma}\label{lemma:exponential decay}
	Let $v^i \in C([0,\infty), L^2(K))$ be the solution of \cref{eq:parabolic periodic problem} and let $\lambda_0>0$ be the smallest eigenvalue of the bilinear form $B$ introduced in \cref{eq: bilinear form}.	
	Then
	\begin{equation*}
	\norm{v^i(\cdot,t)}_{L^2(K)} \le e^{-\lambda_0 t} \norm{v^i(\cdot,0)}_{L^2(K)}, \quad \operatorname{a.e.} t \in [0,+\infty).
	\end{equation*}
\end{lemma}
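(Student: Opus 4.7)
The proof is a standard energy estimate combined with the Rayleigh--Ritz characterization of $\lambda_0$. The key observation is that the parabolic flow conserves the mean of $v^i$ over $K$, so if $v^i(\cdot,0)$ has zero mean (which must hold because the formula \cref{eq:new a0 periodic} is consistent only in that case — testing \cref{eq:parabolic periodic problem} with the constant $1$ gives $\frac{d}{dt}\fint_K v^i\,d\bx = 0$), then $v^i(\cdot,t)$ lies in the subspace on which $\lambda_0$ is actually positive.

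\textbf{Step 1 (energy identity).} I would test \cref{eq:parabolic periodic problem} with $v^i(\cdot,t)$ itself. The periodic boundary conditions make the boundary terms from the integration by parts vanish, and one obtains, for a.e. $t \in [0,+\infty)$,
\begin{equation*}
\frac{1}{2}\frac{d}{dt}\norm{v^i(\cdot,t)}_{L^2(K)}^2 + B\bigl(v^i(\cdot,t),v^i(\cdot,t)\bigr) = 0,
\end{equation*}
where $B$ is the bilinear form from \cref{eq: bilinear form}. This is the usual energy identity and is justified by the regularity $v^i \in C([0,\infty),L^2(K)) \cap L^2_{\mathrm{loc}}(H^1_{per}(K))$ together with $\partial_t v^i \in L^2_{\mathrm{loc}}(H^{-1}_{per}(K))$, which allow the chain rule $\frac{d}{dt}\norm{v^i}_{L^2}^2 = 2\langle \partial_t v^i, v^i\rangle$ in the sense of distributions.

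\textbf{Step 2 (spectral gap).} By the variational definition of $\lambda_0$ as the smallest eigenvalue of $B$ on the relevant admissible space of periodic mean-zero functions, together with the mean-zero property of $v^i(\cdot,t)$ noted above, the Rayleigh quotient inequality yields
\begin{equation*}
B\bigl(v^i(\cdot,t),v^i(\cdot,t)\bigr) \geq \lambda_0 \norm{v^i(\cdot,t)}_{L^2(K)}^2.
\end{equation*}
Substituting into the energy identity gives the differential inequality $\frac{d}{dt}\norm{v^i(\cdot,t)}_{L^2(K)}^2 \leq -2\lambda_0 \norm{v^i(\cdot,t)}_{L^2(K)}^2$.

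\textbf{Step 3 (Gronwall) and main obstacle.} A direct application of Gronwall's lemma then gives $\norm{v^i(\cdot,t)}_{L^2(K)}^2 \leq e^{-2\lambda_0 t}\norm{v^i(\cdot,0)}_{L^2(K)}^2$ for a.e. $t$, which is the stated estimate after taking square roots. The only subtle point — and the step I would need to justify most carefully — is verifying that $v^i(\cdot,t)$ stays in the subspace on which $\lambda_0 > 0$, i.e., that $\fint_K v^i(\bx,t)\,d\bx = 0$ for all $t \geq 0$. As sketched above, this follows by testing the PDE with the constant function $1$, which shows that the mean is conserved in time, combined with the fact that the initial datum is mean-zero by construction of the parabolic corrector scheme.
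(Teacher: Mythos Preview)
Your proof is correct and follows essentially the same route as the paper: test the equation with $v^i$ to obtain the energy identity, use the spectral gap $B[v^i,v^i]\ge\lambda_0\norm{v^i}_{L^2(K)}^2$, and conclude by Gronwall. The paper obtains the spectral gap via an eigenfunction expansion and Parseval rather than the Rayleigh quotient, and it does not spell out the mean-zero preservation argument (it simply records $v^i(\cdot,0)\in L^2_0(K)$), so your Step~3 discussion is a welcome clarification rather than a deviation.
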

\begin{proof}
	The weak formulation of \cref{eq:parabolic periodic problem} reads: Find $v^i \in L^2([0,+\infty), W^1_{per}(K))$, $\partial_t v^i \in L^2([0,+\infty), W^1_{per}(K)')$ such that
	\begin{equation*}
	\begin{aligned}
	&\left( \partial_t v^i, w \right) + B[v^i,w] = 0, \qquad \forall w \in W^1_{per}(K), \\
	&v^i(\cdot,0) = \nabla \cdot \left( a\mathbf{e}_i \right)\in L_0^2(K).
	\end{aligned}
	\end{equation*}
	By using $w = v^i(\cdot,t)$, the second line becomes
	\begin{equation*}
	\frac{1}{2} \frac{d}{dt} \norm{v^i}^2_{L^2(K)} = - B[v^i,v^i].
	\end{equation*}
	Let $\left\lbrace\lambda_j\right\rbrace_{j=0}$ and $\left\lbrace\varphi_j\right\rbrace_{j=0}$ be, respectively, the eigenvalues and eigenfunctions of $B$ and let us denote $\hat{v}^i_j:= \langle v^i,\varphi_j\rangle_{L^2(K)}$. By orthogonality of the eigenfunctions and Parseval's identity, it holds
	\begin{equation*}
	B[v^i,v^i] = \sum_{j=0}^{\infty} \lambda_j \abs{\hat{v}^i_j}^2 \ge \lambda_0 \sum_{j=0}^{\infty} \abs{\hat{v}^i_j}^2 = \lambda_0 \norm{v^i}^2_{L^2(K)}.
	\end{equation*}
	
	Then, by coercivity of the bilinear form $B$ and use of the above inequality, we get
	 \begin{equation*}
	 \norm{v^i}_{L^2(K)} \frac{d}{dt} \norm{v^i}_{L^2(K)} = \frac{1}{2} \frac{d}{dt} \norm{v^i}^2_{L^2(K)} = - B[v^i,v^i] \le - \lambda_0 \norm{v^i}_{L^2(K)}^2.
	 \end{equation*}
	 So, the following differential inequality is derived:
	 \begin{equation*}
	 \frac{d}{dt} \norm{v^i}_{L^2(K)} \le - \lambda_0 \norm{v^i}_{L^2(K)}.
	 \end{equation*}
	 As proved in Ref. \refcite{Eva10}, $\norm{v^i(\cdot,t)}_{L^2(K)}$ is absolutely continuous in time, and the result is obtained by Gronwall's inequality.	 
\end{proof}

\begin{remark} \label{remark: value lambda0}
	It is easy to prove that $\lambda_0 \ge \frac{\alpha}{C_P^2}$, where the Poincar\'e constant for a convex domain $K$ is $C_P=\frac{diam(K)}{\pi}$, see Ref. \refcite{PaW60}. 
	\add{Here, we assume to know exactly $diam(K)$. In practice, it can often be estimated.}
\end{remark}

\begin{lemma}[Truncation error]\label{lemma: I_4}
	Let $v^i \in C([0,+\infty),L^2(K))$ solve \cref{eq:parabolic periodic problem}, and let 
	\begin{equation*}
		I_{ij}^4 :=  2\int_{T}^{+\infty} \fint_{ K} v^i(\bx,t) v^j(\bx,t) d\bx\,dt.
	\end{equation*}
	Then, there exist $C_4>0$, independent of $T$, such that
	\begin{equation}
	\label{eq:truncation error}
		\abs{I^4_{ij}}  \le C_4 e^{- 2 \lambda_0 T},
	\end{equation}
	where $\lambda_0$ is the smallest eigenvalue of $B$.
\end{lemma}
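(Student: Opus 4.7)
The plan is to estimate the space integral pointwise in time by Cauchy--Schwarz, then invoke the exponential decay result of \Cref{lemma:exponential decay} on each factor, and finally perform the resulting elementary integral in time.

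More concretely, first I would write, for almost every $t\ge T$,
\begin{equation*}
\left|\fint_K v^i(\bx,t)\,v^j(\bx,t)\,d\bx\right| \le \frac{1}{|K|}\,\norm{v^i(\cdot,t)}_{L^2(K)}\,\norm{v^j(\cdot,t)}_{L^2(K)},
\end{equation*}
by Cauchy--Schwarz (recall $|K|=1$, so the prefactor is harmless). Then I would apply \Cref{lemma:exponential decay} to both $v^i$ and $v^j$ to obtain
\begin{equation*}
\norm{v^i(\cdot,t)}_{L^2(K)}\,\norm{v^j(\cdot,t)}_{L^2(K)} \le e^{-2\lambda_0 t}\,\norm{v^i(\cdot,0)}_{L^2(K)}\,\norm{v^j(\cdot,0)}_{L^2(K)}.
\end{equation*}
Since the initial data satisfy $v^i(\cdot,0)=\nabla\cdot(a\mathbf{e}_i)\in L^2_0(K)$ and the entries of $a$ are bounded by $\beta$ (so the $L^2(K)$-norm of $v^i(\cdot,0)$ is controlled by a constant depending only on $\alpha,\beta,d$ and the regularity of $a$ used in \cref{eq: hp on a regularity}), these norms are finite constants independent of $T$.

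Next I would integrate in time from $T$ to $\infty$:
\begin{equation*}
|I^4_{ij}| \le 2\,\norm{v^i(\cdot,0)}_{L^2(K)}\,\norm{v^j(\cdot,0)}_{L^2(K)}\int_T^{+\infty} e^{-2\lambda_0 t}\,dt = \frac{1}{\lambda_0}\,\norm{v^i(\cdot,0)}_{L^2(K)}\,\norm{v^j(\cdot,0)}_{L^2(K)}\,e^{-2\lambda_0 T}.
\end{equation*}
Setting
\begin{equation*}
C_4 := \frac{1}{\lambda_0}\,\norm{v^i(\cdot,0)}_{L^2(K)}\,\norm{v^j(\cdot,0)}_{L^2(K)}
\end{equation*}
yields the required bound \eqref{eq:truncation error}.

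There is really no hard step here: the whole argument is a direct consequence of \Cref{lemma:exponential decay}. The only minor point worth emphasizing is why $C_4$ is $T$-independent, which follows because the initial condition $\nabla\cdot(a\mathbf{e}_i)$ does not depend on $T$, and the positive spectral gap $\lambda_0$ of the bilinear form $B$ is intrinsic to the operator.
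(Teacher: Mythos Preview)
Your proof is correct and follows essentially the same route as the paper: Cauchy--Schwarz in space, then \Cref{lemma:exponential decay} on each factor, then the explicit time integral, with the same resulting constant $C_4=\frac{1}{\lambda_0|K|}\norm{v^i(\cdot,0)}_{L^2(K)}\norm{v^j(\cdot,0)}_{L^2(K)}$ (the paper retains the harmless $1/|K|$ factor explicitly).
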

\begin{proof}
	We start by applying the Cauchy-Schwarz inequality on $L^2(K)$:
	\begin{equation}\label{truncation error estimate 1}
	\abs{I^4_{ij}}  \le \frac{2}{\abs{K}} \int_{T}^{\infty} \norm{ v^i(\cdot,t) }_{L^2(K)} \norm{ v^j(\cdot,t) }_{L^2(K)}   \,dt.
	\end{equation}
	 Then, we plug the result of \cref{lemma:exponential decay} into \cref{truncation error estimate 1}:
	\begin{equation*}\label{truncation error estimate 2}
	\begin{split}
	\abs{I^4_{ij} } &\le \frac{2 }{\abs{K}} \int_{T}^{\infty}e^{- 2\lambda_0 t} \norm{ v^i(\cdot,0) }_{L^2(K)} \norm{ v^j(\cdot,0) }_{L^2(K)}  \,dt\\
	&\le \frac{1}{\abs{K}} \norm{ v^i(\cdot,0) }_{L^{2}(K)} \norm{ v^j(\cdot,0) }_{L^{2}(K)} \frac{1}{\lambda_0} e^{-2 \lambda_0 T }.
	\end{split}
	\end{equation*}
	The result follows by choosing
	\begin{align*}
	C_4 &= \frac{1}{\lambda_0 \abs{K}} \norm{ v^i(\cdot,0) }_{L^2(K)} \norm{ v^j(\cdot,0) }_{L^2(K)}\\
		&= \frac{1}{\lambda_0 \abs{K}} \norm{ \nabla\cdot\left(a(\cdot)\mathbf{e}_i\right) }_{L^2(K)} 
		\norm{ \nabla\cdot\left(a(\cdot)\mathbf{e}_j\right)  }_{L^2(K)}.
	\end{align*}
	
\end{proof}
%
\subsection{Bound for $I^2_{ij}$}
From the definition,
\begin{equation} \label{eq: definition I2}
	I^2_{ij}:= 2\int_{0}^{T} \int_{K_L} \left(v^iv^j - u^iu^j\right) \mu_L \,d\bx \,dt,
\end{equation}
one can notice that the source of the error $I^2_{ij}$ is the mismatch between $u^i$ and $v^i$ on the boundary $\partial K_R$. Therefore, we refer to such an error as the \emph{boundary} error. 
The boundary error converges to zero at an exponential rate, as stated in \cref{lemma: est I2}.

\begin{lemma}\label{lemma: est I2}
	Let assumptions \eqref{eq: hp on a} be satisfied, let $I^2_{ij}$ be defined by \cref{eq: definition I2}, $0<L<R-2$, \add{${T<\frac{2\nacon}{ d}\abs{\tilde{R}-L}^2}$ and $T<\tilde{C}_{2,c}\abs{R-L}$, where the constants $\nacon,\tilde{C}_{2,c}>0 $ are defined in \Cref{thm:Aro68} and \Cref{prop: bound I2c}, respectively.}
	Then, there exists a constant $C>0$ and a constant $0<c\le\nu$, independent of $R$, $L$ and $T$ such that 
	\add{%
		\begin{equation*}
			\abs{I^2_{ij}}
				\le 
				C
				\left[
				 \frac{R^{d-1}}{T^{d/2}} e^{-c\frac{\abs{R-L}^2}{T}}  
				+ \frac{R^{2(d-1)}}{T^{d}} 
				e^{-2c\frac{\abs{R-L}^2}{T}}
				\right].
		\end{equation*}}
\end{lemma}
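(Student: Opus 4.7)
My plan is to decompose the integrand via the algebraic identity
\begin{equation*}
	v^iv^j - u^iu^j \;=\; w^iv^j + v^iw^j - w^iw^j, \qquad w^i := v^i - u^i ,
\end{equation*}
so that $I^2_{ij}$ splits into two pieces linear in $w$ and one quadratic piece $-2\int_0^T\!\int_{K_L}w^iw^j\mu_L\,d\mathbf{x}\,dt$. Since $v^i$ (extended periodically from $K$) and $u^i$ share the common initial datum $\nabla\cdot(a\mathbf{e}_i)$ and satisfy the same evolution, the boundary-layer function $w^i$ solves the homogeneous parabolic problem on $K_R\times(0,T)$ with zero initial condition and lateral Dirichlet datum $v^i|_{\partial K_R}$. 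This is precisely the object captured by the boundary-layer function defined in \eqref{eq:theta2}.

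The core step will be a sharp control of $w^i$ on the filter support $K_L$. I would represent $w^i$ through the Dirichlet Green's function of $\partial_t-\nabla\cdot(a\nabla\cdot)$ on $K_R$, or equivalently through a Poisson-type kernel on $\partial K_R\times(0,T)$, and then invoke the Nash--Aronson Gaussian upper bound (\Cref{thm:Aro68}). Because $\operatorname{dist}(K_L,\partial K_R)\ge|R-L|/2$, the kernel produces a pointwise factor $t^{-d/2}e^{-c|R-L|^2/t}$ for some $0<c\le\nu$, while integration over $\partial K_R$ contributes the surface factor of order $R^{d-1}$. The two hypotheses $T<\tfrac{2\nu}{d}|\tilde R-L|^2$ and $T<\tilde C_{2,c}|R-L|$ are what make this bound sharp on $(0,T]$: the first ensures that $t\mapsto t^{-d/2}e^{-c|R-L|^2/t}$ is monotone on $(0,T]$ so its maximum is attained at $t=T$; the second allows polynomial-in-$T$ prefactors arising from the time integration to be swallowed by the exponential by lowering $\nu$ slightly to $c$.

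With such a boundary-layer estimate in hand, the two linear contributions follow from Cauchy--Schwarz in space, the $L^\infty$-control of $\mu_L$, and the exponential $L^2$-decay $\|v^j(\cdot,t)\|_{L^2(K)}\le e^{-\lambda_0 t}\|v^j(\cdot,0)\|_{L^2(K)}$ of \Cref{lemma:exponential decay}; integrating in $t$ yields the first summand $CR^{d-1}T^{-d/2}e^{-c|R-L|^2/T}$. Applying the same pointwise estimate twice to the quadratic piece squares both the Gaussian and the surface/time factor, producing the second summand $CR^{2(d-1)}T^{-d}e^{-2c|R-L|^2/T}$; summing the three contributions gives the claim.

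The main obstacle will be the sharp boundary-layer bound. Nash--Aronson supplies Gaussian decay of the parabolic kernel away from the diagonal, but turning it into a control of $w^i$ usable under time integration requires, on the one hand, a trace argument to make sense of $v^i|_{\partial K_R}$ in a sufficiently strong topology — this is where the refined $L^p$ regularity hypothesis \eqref{eq: hp on v regularity} enters, via the embeddings discussed in the remark following its statement — and on the other hand a careful passage from the Nash--Aronson constant $\nu$ to the slightly smaller $c$, so that the polynomial prefactors $t^{-d/2}$ and surface factors $R^{d-1}$ can be absorbed into the exponential uniformly. The two $T$-restrictions in the hypothesis appear precisely to validate this absorption, and in particular the sharper linear constraint $T<\tilde C_{2,c}|R-L|$ (coming from \Cref{prop: bound I2c}) is what makes it possible to reduce the time integral to the value of its integrand at $t=T$ without losing a spurious polynomial factor.
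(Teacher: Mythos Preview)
Your decomposition and the overall strategy match the paper's: on the support $K_L$ of $\mu_L$ one has $\rho\equiv 1$, so your $w^i=v^i-u^i$ coincides (up to sign) with the paper's $\theta^i=u^i-\rho v^i$, and the split into linear-in-$w$ and quadratic-in-$w$ pieces is exactly the split into $I^{2,b}_{ij}$ and $I^{2,c}_{ij}$ in \eqref{eq: defs I2abc}, followed by \Cref{prop: bound I2b,prop: bound I2c}. The use of Cauchy--Schwarz together with \Cref{lemma:exponential decay} for the linear terms, and squaring the pointwise bound for the quadratic term, is also what the paper does.

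The gap is in the core step, the pointwise control of the boundary-layer function. You propose to represent $w^i$ via a Poisson-type kernel on $\partial K_R\times(0,T)$ and then invoke Nash--Aronson. But \Cref{thm:Aro68} bounds only the Green's function $G(\bx,t;\by)$, not its conormal derivative on $\partial K_R$; Gaussian estimates on the parabolic Poisson kernel for merely bounded measurable $a$ on a Lipschitz domain (the cube) are not available from that statement and are substantially harder. The paper avoids this entirely by the cut-off construction: setting $\theta^i:=u^i-\rho v^i$ with $\rho$ as in \Cref{def:cutoff} yields a problem \eqref{eq:theta2} with \emph{homogeneous} Dirichlet data but with initial datum $v^i(\cdot,0)(1-\rho)$ and forcing $F$ both supported in the boundary layer $\Delta=K_R\setminus K_{\tilde R}$. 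One can then use the genuine Green's-function representation \eqref{eq:theta through green function2}, to which \eqref{eq:NashAronson} applies directly; the factor $R^{d-1}$ comes from $|\Delta|\le 2dR^{d-1}$, not from a surface integral. This is also why the regularity hypothesis \eqref{eq: hp on v regularity} enters: it ensures $F\in L^q((0,T);L^p(K_R))$ so that part~e) of \Cref{thm:Aro68} is applicable, rather than serving as a trace argument for $v^i|_{\partial K_R}$. Your identification of $w^i$ with ``the object captured by \eqref{eq:theta2}'' is therefore inaccurate: the two functions agree on $K_L$, but the problem \eqref{eq:theta2} is precisely the reformulation that makes the Nash--Aronson bound usable.
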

 
The proof of \cref{lemma: est I2} directly follows from \cref{prop: bound I2b,prop: bound I2c}. We need \cref{def:boundary layer,def:cutoff} in order to define a \emph{boundary error function} which will be used in the estimation of $I^2_{ij}$.
\begin{definition}[Boundary layer]\label{def:boundary layer}
	Let us define a sub-domain $K_{\tilde{R}}\subset K_R$, where $\tilde{R}$ is defined to be the largest integer such that $\tilde{R}\le R-1/2$.
	The \emph{boundary layer} is defined as the set $\Delta := K_{R} \setminus K_{\tilde{R}}$. We observe that $\abs{\Delta} = R^d - \tilde{R}^d \le 2d R^{d-1}$.
\end{definition}
\begin{definition}[Cut-off function]\label{def:cutoff}
	A \emph{cut-off function} on $K_{R}$ is a function ${\rho \in C^{\infty}(K_R,[0,1])}$ such that 
	\begin{equation*}
	\rho(\bx) = 
	\left\lbrace
	\begin{array}{l}
	1 \text{ in } K_{\tilde{R}}\\
	0 \text{ on } \partial K_{R}
	\end{array}
	\right. 
	\quad\text{and}\quad
	\abs{\nabla \rho(\bx)}\le C_{\rho} \,\text{ on }\Delta,
	\end{equation*} 
	where the subdomain $K_{\tilde{R}}$ and the boundary layer $\Delta$ are defined according to \cref{def:boundary layer}.
\end{definition}
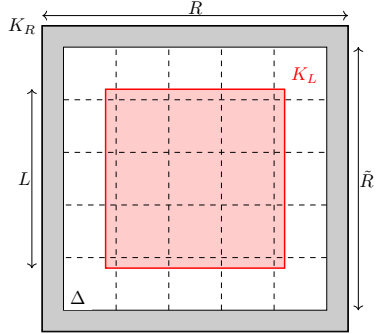
\begin{figure}[h!]
	\centering
	\scalebox{.7}{\begin{tikzpicture}
\draw[very thick](-2.9,-2.9) rectangle(2.9,2.9) ;
\filldraw[fill=black!20, even odd rule] (-2.9,-2.9) rectangle(2.9,2.9)  (-2.5,-2.5) rectangle(2.5,2.5) ;
\filldraw[fill = red!20, draw = red, thick] (-1.7,-1.7) rectangle (1.7,1.7);
\draw[dashed] (-1.5,-2.5) -- (-1.5,2.5);
\draw[dashed] (-.5,-2.5) -- (-.5,2.5);
\draw[dashed] (1.5,-2.5) -- (1.5,2.5);
\draw[dashed] (.5,-2.5) -- (.5,2.5);
\draw[dashed] (-2.5,1.5) -- (2.5,1.5);
\draw[dashed] (-2.5,.5) -- (2.5,.5);
\draw[dashed] (2.5,-.5) -- (-2.5,-.5);
\draw[dashed] (2.5,-1.5) -- (-2.5,-1.5);
%
\draw[<->] (-2.9,3.1) -- (2.9,3.1);
\draw (0,3) node[above] {$R$};
\draw[<->] (3.1,-2.5) -- (3.1,2.5);
\draw (3,0) node[right] {$\tilde{R}$};
\draw[<->] (-3.1,-1.7) -- (-3.1,1.7);
\draw (-3,0) node[left] {$L$};
\draw (-2.9,2.9) node[left] {$K_R$};
\draw (1.7,1.7) node[above right, red, fill = white] {$K_L$};
\draw (-2.5,-2.5) node[fill= white, above right]{$\Delta$};
\end{tikzpicture}}	
	\caption{Scheme of the sampling domain $K_R$ and its subsets $K_L$, $K_{\tilde{R}}$ and $\Delta$.}
\end{figure}
Let us define the boundary error function $\theta^i\in L^2\left([0,+\infty),H^1_0(K_R)\right)$ through the relation ${\theta^i:= u^i - \rho v^i}$.  
For the analysis it is fundamental that $\rho = 1$ in $K_{\tilde{R}}$ and that $L< \tilde{R}$. By the definition of $\theta^i$, we write 
\begin{equation*}
I^2_{ij} = 2 \int_{0}^{T} \int_{K_L} \left[ v^iv^j\left(1-\rho^2  \right) - \add{\rho}\left(\theta^i v^j + v^i \theta^j \right) - \theta^i \theta^j \right] \mu_L \,d\bx \,dt.
\end{equation*}
One readily notice that the first term in the integral vanishes on the integration domain, since $\rho^2(\bx) =1$ for all $\bx\in K_{\tilde{R}} \supset K_{L}$.
So, we have to study the integrals
\begin{equation}\label{eq: defs I2abc}
	I^{2,b}_{ij} := \int_{0}^{T} \int_{K_L} v^i \theta^j \mu_L \,d\bx \,dt, \;\text{ and }\;
	I^{2,c}_{ij} := \int_{0}^{T} \int_{K_L} \theta^i \theta^j \mu_L \,d\bx \,dt. 
\end{equation}
As both integrals depend on the values that the functions $\theta^i$ take over the averaging domain $K_L$, we need to provide pointwise estimates for $\theta^i(\bx,t)$ on $K_L\times[0,T]$. This is done in \cref{subsubsec:est theta} by the use of the fundamental solution of problem \cref{eq:parabolic dirichlet problem}. 
 
\subsubsection{Estimates for $\theta^i$}
\label{subsubsec:est theta}
Here, we derive an upper bound for $\theta^i$ on ${K_L\times [0,T]}$. 
\newadd{%
Let us define
\begin{equation}
	\label{eq:definition F}
	F(\bx,t) := -\nabla(1-\rho(\bx)) \cdot a(\bx) \nabla v^i - \nabla \cdot \left[ a(\bx) v^i \nabla(1-\rho(\bx))  \right].
\end{equation}} 
By definition and linearity of the correctors problem, the function $\theta^i$ satisfies
\begin{equation} \label{eq:theta2}
	\left\lbrace
	\begin{aligned}
		& \frac{\partial \theta^i}{\partial t} - \nabla\cdot( a(\bx)\nabla \theta^i) = F(\bx,t) & \quad &\text{in } K_{R}\times(0,+\infty),\\
		& \theta^i = 0 &  \quad &\text{on }  \partial K_{R} \times (0,+\infty),\\
		& \theta^i(\bx,0) = v^i(\bx,0)(1-\rho(\bx)) & \quad &\text{in } K_{R}.
	\end{aligned}
	\right. 
\end{equation}
\newadd{ %
	The  pointwise values of $\theta^i(\bx,t)$ can be estimated by making use the weak fundamental solution $G(\cdot,\cdot;\by):(\bx,t)\mapsto\R$, defined as the solution of
	\begin{equation} \label{eq: whole definition greens function}
		\left\lbrace
		\begin{aligned}
			& \partial_t G(\cdot,\cdot;\by) - \nabla\cdot(a(\bx)\nabla G(\cdot,\cdot;\by)) = 0 & \quad &\text{in } K_{R}\times(0,+\infty)\\
			& G(\cdot,\cdot;\by) = 0 &  \quad &\text{on }  \partial K_{R} \times (0,+\infty)\\
			& G(\cdot,0;\by) = \delta_{\by} & \quad &\text{in } K_{R},
		\end{aligned}
		\right. 
	\end{equation}
	where $\delta_{\by}$ is the Dirac delta centred at $\by$ and thanks to the results stated in \Cref{thm:Aro68}.
	%
	%
	\begin{theorem}[Theorem 9 in Ref. \refcite{Aro68}]\label{thm:Aro68}
	Assume $a(\cdot) \in \mathcal{M}(\alpha,\beta,K_R)$, $\theta^i(\cdot,0)\in L^2(K_R)$ and $F\in L^q\left((0,T), L^p(K_R)\right)$, with $1<p,q\le \infty$ and $\frac{d}{2p} + \frac{1}{q} \le 1$. Then, 
	\begin{enumerate}[label = \alph*)]
		\item $G(\bx,t;\by) = G(\by,t;\bx)$ for $\bx\in K_R$, $\by\in K_R$ and $t>0$.
		\item For each $\by\in K_R$, $G(\cdot,\cdot;\by)$ is continuous in $K_R\times(0,T)$.
		\item The Nash--Aronson estimate 
		\begin{equation}\label{eq:NashAronson}
			0 \le G(\bx,t;\by) \le \frac{C}{t^{d/2}} e^{-\nu \frac{\abs{\bx-\by}^2}{t}}, 
		\end{equation}
		holds true with $C(\alpha,\beta,d)>0$ and $\nu(\beta,d)>0$.
		\item For fixed $\by$ and arbitrary $\tau\in(0,T)$, $G(\cdot,\cdot;\by)\in L^2((\tau,T), H^1_0(K_R)) $.
		\item For any $t>0$, the weak solution $\theta^i$ of \cref{eq:theta2} can be represented by:
		\begin{multline}\label{eq:theta through green function}
			\theta^i(\bx,t) = \int_{K_R} G(\bx,t;\by) v^i(\by,0)(1-\rho(\by)) \,d\by \\
			+\int_{K_R} \int_{0}^{t} G(\bx,t-s;\by) F(\by,s)  \,ds \,d\by .
		\end{multline}
	\end{enumerate}
	\end{theorem}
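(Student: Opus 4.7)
The plan is to follow the classical Aronson construction of the parabolic fundamental solution, exploiting the symmetry and self-adjointness of $L := -\nabla\cdot(a\nabla)$ with homogeneous Dirichlet data. Since $a\in\mathcal{M}(\alpha,\beta,K_R)$ is symmetric, $L$ is a self-adjoint positive operator on $L^2(K_R)$ with form domain $H^1_0(K_R)$, and generates a strongly continuous semigroup $\{S(t)\}_{t\ge 0}$ on $L^2(K_R)$. I would define $G(\cdot,t;\by)$ as the Schwartz kernel of $S(t)$; existence of this kernel, together with (a), is immediate from self-adjointness via the duality identity $(S(t)\phi,\psi) = (\phi,S(t)\psi)$. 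Non-negativity in (c) follows from the parabolic maximum principle: approximating $\delta_{\by}$ by nonnegative mollifications, each solution stays nonnegative by the weak maximum principle for bounded measurable coefficients, and this property passes to the limit.

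The heart of the argument is the Gaussian upper bound in (c); here I would adopt the Davies perturbation technique, bootstrapping from an on-diagonal estimate. Nash's inequality $\norm{u}_{L^2(K_R)}^{2(1+2/d)} \le C \norm{\nabla u}_{L^2(K_R)}^2 \norm{u}_{L^1(K_R)}^{4/d}$ combined with the energy dissipation $\frac{d}{dt}\norm{S(t)f}_{L^2}^2 = -2(LS(t)f, S(t)f)$ yields the ultracontractive bound $\norm{S(t)}_{L^1\to L^\infty} \le C t^{-d/2}$, and hence the on-diagonal estimate $G(\bx,t;\by) \le C t^{-d/2}$. To upgrade this to Gaussian off-diagonal decay, I would conjugate by $e^{\lambda\phi}$ for a $1$-Lipschitz $\phi$: the perturbed semigroup $S^{\lambda}(t) := e^{-\lambda\phi} S(t) e^{\lambda\phi}$ satisfies an energy estimate whose effective ellipticity constant is degraded by $O(\beta\lambda^2)$, so that $\norm{S^\lambda(t)}_{L^1\to L^\infty} \le C t^{-d/2} e^{C\beta\lambda^2 t}$. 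Specializing $\phi(\bx) = \abs{\bx-\by}$ and optimizing in $\lambda \sim \abs{\bx-\by}/t$ produces the claimed $e^{-\nu\abs{\bx-\by}^2/t}$ decay with $\nu = \nu(\beta,d)$. Continuity (b) then comes for free: away from $(\by,0)$, the already bounded function $G(\cdot,\cdot;\by)$ is a weak solution of the homogeneous equation with $L^\infty$ data, so the parabolic De Giorgi--Nash--Moser theorem delivers local Hölder continuity.

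For (d), fix $\tau\in(0,T)$; by (c) we have $G(\cdot,\tau;\by)\in L^\infty(K_R)\subset L^2(K_R)$, and standard parabolic energy estimates applied on the cylinder $K_R\times(\tau,T)$ (test with $G(\cdot,\cdot;\by)$ itself and integrate in time) yield the $L^2((\tau,T), H^1_0(K_R))$ bound. Finally, (e) is Duhamel's principle for \cref{eq:theta2}: split $\theta^i = \theta^i_0 + \theta^i_F$, where $\theta^i_0$ solves the homogeneous equation with initial data $v^i(\cdot,0)(1-\rho)$ and $\theta^i_F$ solves the inhomogeneous equation with zero initial data and source $F$; the semigroup representation of the first term is $\theta^i_0(\bx,t) = \int_{K_R} G(\bx,t;\by) v^i(\by,0)(1-\rho(\by))\,d\by$ by construction of $G$, while integrating $F$ against $G(\cdot,t-s;\by)$ produces the second term. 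The scaling condition $\frac{d}{2p}+\frac{1}{q}\le 1$ is precisely what makes the Nash--Aronson kernel integrable against $F$ in the Duhamel integral, yielding a well-defined weak solution. The principal obstacle is unquestionably the Davies bootstrap in (c), since one must track the cross-terms $\lambda\nabla\phi\cdot a\nabla u$ in the perturbed energy identity carefully enough to guarantee that ellipticity degrades only by $O(\lambda^2)$ and that the resulting exponent $\nu$ depends only on $\beta$ and $d$.
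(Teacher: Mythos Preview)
This theorem is not proved in the paper: it is explicitly labeled as ``Theorem~9 in Ref.~\refcite{Aro68}'' and is quoted from Aronson's 1968 paper as a black-box tool for the subsequent analysis (in particular for \Cref{lemma: preliminary estimate sup-theta NEW:)}). There is therefore no proof in the paper to compare your proposal against.

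That said, your sketch is a faithful outline of the classical argument---Nash's inequality for the on-diagonal bound, Davies' exponential perturbation for the Gaussian off-diagonal decay, De~Giorgi--Nash--Moser for continuity, and Duhamel for the representation formula---and would be an appropriate route if one were asked to reproduce Aronson's result. One minor point: the dependence $\nu=\nu(\beta,d)$ (rather than $\nu(\alpha,\beta,d)$) that you correctly flag as the ``principal obstacle'' is indeed delicate, and Aronson's original argument handles the constants somewhat differently from the Davies method you describe; the paper itself notes in \Cref{remark: bound for nu} that Aronson obtains $\nu=\gamma/8$ with $\gamma$ depending only on $d$ and $\beta$.
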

\newadd{\begin{remark}\label{remark: bound for nu}
The practical choice of parameters in the numerical simulations requires to estimate the constant $\nu$ of \eqref{eq:NashAronson}. According to Ref. \refcite{Aro68} (equation (7.12)), $\nu$ can be estimated as $\nu = \gamma/8$, where $\gamma$ (which is called $\alpha$ in Ref. \refcite{Aro68}, Lemma 1) depends only on the dimension $d$ and the constant $\beta$ in \Cref{Def_MSpace}.
\end{remark}}
%
%
A straightforward computation using the definition of $F$, \eqref{eq:definition F}, shows that assumptions \eqref{eq: hp on a regularity} and \eqref{eq: hp on v regularity} in \Cref{thm: modelling error parabolic correctors filter} are needed in order to ensure that $F\in L^q\left((0,T), L^p(K_R)\right)$, with $1<p,q\le \infty$ and $\frac{d}{2p} + \frac{1}{q} \le 1$.
In the proofs of our main theorem we will use the following reformulation of $\theta^i$, using the definition of $F$ and integration by parts:
\begin{multline}\label{eq:theta through green function2}
	\theta^i(\bx,t) = \int_{K_R} G(\bx,t;\by) v^i(\by,0)(1-\rho(\by)) \,d\by \\
	-\int_{K_R} \int_{0}^{t} G(\bx,t-s;\by) \nabla_{\by}(1-\rho(\by)) \cdot a(\by) \nabla_{\by} v^i(\by,s)  \,ds \,d\by \\
	+ \int_{K_R} \int_{0}^{t} \nabla_{\by}G(\bx,t-s;\by) \cdot  a(\by) \nabla_{\by}(1-\rho(\by)) v^i(\by,s)  \,ds \,d\by.
\end{multline}
}

\add{An upper bound estimate for $\theta^i(\bx,t)$ on $K_L$ is given in the following lemma, whose proof is postponed to \ref{sec: appendix alter}.
	\begin{lemma}\label{lemma: preliminary estimate sup-theta NEW:)}
		Let assumptions \eqref{eq: hp on a} be satisfied, $0<L<R-2$ and $\theta^i$ be the solution of \eqref{eq:theta2}.
		Then, there exist constants $C_1,C_2>0$ and $0<c\le \nacon$ (where $\nacon$ is the constant in \eqref{eq:NashAronson}), independent of $R$ and $L$, such that, for any $0<t<2\nacon\abs{\tilde{R}-L}^2$,
		\begin{equation}\label{eq: preliminary estimate sup-thet NEW:)}
			\norm{\theta^i(\cdot,t)}_{L^{\infty}(K_L)} \le 
			C_1\norm{\nabla \cdot\left(a(\cdot)\mathbf{e}_i\right)}_{L^2(K)}
			\left( 1 + C_2 \frac{t}{\abs{R-L}} \right)
			\frac{R^{d-1}}{t^{d/2}}
			e^{-c\frac{\abs{R-L}^2}{t}}.
		\end{equation}
	\end{lemma}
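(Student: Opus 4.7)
The plan is to start from the representation formula \eqref{eq:theta through green function2} and to exploit that $(1-\rho(\by))$ and $\nabla_\by\rho(\by)$ are both supported in $\Delta=K_R\setminus K_{\tilde R}$, so the three integrals in \eqref{eq:theta through green function2} are effectively $\by$-integrals over $\Delta$. For $\bx\in K_L$ and $\by\in\Delta$ the geometric assumption $L<R-2$ (which ensures $L<\tilde R$) gives $\abs{\bx-\by}\ge c_0(\tilde R-L)$ for a universal constant, and therefore the Nash--Aronson estimate \eqref{eq:NashAronson} delivers $G(\bx,t;\by)\le C\,t^{-d/2}e^{-c\abs{R-L}^2/t}$ uniformly over $\bx\in K_L$, $\by\in\Delta$, for some $0<c\le\nu$. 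I would additionally invoke the companion Aronson gradient bound $\abs{\nabla_\by G(\bx,t;\by)}\le C\,t^{-(d+1)/2}e^{-\nu\abs{\bx-\by}^2/t}$ from Ref. \refcite{Aro68}, giving the corresponding pointwise estimate of $\abs{\nabla_\by G}$ on $K_L\times\Delta$.

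With these ingredients in hand, the three terms of \eqref{eq:theta through green function2} are treated separately. For the initial-data term I would apply Cauchy--Schwarz over $\Delta$ and combine $\abs{\Delta}^{1/2}\le CR^{(d-1)/2}$ with the periodicity-based bound $\norm{v^i(\cdot,0)}_{L^2(\Delta)}\le CR^{(d-1)/2}\norm{\nabla\cdot(a\mathbf{e}_i)}_{L^2(K)}$, which produces the ``$1$'' part of the bound, namely $C\,R^{d-1}t^{-d/2}e^{-c\abs{R-L}^2/t}\norm{\nabla\cdot(a\mathbf{e}_i)}_{L^2(K)}$. For the source term carrying $\nabla v^i$ I would apply Cauchy--Schwarz in the time variable, use the energy inequality $\norm{\nabla v^i}_{L^2((0,T);L^2(K))}\le C\norm{\nabla\cdot(a\mathbf{e}_i)}_{L^2(K)}$ obtained as in the proof of \Cref{lemma:exponential decay}, and evaluate the resulting kernel integral $\int_0^t u^{-d}e^{-2c\abs{R-L}^2/u}\,du$. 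For the remaining source term I would use the pointwise-in-time bound $\norm{v^i(\cdot,s)}_{L^2(K)}\le\norm{v^i(\cdot,0)}_{L^2(K)}$ from \Cref{lemma:exponential decay} together with the kernel integral $\int_0^t u^{-(d+1)/2}e^{-c\abs{R-L}^2/u}\,du$, which is multiplied again by the factor $C\,R^{d-1}$ coming from integrating over $\Delta$ against a periodic $L^2$-function.

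Both source estimates hinge on a sharp upper bound for the kernel integrals. Under the time constraint $t<2\nu\abs{\tilde R-L}^2$ I would use the substitution $w=c\abs{R-L}^2/u$, which turns each integral into an upper incomplete gamma function $\Gamma(s,z)$ evaluated at the large argument $z=c\abs{R-L}^2/t$. The asymptotic $\Gamma(s,z)\sim z^{s-1}e^{-z}$ then gives $\int_0^t u^{-d}e^{-2c\abs{R-L}^2/u}\,du\le C\,t^{2-d}\abs{R-L}^{-2}e^{-2c\abs{R-L}^2/t}$, whose square root is precisely of the form $R^{d-1}t^{-d/2}\cdot t/\abs{R-L}\cdot e^{-c\abs{R-L}^2/t}$; this contributes the $C_2\,t/\abs{R-L}$ part of the claim. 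The analogous argument for the $\nabla_\by G$ term produces the same form up to a multiplicative factor of order $t^{1/2}/\abs{R-L}$, which is bounded by a constant under the time constraint. Summing the three contributions and collecting the common factor $R^{d-1}t^{-d/2}e^{-c\abs{R-L}^2/t}\norm{\nabla\cdot(a\mathbf{e}_i)}_{L^2(K)}$ gives the stated bound. The main obstacle is precisely this sharp kernel estimate: a crude use of monotonicity of $u\mapsto u^{-d}e^{-2c\abs{R-L}^2/u}$ on $(0,2c\abs{R-L}^2/d)$ only yields $\int\le t^{1-d}e^{-2c\abs{R-L}^2/t}$, which overestimates the integral by a factor $\abs{R-L}^2/t$ and loses the desired $t/\abs{R-L}$ scaling, so the incomplete-gamma asymptotics (or the equivalent integration-by-parts identity $\Gamma(k,z)=e^{-z}P_{k-1}(z)$ for integer $k$) is essential.
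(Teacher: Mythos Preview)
Your overall strategy is the same as the paper's: start from the representation \eqref{eq:theta through green function2}, observe that all $\by$-integrals are supported in the boundary layer $\Delta$, use the geometric separation $\abs{\bx-\by}\gtrsim R-L$ for $\bx\in K_L$, $\by\in\Delta$ together with the Nash--Aronson bound \eqref{eq:NashAronson}, and control the time-convolution integrals via the incomplete-gamma asymptotics $\Gamma(s,z)\sim z^{s-1}e^{-z}$. Your treatment of the initial-data term and of the source term carrying $\nabla v^i$ matches the paper's proof essentially line by line, including the Cauchy--Schwarz in time and the parabolic energy estimate for $\nabla v^i$.

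The one substantive difference is your handling of the third term, the one containing $\nabla_{\by}G$. You invoke a pointwise Gaussian gradient bound
\[
\abs{\nabla_{\by}G(\bx,t;\by)}\le C\,t^{-(d+1)/2}e^{-\nu\abs{\bx-\by}^2/t}
\]
and attribute it to Aronson's 1968 paper. That attribution is incorrect: Aronson's paper treats merely bounded measurable coefficients and contains no pointwise gradient estimate of this type; such a bound requires at least H\"older (here Lipschitz, via assumption \eqref{eq: hp on a regularity}) coefficients and comes from classical parabolic theory for smooth coefficients (Friedman, Ladyzhenskaya--Solonnikov--Ural'tseva, Eidel'man), not from Ref.~\refcite{Aro68}. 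With the correct reference your argument goes through, and your observation that the resulting contribution differs from the $t/\abs{R-L}$ term only by an extra factor $t^{1/2}/\abs{R-L}\le C$ under the time constraint is right.

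The paper takes a different route for this term precisely to stay within what \Cref{thm:Aro68} actually provides: it uses only an $L^2$ bound on $\nabla_{\by}G(\bx,\cdot;\cdot)$ over $\Delta$, obtained from the symmetry $G(\bx,t;\by)=G(\by,t;\bx)$ (so that $\by\mapsto G(\bx,t;\by)$ solves the parabolic equation away from $\bx$) and a Caccioppoli/energy estimate localised near $\Delta$, after which the right-hand side is controlled by the pointwise Nash--Aronson bound on $G$ itself. This buys robustness: the paper's argument for the $\nabla_{\by}G$ term does not rely on the Lipschitz regularity of $a$, whereas yours does. Conversely, your pointwise route is shorter once the gradient bound is available. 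Both lead to the stated estimate with the same scaling.
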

}

\subsubsection{Term $I^{2,b}$}
\add{ %
\begin{proposition}\label{prop: bound I2b}
	Let the hypotheses of \cref{lemma: preliminary estimate sup-theta NEW:)} be satisfied. Moreover, let $I^{2,b}_{ij}$ be defined as in \cref{eq: defs I2abc}, $R\ge 1$ and $T<\frac{2\nacon}{d}\abs{\tilde{R}-L}^2$.  
	Then, there exist constants $C_{2,b}>0$ and $0<c\le\nacon$ independent of $R$, $L$, $T$ such that
	\begin{equation}\label{eq:bound I2b}
	\abs{I^{2,b}_{ij}} \le C_{2,b} \frac{R^{d-1}}{T^{d/2}} e^{-c\frac{\abs{R-L}^2}{T}}.
	\end{equation}
\end{proposition}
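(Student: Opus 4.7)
The plan is to decouple the two factors $v^i$ and $\theta^j$ in the spatial integral, bound each with the tools already at our disposal---the exponential time decay of $\norm{v^i(\cdot,t)}_{L^2(K)}$ from \cref{lemma:exponential decay}, and the pointwise bound on $\theta^j$ on $K_L$ from \cref{lemma: preliminary estimate sup-theta NEW:)}---and then use the hypothesis $T<\tfrac{2\nacon}{d}\abs{\tilde R - L}^2$ to argue that the resulting time integrand is dominated by its value at the endpoint $t=T$.

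For the spatial step, I factor out $\theta^j$ in $L^\infty(K_L)$ to obtain
\begin{equation*}
\abs{I^{2,b}_{ij}} \le \int_0^T \norm{\theta^j(\cdot,t)}_{L^\infty(K_L)} \int_{K_L} \abs{v^i(\bx,t)}\,\mu_L(\bx)\,d\bx\,dt,
\end{equation*}
and bound the inner integral by Cauchy--Schwarz, using $\int_{K_L}\mu_L=1$ and $\norm{\mu_L}_{L^\infty(K_L)}\le C L^{-d}$ (standard for $\mu_L\in\mathbb{K}^q(K_L)$). The $K$-periodicity of $v^i$ yields $\norm{v^i(\cdot,t)}_{L^2(K_L)}^2 \le C L^d \norm{v^i(\cdot,t)}_{L^2(K)}^2$, so that
\begin{equation*}
\int_{K_L}\abs{v^i(\bx,t)}\,\mu_L(\bx)\,d\bx
\le \left(\norm{\mu_L}_{L^\infty(K_L)}\right)^{1/2}\,\norm{v^i(\cdot,t)}_{L^2(K_L)}
\le C\,\norm{v^i(\cdot,t)}_{L^2(K)}.
\end{equation*}

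Plugging in \cref{lemma:exponential decay} and \cref{lemma: preliminary estimate sup-theta NEW:)} gives
\begin{equation*}
\abs{I^{2,b}_{ij}} \le C R^{d-1}\int_0^T e^{-\lambda_0 t}\left(1+\frac{C_2 t}{\abs{R-L}}\right)\frac{1}{t^{d/2}}\,e^{-c\abs{R-L}^2/t}\,dt.
\end{equation*}
A direct computation,
\begin{equation*}
\frac{d}{dt}\!\left[t^{-d/2}e^{-c\abs{R-L}^2/t}\right]
= t^{-d/2-2}e^{-c\abs{R-L}^2/t}\!\left(c\abs{R-L}^2 - \tfrac{d}{2}t\right),
\end{equation*}
shows that $t\mapsto t^{-d/2}e^{-c\abs{R-L}^2/t}$ is increasing on $[0,\tfrac{2c}{d}\abs{R-L}^2]$. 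Using $c\le\nacon$ and $\abs{\tilde R-L}\le\abs{R-L}$ together with the hypothesis $T<\tfrac{2\nacon}{d}\abs{\tilde R - L}^2$, one can choose $c$ close enough to $\nacon$ (allowing also for the margin $L<R-2$) so that $T\le \tfrac{2c}{d}\abs{R-L}^2$; the factor is therefore increasing on $[0,T]$ and can be pulled out at $t=T$:
\begin{equation*}
\abs{I^{2,b}_{ij}} \le \frac{C R^{d-1}}{T^{d/2}}e^{-c\abs{R-L}^2/T}\int_0^T e^{-\lambda_0 t}\left(1+\frac{C_2 t}{\abs{R-L}}\right)dt.
\end{equation*}
Since $\abs{R-L}>2$, the remaining time integral is uniformly bounded by $\tfrac{1}{\lambda_0}+\tfrac{C_2}{2\lambda_0^2\abs{R-L}}\le C$, which yields the claim with a suitable $C_{2,b}$.

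The main technical obstacle is the monotonicity step: one has to verify that the passage from the hypothesis $T<\tfrac{2\nacon}{d}\abs{\tilde R - L}^2$ (with $\tilde R \le R-1/2$) to the condition $T\le \tfrac{2c}{d}\abs{R-L}^2$ required for the monotonicity window is possible while keeping $c$ strictly positive (and no larger than $\nacon$). This amounts to a book-keeping argument controlling the slack $\abs{R-L}-\abs{\tilde R-L}$ using $R\ge 1$ and $L<R-2$, and explains why the exponent $c$ in the statement is allowed to be strictly smaller than the Nash--Aronson constant $\nacon$.
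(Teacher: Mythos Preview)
Your proof is correct and follows essentially the same route as the paper: separate $v^i$ and $\theta^j$ via H\"older/Cauchy--Schwarz, cancel the $L$-powers using periodicity of $v^i$ and the filter scaling, insert \cref{lemma:exponential decay} and \cref{lemma: preliminary estimate sup-theta NEW:)}, and then use monotonicity of $t\mapsto t^{-d/2}e^{-c\abs{R-L}^2/t}$ to evaluate at $t=T$. The only cosmetic difference is that the paper uses the three-factor H\"older bound $\norm{v^i}_{L^2(K_L)}\norm{\theta^j}_{L^\infty(K_L)}\norm{\mu_L}_{L^2(K_L)}$ directly, whereas you first pull out $\theta^j$ in $L^\infty$ and then bound $\int_{K_L}\abs{v^i}\mu_L$ by a weighted Cauchy--Schwarz; both give the same $C\norm{v^i(\cdot,t)}_{L^2(K)}$. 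Your explicit discussion of why the hypothesis $T<\tfrac{2\nacon}{d}\abs{\tilde R-L}^2$ ensures the monotonicity window $T\le\tfrac{2c}{d}\abs{R-L}^2$ (by taking $c$ slightly below $\nacon$ and exploiting $\abs{\tilde R-L}<\abs{R-L}$) is in fact more careful than the paper, which asserts the monotonicity without spelling out this bookkeeping.
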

\begin{proof}
	Applying H\"older inequality on the space integral, we obtain:
	\begin{equation*}
	I^{2,b}_{ij} \le \int_{0}^{T} \norm{v^i(\cdot,t)}_{L^2(K_L)} \norm{\theta^j(\cdot,t)}_{L^{\infty}(K_L)} \norm{\mu_L}_{L^2(K_L)} \,dt.
	\end{equation*}
	By assumption, $\mu_L \in L^{\infty}(K_L)\subset L^2(K_L)$ with continuous inclusion, and by the definition of the filter (see \Cref{sec:notation definition}), we have
	\begin{equation*}
	\norm{\mu_L}_{L^2(K_L)} \le \abs{K_L}^{1/2} \norm{\mu_L}_{L^{\infty}(K_L)} \le C_{\mu} L^{-d/2}.	
	\end{equation*} 
	Next, we estimate $\norm{v^i(\cdot,t)}_{L^2(K_L)}$. Since $v^i$ \add{is $K$-periodic} we have, for integer $L$,
	\begin{equation*}
	\norm{v^i(\cdot,t)}_{L^2(K_L)} = L^{d/2} \norm{v^i(\cdot,t)}_{L^2(K)},
	\end{equation*} 
	while, for non-integer $L$
	\begin{equation*}
	\norm{v^i(\cdot,t)}_{L^2(K_L)} \le \lceil L\rceil^{d/2} \norm{v^i(\cdot,t)}_{L^2(K)}.
	\end{equation*} 
	Finally, we recall the exponential decay of $\norm{v^i(\cdot,t)}_{L^2(K)}$ and we derive the estimate:
	\begin{multline}\label{eq:initial bound I2b}
	I^{2,b}_{ij}
	\le L^{d/2} \norm{\nabla \cdot\left(a(\cdot)\mathbf{e}_i\right)}_{L^2(K)} \int_{0}^{T} e^{-\lambda_0 t} \norm{\theta^j(\cdot,t)}_{L^{\infty}(K_L)} \,dt \, C_{\mu} L^{-d/2} \\
	\le C_{\mu} \norm{\nabla \cdot\left(a(\cdot)\mathbf{e}_i\right)}_{L^2(K)} \int_{0}^{T} e^{-\lambda_0 t} \norm{\theta^j(\cdot,t)}_{L^{\infty}(K_L)} \,dt.
	\end{multline}
	\newadd{Since $T<\frac{2\nacon}{d}\abs{\tilde{R}-L}^2$, the assumption on $t$ of \Cref{lemma: preliminary estimate sup-theta NEW:)} are satisfied and }we can use \cref{eq: preliminary estimate sup-thet NEW:)} to bound the last integral in \cref{eq:initial bound I2b}:
	\begin{multline*}
	\int_{0}^{T} 
	e^{-\lambda_0 t} 
	\norm{\theta^j(\cdot,t)}_{L^{\infty}(K_L)} 
	\,dt \le\\
	\le 
	C_1\norm{\nabla \cdot\left(a(\cdot)\mathbf{e}_i\right)}_{L^2(K)}R^{d-1}
	\int_{0}^{T} 
	\left( 1 + C_2 \frac{t}{\abs{R-L}} \right)
	e^{-\lambda_0 t} 
	t^{-d/2}
	e^{-c\frac{\abs{R-L}^2}{t}}
	\,dt\\
	\le
	C_1\norm{\nabla \cdot\left(a(\cdot)\mathbf{e}_i\right)}_{L^2(K)}R^{d-1}
	\max_{t\in[0,T]}\left(t^{-d/2} 	e^{-c\frac{\abs{R-L}^2}{t}}\right)
	\int_{0}^{T} 
	\left( 1 + C_2 \frac{t}{\abs{R-L}} \right)
	e^{-\lambda_0 t} 
	\,dt\\
	=
	\frac{C_1}{\lambda_0}\norm{\nabla \cdot\left(a(\cdot)\mathbf{e}_i\right)}_{L^2(K)}R^{d-1}
	\left( 1 + \frac{C_2 }{\lambda_0\abs{R-L}} \right)
	\max_{t\in[0,T]}\left(t^{-d/2} 	e^{-c\frac{\abs{R-L}^2}{t}}\right).
	\end{multline*}
	From the fact that $t^{-d/2} e^{-c\frac{\abs{R-L}^2}{t}}$ is strictly increasing in $\left( 0,T\right)$, with $T<\frac{2\nacon}{d}\abs{\tilde{R}-L}^2$, we have
	 $$
	 \max_{t\in[0,T]}\left(t^{-d/2} e^{-c\frac{\abs{R-L}^2}{t}}\right) = T^{-d/2} 	e^{-c\frac{\abs{R-L}^2}{T}}.
	 $$
	Then, we get \cref{eq:bound I2b} by posing 
	\begin{equation*}
		\begin{aligned}
		C_{2,b} &= \frac{C_{\mu}C_1}{\lambda_0}\left( 1 + \frac{C_2 }{\lambda_0\abs{R-L}} \right) \norm{\nabla \cdot\left(a(\cdot)\mathbf{e}_i\right)}_{L^2(K)}^2\\
		&\le \frac{C_{\mu}C_1}{\lambda_0}\left( 1 + \frac{C_2 }{\lambda_0(1-k_o)} \right) \norm{\nabla \cdot\left(a(\cdot)\mathbf{e}_i\right)}_{L^2(K)}^2,
		\end{aligned}
	\end{equation*}
since $R\ge 1$.
\end{proof}
}
%
\subsubsection{Term $I^{2,c}$}
Here, we provide estimates for the term $I^{2,c}_{ij}$ of \cref{eq: defs I2abc}. This term decays faster than $I^{2,b}_{ij}$ and can be considered negligible. 
\add{
	\begin{proposition}\label{prop: bound I2c}
		Let the hypotheses of \cref{lemma: preliminary estimate sup-theta NEW:)} be satisfied. Moreover, let $I^{2,c}_{ij}$ be defined as in \cref{eq: defs I2abc} and $T<\frac{2\nacon}{ d}\abs{\tilde{R}-L}^2$. If $\tilde{C}_{2,c}>0$ is a constant independent of $T$, $R$ and $L$, such that $T<\tilde{C}_{2,c} (R-L)$, then there exist constants $C_{2,c}>0$ and $0< c \le \nu $ independent of $R$, $L$, $T$ such that
		\begin{equation}\label{eq:bound I2c}
			\abs{I^{2,c}_{ij}} \le C_{2,c} \frac{R^{2(d-1)}}{T^{d}}e^{-2c\frac{\abs{R-L}^2}{T}}.
		\end{equation}
	\end{proposition}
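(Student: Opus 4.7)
The plan mirrors the bookkeeping from \Cref{prop: bound I2b}, except that both factors in $I^{2,c}_{ij}$ are the boundary error function $\theta$; this makes the exponential decay from \Cref{lemma: preliminary estimate sup-theta NEW:)} enter \emph{quadratically}, which explains the exponent $-2c\abs{R-L}^2/T$ on the right-hand side of \eqref{eq:bound I2c} and the fact that $I^{2,c}$ is asymptotically dominated by $I^{2,b}$. First I would apply H\"older's inequality on $K_L$ together with the unit $L^1$-mass of any filter $\mu_L \in \mathbb{K}^q(K_L)$ to obtain
\begin{equation*}
\abs{I^{2,c}_{ij}} \le \int_0^T \norm{\theta^i(\cdot,t)}_{L^\infty(K_L)}\,\norm{\theta^j(\cdot,t)}_{L^\infty(K_L)}\,dt,
\end{equation*}
and then substitute the pointwise estimate \eqref{eq: preliminary estimate sup-thet NEW:)} into each factor. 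This produces, up to a multiplicative constant involving $\norm{\nabla\cdot(a\mathbf{e}_i)}_{L^2(K)}\norm{\nabla\cdot(a\mathbf{e}_j)}_{L^2(K)}$, an integrand dominated by $(1 + C_2 t/\abs{R-L})^2\, R^{2(d-1)}\, t^{-d}\, e^{-2c\abs{R-L}^2/t}$.

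The two hypotheses on $T$ now enter in a complementary fashion. The hypothesis $T < \tilde{C}_{2,c}(R-L)$, absent from \Cref{prop: bound I2b}, is used exclusively to control the polynomial prefactor: on $(0,T)$ it guarantees $(1 + C_2 t/\abs{R-L})^2 \le (1 + C_2 \tilde{C}_{2,c})^2$, a constant independent of $R, L, T$. The companion hypothesis $T < \frac{2\nu}{d}\abs{\tilde{R}-L}^2$ (together with $c \le \nu$) ensures — by a direct derivative computation — that $t\mapsto t^{-d} e^{-2c\abs{R-L}^2/t}$ is monotone increasing on $(0,T]$, so the integrand attains its maximum at $t=T$. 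Integrating over $[0,T]$ then gives a bound of the form $T\cdot T^{-d} e^{-2c\abs{R-L}^2/T}$, and the excess factor $T$ can be absorbed into the exponential at the cost of replacing $c$ by a slightly smaller constant still lying in $(0,\nu]$, using that $\abs{R-L}^2/T$ grows at least linearly in $R-L$ under $T<\tilde{C}_{2,c}(R-L)$. Collecting the deterministic constants into $C_{2,c}$ yields \eqref{eq:bound I2c}.

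The main obstacle — producing a pointwise bound on $\theta^i$ in the interior averaging box $K_L$ that decouples the Gaussian tail at distance $\abs{R-L}$ from the volume of the boundary layer $\Delta$ where the source term $F$ is supported — has already been dispatched by \Cref{lemma: preliminary estimate sup-theta NEW:)} through the Nash--Aronson estimate on the fundamental solution. What remains in \Cref{prop: bound I2c} is therefore essentially bookkeeping, with the only delicate point being the joint choice of the two thresholds on $T$: one to tame the polynomial growth of the Duhamel term in $\theta$, the other to place $T$ within the monotone regime of the Gaussian-type integrand.
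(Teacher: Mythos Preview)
Your proof is correct and follows essentially the same route as the paper: reduce to $\int_0^T \norm{\theta^i(\cdot,t)}_{L^\infty(K_L)}^2\,dt$ via the unit mass of $\mu_L$, square the pointwise bound from \Cref{lemma: preliminary estimate sup-theta NEW:)}, and then control the resulting time integral. The only difference is bookkeeping in the last step. The paper estimates $\int_0^T t^{-d}e^{-2c|R-L|^2/t}\,dt$ directly by $\tfrac{1}{2c}\tfrac{T^2}{|R-L|^2}\,T^{-d}e^{-2c|R-L|^2/T}$ and then uses $T<\tilde{C}_{2,c}(R-L)$ to bound both the Duhamel prefactor $(1+C_2T/|R-L|)^2$ \emph{and} the surplus $T^2/|R-L|^2$, keeping the exponent $c$ from \Cref{lemma: preliminary estimate sup-theta NEW:)} unchanged. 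You instead borrow the monotonicity argument from the proof of \Cref{prop: bound I2b} to get a surplus factor $T$, and then absorb it into the Gaussian at the price of shrinking $c$. Both are valid; the paper's version has the minor advantage of not altering the exponent, while yours reuses the exact mechanism already established for $I^{2,b}$.
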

	\begin{proof}
			From the positivity of $\mu_L$ and the fact that it has unit mass, we derive the inequality
		\begin{equation*} 
				\abs{\int_{0}^{T} \int_{K_L} \theta^i(\bx,t) \theta^j(\bx,t) \mu_L(\bx) \,d\bx \,dt}
				\le  \max\limits_{i} \int_{0}^{T} \norm{\theta^i(\cdot,t)}^2_{L^{\infty}(K_L)} \,dt.
		\end{equation*} 
	From \Cref{lemma: preliminary estimate sup-theta NEW:)} the term on the right-hand side can be estimated by:
	\begin{multline*}
		\int_{0}^{T}  \norm{\theta^i(\cdot,t)}^2_{L^{\infty}(K_L)} \,dt \le\\
		\le C_1^2 \norm{\nabla \cdot\left(a(\cdot)\mathbf{e}_i\right)}^2_{L^2(K)}
		\int_{0}^{T} 
		\left( 1 + C_2 \frac{t}{\abs{R-L}} \right)^2
		\frac{R^{2(d-1)}}{t^{d}}
		e^{-2c\frac{\abs{R-L}^2}{t}}\,dt \\
		\le\frac{ C_1^2}{2c} \norm{\nabla \cdot\left(a(\cdot)\mathbf{e}_i\right)}^2_{L^2(K)}
		\left( 1 + C_2 \frac{T}{\abs{R-L}} \right)^2
		\frac{T^2}{\abs{R-L}^2}
		\frac{R^{2(d-1)}}{T^d} 
		e^{-2c\frac{\abs{R-L}^2}{T}},
	\end{multline*}
	hence \eqref{eq:bound I2c} follows by defining 
	\begin{multline*}
		C_{2,c}:= \frac{ C_1^2}{2c} \norm{\nabla \cdot\left(a(\cdot)\mathbf{e}_i\right)}^2_{L^2(K)}
		\left( 1 + C_2 \frac{T}{\abs{R-L}} \right)^2 \frac{T^2}{\abs{R-L}^2}\\
		\le \frac{ C_1^2}{2c} \norm{\nabla \cdot\left(a(\cdot)\mathbf{e}_i\right)}^2_{L^2(K)}
		\left( 1 + C_2 \tilde{C}_{2,c} \right)^2 \tilde{C}_{2,c} ^2.
	\end{multline*}
	\end{proof}
}

Now, we are ready to prove \cref{thm: modelling error parabolic correctors filter}.
\begin{proof}[Theorem \ref{thm: modelling error parabolic correctors filter}]
The decomposition \cref{eq:decomposition} implies 
$$
\norm{a^{0}_{R,L,T} - a^0}_{F} \le d^2 \max\limits_{i,j} \left( \abs{I^1_{ij}} + \abs{I^2_{ij}} + \abs{I^3_{ij}} + \abs{I^4_{ij}}\right).
$$
By using the upper bounds in \cref{lemma: I_1,lemma: I_3,lemma: I_4,prop: bound I2b,prop: bound I2c} in the above inequality we get
\begin{equation}\label{eq:final estimate 1}
\begin{aligned}%
\norm{a^{0}_{R,L,T} - a^0}_{F}
\le C \left[
L^{-(q + 1)} + e^{-2 \lambda_0 T} 
\add{+ \frac{R^{d-1}}{T^{d/2}}e^{-c\frac{\abs{R-L}^2}{T}}
+ \frac{R^{2(d-1)}}{T^{d}}e^{-2c\frac{\abs{R-L}^2}{T}}}
\right],
\end{aligned}%
\end{equation}
for some constant $C$ independent of $R$, $L$ and $T$.
Using the optimal values $L=k_oR$ and $T=k_T(R-L)$, with $0<k_o<1$ and $k_T = \sqrt{\frac{c}{2\lambda_0}}$, we write \eqref{eq:final estimate 1} as:
\begin{multline*}
\norm{a^{0}_{R,L,T} - a^0}_{F}
\le C \Bigg[
R^{-(q + 1)} + e^{-\sqrt{2\lambda_0c} (1-k_o)R} 
\add{ + R^{d/2 -1} e^{-\sqrt{2\lambda_0c} (1-k_o)R}  }\\
\add{ + R^{d-2} e^{-2 \sqrt{2\lambda_0c} (1-k_o)R} }
\Bigg]
\end{multline*}
The last term is of higher order than the third one, so it can be omitted. Finally, we get
\begin{equation}\label{eq:final estimate 1bis}
\norm{a^{0}_{R,L,T} - a^0}_{F} \le C 
\left[R^{-(q + 1)} + 
\left( 
1 +
\add{R^{d/2-1}}
\right)
e^{-\sqrt{2\lambda_0c} (1-k_o)R}\right].
\end{equation}

\medskip

%
\end{proof}


\section{Numerical tests}\label{sec:numerical experiments}
In this section we present several numerical tests which support the theoretical results of \Cref{sec:analysis}  and experimentally verify the resonance error bound of \cref{thm: modelling error parabolic correctors filter}. 
We \corr{illustrate} the expected convergence rates by varying the regularity parameter $q$ of the filters, in a periodic, smooth setting, as rigorously proven in the previous sections. Additionally, we compare the convergence rate of the resonance error for the parabolic scheme with that of 
\add{the regularised elliptic method described in Ref. \refcite{Glo11}}.
We also test non-smooth periodic and non-periodic coefficients (a realization of a log-normal field) to test our algorithm beyond the periodic setting and for which we obtain good results as in the smooth periodic case.

In order to numerically assess the convergence rate of the resonance error, we compute the approximations of the homogenized tensor through the described parabolic cell problems on domains of increasing size, $R\in[1,20]$, and calculate the resonance error as the Frobenius norm of the difference between the numerical approximation of $a^0_{R,L,T}$ (here still denoted as $a^0_{R,L,T}$) and the exact $a^0$:
\begin{equation}\label{eq: reserr}
err = \norm{a^0_{R,L,T} - a^0}_F.
\end{equation} 
In the case of periodic coefficients whose homogenized value could not be known exactly (i.e., without discretization error) the reference value is computed by solving the standard elliptic micro problem \cref{eq:periodic micro}
and periodic boundary conditions and using formula \cref{eq:a0}. 
In the random setting no approximation is available without some resonance error. 
In this case, we \corr{take as reference value for the homogenized tensor the one computed from the numerical approximation of the parabolic correctors over the largest domain $R_{max}=20$.}

To compute a numerical approximation of $a^{0}_{R,L,T}$, we use a Finite Elements (FE) discretization for the micro problems \cref{eq:parabolic dirichlet problem} in space, and a stabilised explicit Runge-Kutta method with adaptive time stepping for the time discretization. 
The fourth order explicit stabilised method ROCK4 , Ref. \refcite{Abd02}, with fine tolerance is chosen when we want to kill the discretization error in time, while the second order explicit stabilised method ROCK2, Ref. \refcite{AbM01}, with tolerances adapted to the spatial discretization error is chosen when we test the efficiency of the parabolic approach. 
As we use explicit methods in time, we need a mass matrix that is cheap to invert. This is achieve by using either mass lumping (for low order FEMs) or discontinuous Galerkin methods (for arbitrary order FEMs).

As a second step, the upscaled tensor is approximated by a double integration in space and time. \corr{The spatial integral of the parabolic correctors is computed by using} the FE filtered mass matrix of components
\begin{equation*}
m_{ij} = \int_{K_L} \phi_i(\bx) \phi_j(\bx) \mu_L(\bx) \,d\bx,
\end{equation*}
where $\{\phi_i\}_i$ are the FE basis functions.
The integration in time is performed by the use of a Newton-Cotes formula for non-uniform discretizations\add{, in particular the Simpson's rule}.

In order to optimize the convergence rate of the error with respect to the sampling domain size $R$, we take the optimal values for the averaging domain size $L$ ($K_L\subset K_R$) and for the final time $T$ given by 
\begin{equation}
	\label{eq:choice L T}
	L = k_o R, \text{ and} \quad \add{T = \sqrt{\frac{1}{4\pi^2\alpha\beta}}\abs{R-L}},
\end{equation}	 
where \add{$\alpha$ and} $\beta$ are the \add{ellipticity and} continuity constants of the tensor $a$. \add{This choice for $T$ comes from the optimal value given in \Cref{thm: modelling error parabolic correctors filter}, with the lower bound for $\lambda_0\ge \frac{\alpha \pi^2}{diam(K)^2}$ (see \Cref{remark: value lambda0}). For the practical estimation of the constant $c$ we use $c = (4\beta)^{-1}$ (this value is derived in Ref. \refcite{AKM19}, Chapter 8, Section 5 for unbounded domains). Note that $\beta$ is computed as $\beta = \norm{a}_{L^{\infty}(\R^d)}$. The value of $diam(K)=\sqrt{2}$ is used to derive \cref{eq:choice L T}, as we assigned the periodic cell to be $(-1/2,1/2)^2$. In practical situations where $diam(K)$ is not known exactly, it can often be roughly estimated.} 
The oversampling ratio, $0<k_o<1$, and the order of filters, $q$, can be chosen freely.
\subsection{Two-dimensional periodic case}\label{subsec:numerical 2d smooth periodic}
We consider the upscaling of the $2\times 2$ isotropic tensor:
\begin{equation}\label{eq: tensor GloriaLeBris}
a(\bx) = \left( \frac{2 + 1.8 \sin(2\pi x_1)}{2 + 1.8 \cos(2\pi x_2)} + \frac{2 + \sin(2\pi x_2)}{2 + 1.8 \cos(2\pi x_1)} \right)\textrm{Id}  
\end{equation}
for which the homogenized tensor is
\begin{equation*}
a^0 \approx
\begin{pmatrix}
2.757 & -0.002\\
-0.002 & 3.425
\end{pmatrix} .
\end{equation*}
Here, we compare the performances of the described parabolic approach (``par.'' in the legends), the standard elliptic approach (``ell.'' in the legends) \add{and the regularised elliptic approach of Ref. \refcite{Glo11}}. 
In comparing the 
\add{different} methods, we used a filtered version of \eqref{eq:bar_a0_Intro}, namely
\begin{equation}
	\mathbf{e}_i \cdot a^{0}_{R,L} \mathbf{e}_j := \int_{K_L} \mathbf{e}_i \cdot a(\bx) \left( \mathbf{e}_j + \nabla \chi_R^j(\bx) \right) \mu_L(\bx)\,d\bx, 
\end{equation}
\corr{that improves the error constant for the classical approach. However, we recall that} the standard elliptic method provides a first order convergence rate, independently of the use of oversampling or filtering, as shown in Ref. \refcite{YuE07}. 
By contrast, the use of high order filters in the parabolic \add{and regularised elliptic} schemes improves the convergence rate without affecting the computational cost. 

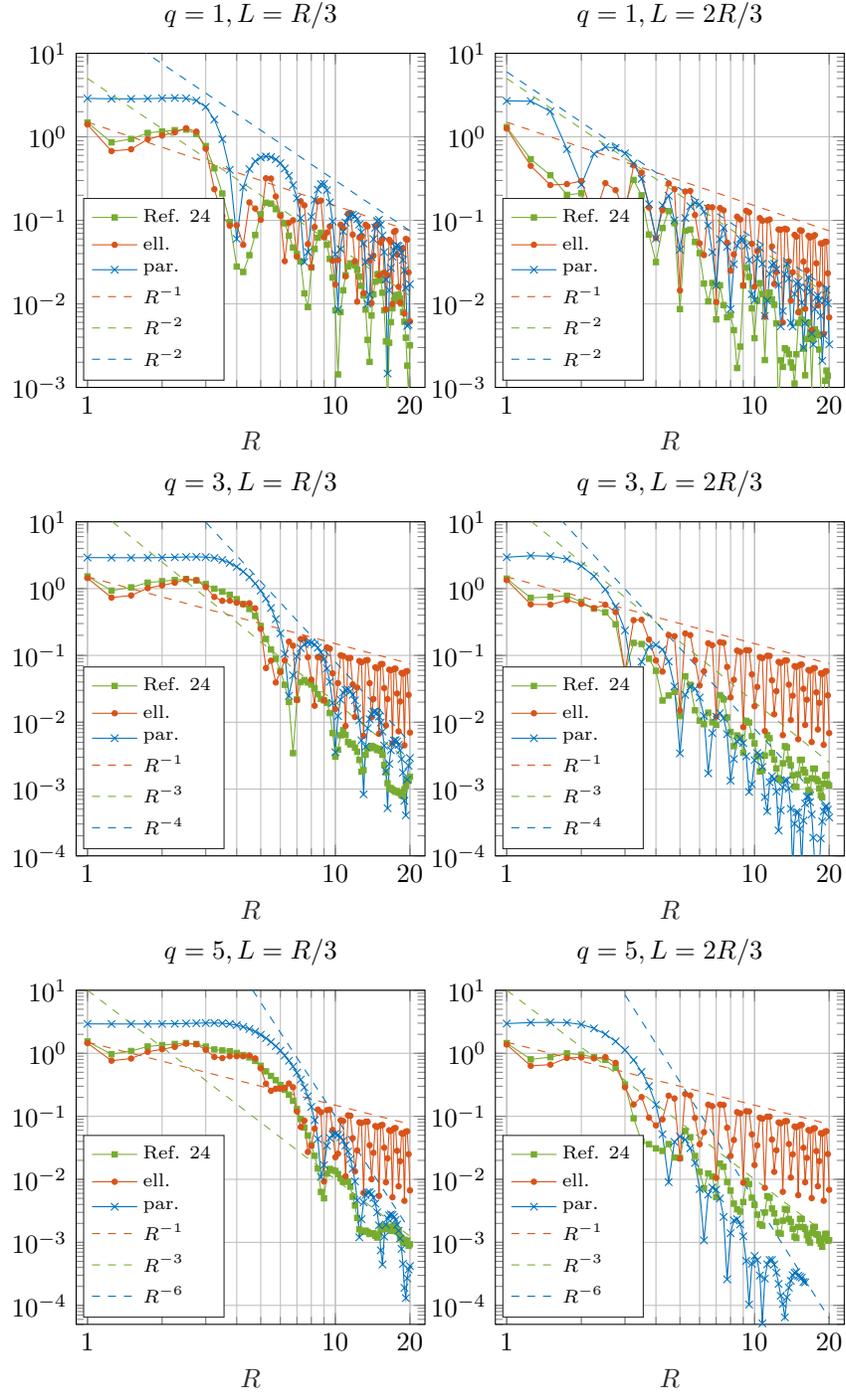
\begin{figure}
	\centering
	\begin{subfigure}{.45\textwidth}
%
%
\definecolor{mycolor1}{rgb}{0.85098,0.32549,0.09804}%
\definecolor{mycolor2}{rgb}{0.00000,0.44706,0.74118}%
\definecolor{mycolor3}{rgb}{0.46667,0.67451,0.18824}%
\begin{tikzpicture}

\begin{axis}[%
width=.85\linewidth,
height=1.75in,
scale only axis,
xmode=log,
xmin=.9,
xmax=23,
xtick={1, 10, 20},
xticklabels={1, 10, 20},
minor xtick={1,2,3,4,5,6,7,8,9,10,20},
xlabel style={font=\color{white!15!black}},
xlabel={$R$},
ymode=log,
ymin=1e-03,
ymax=10,
yminorticks=true,
axis background/.style={fill=white},
title={$q=1, L = R/3$},
xmajorgrids,
xminorgrids,
ymajorgrids,
legend style={font = \scriptsize, legend cell align=left, align=left, draw=white!15!black,at={(0.02,0.02)},anchor=south west }
]

\addplot [color=mycolor3, mark size=1.0pt, mark=square*, mark options={solid, fill=mycolor3, mycolor3}]
table[row sep=crcr]{%
	1	1.48304858858517\\
	1.25	0.858856824420499\\
	1.5	0.943361716110429\\
	1.75	1.1128646318186\\
	2	1.15984432605648\\
	2.25	1.20124982262663\\
	2.5	1.22053583314424\\
	2.75	1.08308574598526\\
	3	0.77567353930962\\
	3.25	0.418852357833339\\
	3.5	0.209583205185018\\
	3.75	0.0956297495720021\\
	4	0.028003322709267\\
	4.25	0.0240988007101022\\
	4.5	0.0382884956518259\\
	4.75	0.0667068130141423\\
	5	0.118368442292016\\
	5.25	0.162827588864628\\
	5.5	0.158720866218248\\
	5.75	0.13605295872319\\
	6	0.116533951026455\\
	6.25	0.092286006012209\\
	6.5	0.0655965570225135\\
	6.75	0.047323663401962\\
	7	0.0395291413010995\\
	7.25	0.0323049139642803\\
	7.5	0.0133587182637502\\
	7.75	0.00917169504792652\\
	8	0.0272948778038346\\
	8.25	0.0453899674461602\\
	8.5	0.0619170115656071\\
	8.75	0.0711000576528569\\
	9	0.0665941653917333\\
	9.25	0.0509640221343735\\
	9.5	0.0345611694640113\\
	9.75	0.0206397089827285\\
	10	0.0083744673664169\\
	10.25	0.00143004800499613\\
	10.5	0.00797126835709024\\
	10.75	0.0145448013668715\\
	11	0.0220310393897051\\
	11.25	0.0280987791317462\\
	11.5	0.030622514406389\\
	11.75	0.0302617618002137\\
	12	0.0288905897577256\\
	12.25	0.0255787186151764\\
	12.5	0.0211836630696374\\
	12.75	0.0164674354792968\\
	13	0.0126213244017655\\
	13.25	0.00854663238834757\\
	13.5	0.00371085712562731\\
	13.75	0.00202214311406181\\
	14	0.00724744326578973\\
	14.25	0.0129995951731239\\
	14.5	0.0183946386923089\\
	14.75	0.0225509288042927\\
	15	0.0228045316917797\\
	15.25	0.0191737443498556\\
	15.5	0.0135831639964263\\
	15.75	0.00837551917188971\\
	16	0.0035502054186627\\
	16.25	0.000505646096959137\\
	16.5	0.00342332657271756\\
	16.75	0.00604245185281777\\
	17	0.00884985260294999\\
	17.25	0.0109798625865586\\
	17.5	0.0124673099205944\\
	17.75	0.0128085390653877\\
	18	0.0127183687925333\\
	18.25	0.0115810364269018\\
	18.5	0.0100672547659932\\
	18.75	0.00797163220166792\\
	19	0.00611405374096785\\
	19.25	0.00393780109485855\\
	19.5	0.00182332582421652\\
	19.75	0.00080971151301347\\
	20	0.00319070544850068\\
};
\addlegendentry{Ref. \refcite{Glo11}}

\addplot [color=mycolor1, mark size=1.0pt, mark=*, mark options={solid, fill=mycolor1, mycolor1}]
  table[row sep=crcr]{%
1	1.40619109562964\\
1.25	0.674657146158381\\
1.5	0.710204297429784\\
1.75	0.935653580060475\\
2	1.0357959640597\\
2.25	1.13028761054714\\
2.5	1.26778024010669\\
2.75	1.15380860498848\\
3	0.719637858891809\\
3.25	0.236063313497818\\
3.5	0.112204935194693\\
3.75	0.0865065390129017\\
4	0.08728767537572\\
4.25	0.0509450576955498\\
4.5	0.163996181654066\\
4.75	0.138414521204661\\
5	0.101394619691568\\
5.25	0.319135876130245\\
5.5	0.316580140271763\\
5.75	0.19308276792426\\
6	0.113515464986025\\
6.25	0.0325258852723912\\
6.5	0.0935129834201454\\
6.75	0.100324915965569\\
7	0.036660221466891\\
7.25	0.169347811450028\\
7.5	0.151268666263082\\
7.75	0.0519540492346947\\
8	0.027515090618861\\
8.25	0.0832644960233152\\
8.5	0.168208521060748\\
8.75	0.172298503036765\\
9	0.0627525135411405\\
9.25	0.0714022017372483\\
9.5	0.0847063641222406\\
9.75	0.0329639910899711\\
10	0.0170628144535195\\
10.25	0.0335635904869293\\
10.5	0.0886823627855895\\
10.75	0.0806604584958567\\
11	0.0213655329118775\\
11.25	0.119574290305631\\
11.5	0.122533111443517\\
11.75	0.0675225447711821\\
12	0.032454662470527\\
12.25	0.0106286606261416\\
12.5	0.0623141601140278\\
12.75	0.0638723745547512\\
13	0.0134377921601834\\
13.25	0.0885727229407801\\
13.5	0.084691167648967\\
13.75	0.0330402219510255\\
14	0.0102026569149643\\
14.25	0.0376929929813594\\
14.5	0.0857501663515227\\
14.75	0.0876052740543824\\
15	0.0219674319220181\\
15.25	0.0541812314466615\\
15.5	0.0597581789070327\\
15.75	0.0236732404836015\\
16	0.00857731153901795\\
16.25	0.0220809104905289\\
16.5	0.0584734765956983\\
16.75	0.05486512461523\\
17	0.00995829135726453\\
17.25	0.0726905784894806\\
17.5	0.0749052140788359\\
17.75	0.0389059082695535\\
18	0.0160334984338971\\
18.25	0.0103482331684888\\
18.5	0.0457329421430909\\
18.75	0.0463289677637356\\
19	0.00763465881353891\\
19.25	0.0598636358009294\\
19.5	0.0586774837117626\\
19.75	0.0238506618361213\\
20	0.00616011394296699\\
};
\addlegendentry{ell.}

\addplot [color=mycolor2, mark=x, mark options={solid, mycolor2}]
table[row sep=crcr]{%
	1	2.88032268639381\\
	1.25	2.8578227152736\\
	1.5	2.84888278747283\\
	1.75	2.86421230071756\\
	2	2.89681184699705\\
	2.25	2.92162948230245\\
	2.5	2.89073828694902\\
	2.75	2.71633201017295\\
	3	2.28320254512093\\
	3.25	1.61471265701826\\
	3.5	0.938105133809764\\
	3.75	0.402301776466359\\
	4	0.0599903151814594\\
	4.25	0.248562472026544\\
	4.5	0.411270818830364\\
	4.75	0.511289642643707\\
	5	0.564952059806721\\
	5.25	0.582643752540652\\
	5.5	0.57129562298554\\
	5.75	0.536560524250932\\
	6	0.483607724110818\\
	6.25	0.417381472658816\\
	6.5	0.342663799996605\\
	6.75	0.263711329713154\\
	7	0.184017819408299\\
	7.25	0.106148182589976\\
	7.5	0.0317249512115377\\
	7.75	0.0417094943815774\\
	8	0.110599474579188\\
	8.25	0.175961709216267\\
	8.5	0.233327840888352\\
	8.75	0.272029075042864\\
	9	0.273575860517768\\
	9.25	0.230263595160246\\
	9.5	0.163818358970491\\
	9.75	0.0969562566279923\\
	10	0.0391317232080748\\
	10.25	0.00857413257671297\\
	10.5	0.0451362125994061\\
	10.75	0.0734816308226788\\
	11	0.0943651855712777\\
	11.25	0.108382974409379\\
	11.5	0.115840367622264\\
	11.75	0.117093668199221\\
	12	0.112586984456029\\
	12.25	0.103005602466243\\
	12.5	0.089205798406636\\
	12.75	0.0721400681314681\\
	13	0.0527439861499135\\
	13.25	0.0317939701163182\\
	13.5	0.00999184449722641\\
	13.75	0.0128923156253428\\
	14	0.0362051322064179\\
	14.25	0.0596733707835999\\
	14.5	0.0816475667675346\\
	14.75	0.09806635514442\\
	15	0.101716770541303\\
	15.25	0.0886307688970786\\
	15.5	0.065860131010025\\
	15.75	0.0416309392357127\\
	16	0.0196382486761485\\
	16.25	0.00146559520417779\\
	16.5	0.0145664232321028\\
	16.75	0.0269922350377316\\
	17	0.036710756804667\\
	17.25	0.0437782329382029\\
	17.5	0.0482210263858976\\
	17.75	0.0499852344633207\\
	18	0.0491706694739406\\
	18.25	0.0459609023713128\\
	18.5	0.0406454783900729\\
	18.75	0.0335724861872352\\
	19	0.0251319180724091\\
	19.25	0.0156577804887128\\
	19.5	0.00544792595040343\\
	19.75	0.00562273694939462\\
	20	0.0171657650295987\\
};
\addlegendentry{par.}

\addplot [color=mycolor1, dashed]
  table[row sep=crcr]{%
1	1.5\\
20	0.075\\
};
\addlegendentry{$R^{-1}$}

\addplot [color=mycolor3, dashed]
table[row sep=crcr]{%
	1	5\\
	20	0.0125\\
};
\addlegendentry{$R^{-2}$}

\addplot [color=mycolor2, dashed]
  table[row sep=crcr]{%
1   30\\
20	0.075\\
};
\addlegendentry{$R^{-2}$}

\end{axis}
\end{tikzpicture}%
	\end{subfigure}
	\begin{subfigure}{.45\textwidth}
%
%
\definecolor{mycolor1}{rgb}{0.85098,0.32549,0.09804}%
\definecolor{mycolor2}{rgb}{0.00000,0.44706,0.74118}%
\definecolor{mycolor3}{rgb}{0.46667,0.67451,0.18824}%
\begin{tikzpicture}

\begin{axis}[%
width=.85\linewidth,
height=1.75in,
scale only axis,
xmode=log,
xmin=.9,
xmax=23,
xtick={1, 10, 20},
xticklabels={1, 10, 20},
minor xtick={1,2,3,4,5,6,7,8,9,10,20},
xlabel style={font=\color{white!15!black}},
xlabel={$R$},
ymode=log,
ymin=1e-03,
ymax=10,
yminorticks=true,
axis background/.style={fill=white},
title={$q=1, L = 2R/3$},
xmajorgrids,
xminorgrids,
ymajorgrids,
legend style={font = \scriptsize, legend cell align=left, align=left, draw=white!15!black,at={(0.02,0.02)},anchor=south west }
]

\addplot [color=mycolor3, mark size=1.0pt, mark=square*, mark options={solid, fill=mycolor3, mycolor3}]
table[row sep=crcr]{%
	1	1.30703235660541\\
	1.25	0.540671977572887\\
	1.5	0.347151354131193\\
	1.75	0.202093324856024\\
	2	0.210779857438291\\
	2.25	0.0807914651864034\\
	2.5	0.0914501832508923\\
	2.75	0.0411574949053817\\
	3	0.153890958360785\\
	3.25	0.304751722118306\\
	3.5	0.199257594833263\\
	3.75	0.0678110359190927\\
	4	0.031643900589295\\
	4.25	0.0809196531970569\\
	4.5	0.12898344176342\\
	4.75	0.0924871545250898\\
	5	0.00866680893408591\\
	5.25	0.0750817276193714\\
	5.5	0.0775449317765619\\
	5.75	0.0551087864061405\\
	6	0.0404982010020126\\
	6.25	0.0175136490430332\\
	6.5	0.0106989949244626\\
	6.75	0.0177869952374328\\
	7	0.00652154889302917\\
	7.25	0.0166915090559399\\
	7.5	0.00966739927488489\\
	7.75	0.0069189279306414\\
	8	0.00740022003387602\\
	8.25	0.00485158639529968\\
	8.5	0.00170693983240225\\
	8.75	0.00308048355370693\\
	9	0.0165561976438418\\
	9.25	0.0273085280490583\\
	9.5	0.0209970799155252\\
	9.75	0.00881805740807302\\
	10	0.00388396829600947\\
	10.25	0.0109182051263059\\
	10.5	0.017726140421927\\
	10.75	0.0131181679342943\\
	11	0.00212462218168327\\
	11.25	0.0114806023287794\\
	11.5	0.0136430322569116\\
	11.75	0.0113645520682519\\
	12	0.00975415405191534\\
	12.25	0.00582623751272837\\
	12.5	0.0005384016020264\\
	12.75	0.00210850227425837\\
	13	0.00176044556872688\\
	13.25	0.00293528267376997\\
	13.5	0.00280514595088686\\
	13.75	0.00249964744326001\\
	14	0.00199136325034511\\
	14.25	0.00172735563598735\\
	14.5	0.00111431962380365\\
	14.75	0.00249053206542175\\
	15	0.00591476800621042\\
	15.25	0.00812550680232361\\
	15.5	0.00646598454115011\\
	15.75	0.00290585145603143\\
	16	0.00139485590831119\\
	16.25	0.00380888673344816\\
	16.5	0.00596149015869244\\
	16.75	0.00442120602727326\\
	17	0.000973371063095817\\
	17.25	0.00376790091650541\\
	17.5	0.00495039640307946\\
	17.75	0.00450947609998555\\
	18	0.00424935964634326\\
	18.25	0.00284027421125989\\
	18.5	0.00097515901530491\\
	18.75	0.000488656399564658\\
	19	0.000829835213890506\\
	19.25	0.00120184162189146\\
	19.5	0.00159001076111774\\
	19.75	0.00137904181339902\\
	20	0.000901218274154357\\
};
\addlegendentry{Ref. \refcite{Glo11}}

\addplot [color=mycolor1, mark size=1.0pt, mark=*, mark options={solid, fill=mycolor1, mycolor1}]
  table[row sep=crcr]{%
1	1.25907553778113\\
1.25	0.450040353169005\\
1.5	0.264365319190342\\
1.75	0.272017000942328\\
2	0.295980919430859\\
2.25	0.106490468353775\\
2.5	0.279131821495914\\
2.75	0.23003291730919\\
3	0.124860688742488\\
3.25	0.455150383865544\\
3.5	0.369639131966222\\
3.75	0.142581675865515\\
4	0.061316330971224\\
4.25	0.132059616703507\\
4.5	0.2769937934146\\
4.75	0.236097156982674\\
5	0.0144832816291603\\
5.25	0.221377448680942\\
5.5	0.225052672889655\\
5.75	0.115834186705192\\
6	0.0530819845466711\\
6.25	0.0384167569007387\\
6.5	0.144149042615842\\
6.75	0.143128005404981\\
7	0.0105315809789373\\
7.25	0.140513046741654\\
7.5	0.131766946555637\\
7.75	0.0540836409839441\\
8	0.0249422217586781\\
8.25	0.0391923190407052\\
8.5	0.110617646554687\\
8.75	0.102430571122861\\
9	0.0165997036879923\\
9.25	0.130188034908825\\
9.5	0.124776909778034\\
9.75	0.0524311661478698\\
10	0.016045737006667\\
10.25	0.0430517722557689\\
10.5	0.10637069165008\\
10.75	0.0990040940046772\\
11	0.00703373683174112\\
11.25	0.0999872075343142\\
11.5	0.102575260275974\\
11.75	0.0485714078326802\\
12	0.017323779541212\\
12.25	0.0241053318513305\\
12.5	0.0777552382727551\\
12.75	0.0773366107221505\\
13	0.00598685131619237\\
13.25	0.0793670680538808\\
13.5	0.0778870553457152\\
13.75	0.0323058706209186\\
14	0.0109299217642724\\
14.25	0.0239228917835964\\
14.5	0.0670507394405822\\
14.75	0.0639245213901345\\
15	0.00794498809771737\\
15.25	0.0761189830650821\\
15.5	0.075203642766148\\
15.75	0.0320381974732089\\
16	0.00872580703355161\\
16.25	0.0252288050799958\\
16.5	0.0653083999556678\\
16.75	0.0623060094106738\\
17	0.00484286630237255\\
17.25	0.0646934368424984\\
17.5	0.0665468210462258\\
17.75	0.0306226296540405\\
18	0.00959080693737778\\
18.25	0.0172224625142368\\
18.5	0.053087733950815\\
18.75	0.0528497125942039\\
19	0.00436807939657807\\
19.25	0.0554499171673234\\
19.5	0.0553159711865845\\
19.75	0.023223528090399\\
20	0.00688323327162958\\
};
\addlegendentry{ell.}

\addplot [color=mycolor2, mark=x, mark options={solid, mycolor2}]
  table[row sep=crcr]{%
1	2.69191204349882\\
1.25	2.67228493682958\\
1.5	2.01925683760912\\
1.75	0.711963782101301\\
2	0.267788152132169\\
2.25	0.625431851892859\\
2.5	0.752776938287267\\
2.75	0.738530178659733\\
3	0.634660803850525\\
3.25	0.480474474103293\\
3.5	0.313748504457981\\
3.75	0.156325132726446\\
4	0.0607472285110975\\
4.25	0.151559338810484\\
4.5	0.19708788301312\\
4.75	0.0998884288612033\\
5	0.0444614691517024\\
5.25	0.106752534816826\\
5.5	0.147571503970901\\
5.75	0.162886061943493\\
6	0.15454141390188\\
6.25	0.126845574418821\\
6.5	0.0867100459112479\\
6.75	0.0415057547105125\\
7	0.0166482370288636\\
7.25	0.059536922405922\\
7.5	0.0815470615215207\\
7.75	0.0482758826852867\\
8	0.00868098639169664\\
8.25	0.0298183908562659\\
8.5	0.0499779844803187\\
8.75	0.0599883627516203\\
9	0.0596468431733869\\
9.25	0.0499548913350065\\
9.5	0.0334274736259845\\
9.75	0.0129973757638773\\
10	0.0111040434310791\\
10.25	0.0348984834954146\\
10.5	0.046948886286816\\
10.75	0.0302352672256674\\
11	0.00718309923615202\\
11.25	0.0109564079711684\\
11.5	0.0228978086865593\\
11.75	0.0294513082579619\\
12	0.0302689932449706\\
12.25	0.0256820155973894\\
12.5	0.0168850898794992\\
12.75	0.00538231763500974\\
13	0.00822781627012121\\
13.25	0.0226472993028183\\
13.5	0.0301206701782839\\
13.75	0.0201516752968502\\
14	0.00591494113332732\\
14.25	0.00534286500394683\\
14.5	0.0131201297263648\\
14.75	0.0175890317356252\\
15	0.0184289659545931\\
15.25	0.01581100292388\\
15.5	0.0103776979139831\\
15.75	0.00299841308716373\\
16	0.00587050553416504\\
16.25	0.0154573964363862\\
16.5	0.0206010606002872\\
16.75	0.0140190337766058\\
17	0.00442054299279373\\
17.25	0.00325639634347229\\
17.5	0.0086949129624312\\
17.75	0.0118928838523444\\
18	0.012626573207927\\
18.25	0.0109187485998866\\
18.5	0.00720419104945996\\
18.75	0.00207992256002248\\
19	0.00419000978638787\\
19.25	0.0110589143626232\\
19.5	0.0148034655563171\\
19.75	0.0101537075596771\\
20	0.00326833620975826\\
};
\addlegendentry{par.}

\addplot [color=mycolor1, dashed]
  table[row sep=crcr]{%
1	1.5\\
20	0.075\\
};
\addlegendentry{$R^{-1}$}

\addplot [color=mycolor3, dashed]
table[row sep=crcr]{%
	1	5\\
	20	0.0125\\
};
\addlegendentry{$R^{-2}$}

\addplot [color=mycolor2, dashed]
  table[row sep=crcr]{%
1   6\\
20	0.015\\
};
\addlegendentry{$R^{-2}$}

\end{axis}
\end{tikzpicture}%
	\end{subfigure}	
	\begin{subfigure}{.45\textwidth}
%
%
\definecolor{mycolor1}{rgb}{0.85098,0.32549,0.09804}%
\definecolor{mycolor2}{rgb}{0.00000,0.44706,0.74118}%
\definecolor{mycolor3}{rgb}{0.46667,0.67451,0.18824}%
\begin{tikzpicture}

\begin{axis}[%
width=.85\linewidth,
height=1.75in,
scale only axis,
xmode=log,
xmin=.9,
xmax=23,
xtick={1, 10, 20},
xticklabels={1, 10, 20},
minor xtick={1,2,3,4,5,6,7,8,9,10,20},
xlabel style={font=\color{white!15!black}},
xlabel={$R$},
ymode=log,
ymin=1e-04,
ymax=10,
yminorticks=true,
axis background/.style={fill=white},
title={$q=3, L = R/3$},
xmajorgrids,
xminorgrids,
ymajorgrids,
legend style={font = \scriptsize, legend cell align=left, align=left, draw=white!15!black,at={(0.02,0.02)},anchor=south west }
]

\addplot [color=mycolor3, mark size=1.0pt, mark=square*, mark options={solid, fill=mycolor3, mycolor3}]
table[row sep=crcr]{%
	1	1.52305518090729\\
	1.25	0.93424494133082\\
	1.5	1.04359934967177\\
	1.75	1.22865677947391\\
	2	1.28734950626895\\
	2.25	1.34247193640998\\
	2.5	1.38971688176963\\
	2.75	1.32567776109541\\
	3	1.16674732646012\\
	3.25	0.994036785258021\\
	3.5	0.899696985266346\\
	3.75	0.808850816756644\\
	4	0.693961804701806\\
	4.25	0.588278090424894\\
	4.5	0.495231538309016\\
	4.75	0.390447791732288\\
	5	0.275504995733238\\
	5.25	0.17646008083734\\
	5.5	0.122769547880096\\
	5.75	0.0889961886181333\\
	6	0.058495068440531\\
	6.25	0.0363887916662509\\
	6.5	0.0200965591195947\\
	6.75	0.00344531609988374\\
	7	0.0219192103489637\\
	7.25	0.0397889859299573\\
	7.5	0.0428021115919268\\
	7.75	0.0394007113149532\\
	8	0.0366514798612469\\
	8.25	0.0319861512210002\\
	8.5	0.0262442829845288\\
	8.75	0.0223180778101603\\
	9	0.0212924680726812\\
	9.25	0.019568206058084\\
	9.5	0.0137387792578637\\
	9.75	0.00687668471202904\\
	10	0.00304529220885533\\
	10.25	0.00388812760171917\\
	10.5	0.00637644360992221\\
	10.75	0.00758512069340406\\
	11	0.00646384300913471\\
	11.25	0.00504591687425637\\
	11.5	0.00466167823187231\\
	11.75	0.00479783156471495\\
	12	0.004176589157873\\
	12.25	0.00361919440910051\\
	12.5	0.00272478070669144\\
	12.75	0.00181651596319274\\
	13	0.00192123842889558\\
	13.25	0.00326381790008064\\
	13.5	0.00406869700593962\\
	13.75	0.00410692825026935\\
	14	0.00442310348464054\\
	14.25	0.00425467342479764\\
	14.5	0.00411793752267878\\
	14.75	0.00383240077873216\\
	15	0.00400602930748216\\
	15.25	0.00376763964950857\\
	15.5	0.00325387022416974\\
	15.75	0.00229121150361804\\
	16	0.00178584847765537\\
	16.25	0.00122347026746166\\
	16.5	0.00102957661815447\\
	16.75	0.00103328684321473\\
	17	0.000985923868644271\\
	17.25	0.000947782628589953\\
	17.5	0.000909452854875491\\
	17.75	0.000931471580420751\\
	18	0.000885570967449248\\
	18.25	0.000875598760530768\\
	18.5	0.00080922389304488\\
	18.75	0.000787902795932845\\
	19	0.000993370938587155\\
	19.25	0.00113053116765933\\
	19.5	0.00139282078954641\\
	19.75	0.00133452675205627\\
	20	0.0015318474525467\\
};
\addlegendentry{Ref. \refcite{Glo11}}

\addplot [color=mycolor1, mark size=1.0pt, mark=*, mark options={solid, fill=mycolor1, mycolor1}]
  table[row sep=crcr]{%
1	1.43663022084111\\
1.25	0.734384719003068\\
1.5	0.788964641371725\\
1.75	1.01587512546924\\
2	1.11805954798519\\
2.25	1.2224140245601\\
2.5	1.38102330391643\\
2.75	1.3348201542983\\
3	1.05310387296556\\
3.25	0.749431712980688\\
3.5	0.65578676272883\\
3.75	0.658631380052708\\
4	0.614768029635039\\
4.25	0.582394378603333\\
4.5	0.603226618356791\\
4.75	0.506724046244167\\
5	0.248863595190394\\
5.25	0.0647911115724934\\
5.5	0.0834385043668323\\
5.75	0.0393690991174557\\
6	0.0562556195466575\\
6.25	0.0844585112514045\\
6.5	0.160989713661212\\
6.75	0.139688600246646\\
7	0.0215104131399853\\
7.25	0.176120786484896\\
7.5	0.178123707472603\\
7.75	0.0941918008868735\\
8	0.0434547132982576\\
8.25	0.0176600901455184\\
8.5	0.0943447248352325\\
8.75	0.0924875761034577\\
9	0.0212578511295095\\
9.25	0.132043651404634\\
9.5	0.126162785922249\\
9.75	0.0538536897242764\\
10	0.0157619308987861\\
10.25	0.038581779595008\\
10.5	0.100457268513218\\
10.75	0.098014139214905\\
11	0.00881309592051586\\
11.25	0.0912781502229118\\
11.5	0.091642500832302\\
11.75	0.0367509443225155\\
12	0.0120182232289469\\
12.25	0.0329930838449585\\
12.5	0.0825452031360433\\
12.75	0.0789000751779749\\
13	0.00627386090690187\\
13.25	0.0839640435284253\\
13.5	0.0852358570026281\\
13.75	0.0383878442560128\\
14	0.0118512686178174\\
14.25	0.0223013822078206\\
14.5	0.0662447017910616\\
14.75	0.0644715650797423\\
15	0.00673581360174178\\
15.25	0.0742816702134065\\
15.5	0.0743653591319728\\
15.75	0.0324084104139046\\
16	0.0089467898610299\\
16.25	0.0222291615805384\\
16.5	0.0611947009477185\\
16.75	0.0599570403144296\\
17	0.00481636284374748\\
17.25	0.062749511348761\\
17.5	0.06346664554775\\
17.75	0.0269004477258792\\
18	0.00731052800664505\\
18.25	0.0204507341631335\\
18.5	0.0546913256159826\\
18.75	0.0532474811478857\\
19	0.00451921458711567\\
19.25	0.0572936435481203\\
19.5	0.0583044628162185\\
19.75	0.0256223360565623\\
20	0.00701191405807704\\
};
\addlegendentry{ell.}

\addplot [color=mycolor2, mark=x, mark options={solid, mycolor2}]
  table[row sep=crcr]{%
1	2.91393024222153\\
1.25	2.90298098664955\\
1.5	2.89565033283202\\
1.75	2.89857503678291\\
2	2.91489140928025\\
2.25	2.94203747224747\\
2.5	2.97131991631188\\
2.75	2.98709517452142\\
3	2.96402520121976\\
3.25	2.86961182853285\\
3.5	2.68536155816025\\
3.75	2.42386330356859\\
4	2.11637105443954\\
4.25	1.79443737648823\\
4.5	1.48146594695263\\
4.75	1.19173628714965\\
5	0.932427165424872\\
5.25	0.705972626275362\\
5.5	0.512021960553457\\
5.75	0.348845233939546\\
6	0.214153043083749\\
6.25	0.10565414881387\\
6.5	0.0248204846805126\\
6.75	0.0503925507334319\\
7	0.0966983710395456\\
7.25	0.128579414174754\\
7.5	0.147535129415068\\
7.75	0.155498883814548\\
8	0.154311613007027\\
8.25	0.145560898929469\\
8.5	0.130700183629964\\
8.75	0.111074417836894\\
9	0.0881731990049686\\
9.25	0.0638661953437568\\
9.5	0.040325928567698\\
9.75	0.0193866090119931\\
10	0.00355696933150718\\
10.25	0.0123721612603916\\
10.5	0.0218103552730251\\
10.75	0.0277753385743197\\
11	0.0306457778036973\\
11.25	0.0309141268583722\\
11.5	0.0291086850085134\\
11.75	0.0257148008360543\\
12	0.0212420702901707\\
12.25	0.0161072745902258\\
12.5	0.0106966275931093\\
12.75	0.00535020817586437\\
13	0.000833647468645165\\
13.25	0.00430798362858419\\
13.5	0.00812288187802275\\
13.75	0.0111087957774812\\
14	0.0131985254135048\\
14.25	0.0143594291824337\\
14.5	0.0145536715764209\\
14.75	0.0138091539510956\\
15	0.0121728815425425\\
15.25	0.0099037542457816\\
15.5	0.00724561883422807\\
15.75	0.0044600855925506\\
16	0.00185051740901488\\
16.25	0.000516393175128905\\
16.5	0.00238167866528473\\
16.75	0.0038148858169932\\
17	0.00478566977159672\\
17.25	0.00531956522729678\\
17.5	0.00540529316448428\\
17.75	0.00514792102117933\\
18	0.0045882059005828\\
18.25	0.00379002066072904\\
18.5	0.00282121423694917\\
18.75	0.00175978294023734\\
19	0.000667612466926564\\
19.25	0.000404720569041419\\
19.5	0.0013837264527642\\
19.75	0.00223301893127241\\
20	0.00290719207044658\\
};
\addlegendentry{par.}

\addplot [color=mycolor1, dashed]
  table[row sep=crcr]{%
1	1.5\\
20	0.075\\
};
\addlegendentry{$R^{-1}$}

\addplot [color=mycolor3, dashed]
table[row sep=crcr]{%
	1	20\\
	20	0.0025\\
};
\addlegendentry{$R^{-3}$}

\addplot [color=mycolor2, dashed]
  table[row sep=crcr]{%
1   800\\
20	0.005\\
};
\addlegendentry{$R^{-4}$}

\end{axis}
\end{tikzpicture}%
	\end{subfigure}
	\begin{subfigure}{.45\textwidth}
%
%
\definecolor{mycolor1}{rgb}{0.85098,0.32549,0.09804}%
\definecolor{mycolor2}{rgb}{0.00000,0.44706,0.74118}%
\definecolor{mycolor3}{rgb}{0.46667,0.67451,0.18824}%
\begin{tikzpicture}

\begin{axis}[%
width=.85\linewidth,
height=1.75in,
scale only axis,
xmode=log,
xmin=.9,
xmax=23,
xtick={1, 10, 20},
xticklabels={1, 10, 20},
minor xtick={1,2,3,4,5,6,7,8,9,10,20},
xlabel style={font=\color{white!15!black}},
xlabel={$R$},
ymode=log,
ymin=1e-04,
ymax=10,
yminorticks=true,
axis background/.style={fill=white},
title={$q=3, L = 2R/3$},
xmajorgrids,
xminorgrids,
ymajorgrids,
legend style={font = \scriptsize, legend cell align=left, align=left, draw=white!15!black,at={(0.02,0.02)},anchor=south west }
]

\addplot [color=mycolor3, mark size=1.0pt, mark=square*, mark options={solid, fill=mycolor3, mycolor3}]
table[row sep=crcr]{%
	1	1.40991717466915\\
	1.25	0.72687452643004\\
	1.5	0.750744583148926\\
	1.75	0.782127791492547\\
	2	0.630787424331795\\
	2.25	0.503531592875639\\
	2.5	0.442516089656808\\
	2.75	0.292444658185343\\
	3	0.0557577807262799\\
	3.25	0.153970585202807\\
	3.5	0.148206689615617\\
	3.75	0.0892482353370219\\
	4	0.0588844836084294\\
	4.25	0.021128505211281\\
	4.5	0.0251157006402753\\
	4.75	0.0284411513416057\\
	5	0.0123634188197279\\
	5.25	0.0487209235144348\\
	5.5	0.0363380808207947\\
	5.75	0.0150835756903718\\
	6	0.0105812780434982\\
	6.25	0.00933193437220018\\
	6.5	0.0139135790473812\\
	6.75	0.00996438117657202\\
	7	0.00914768221236798\\
	7.25	0.0255000661126042\\
	7.5	0.0219131074611783\\
	7.75	0.0117978927483207\\
	8	0.0071672693788024\\
	8.25	0.00437774869543733\\
	8.5	0.00635163880454294\\
	8.75	0.00591074872123803\\
	9	0.0034901020656288\\
	9.25	0.0105183932327771\\
	9.5	0.00991039698997072\\
	9.75	0.00618366747344664\\
	10	0.00482291196275822\\
	10.25	0.00310016640084944\\
	10.5	0.00183849158858411\\
	10.75	0.00189309496850512\\
	11	0.00317384356450604\\
	11.25	0.00639244322875352\\
	11.5	0.00576414160625658\\
	11.75	0.00346456845304282\\
	12	0.00277024037545913\\
	12.25	0.00203621855959981\\
	12.5	0.00145238355299121\\
	12.75	0.00135887338351323\\
	13	0.0024303047331045\\
	13.25	0.0043781800338656\\
	13.5	0.00420849761156113\\
	13.75	0.00278368981857251\\
	14	0.00226854359078401\\
	14.25	0.00160827355639799\\
	14.5	0.00109248442895539\\
	14.75	0.00107549985435936\\
	15	0.00166079315838734\\
	15.25	0.0027243083525285\\
	15.5	0.00274588261341484\\
	15.75	0.00194572473287476\\
	16	0.0017734527238861\\
	16.25	0.00134906981145804\\
	16.5	0.000957201886972889\\
	16.75	0.000891079400411347\\
	17	0.00144705374993611\\
	17.25	0.00202145764030445\\
	17.5	0.00201523239745664\\
	17.75	0.00140714758408822\\
	18	0.00131354082775624\\
	18.25	0.00102253455053138\\
	18.5	0.000809301715535878\\
	18.75	0.000755151472368397\\
	19	0.00119503745971773\\
	19.25	0.00157464264574724\\
	19.5	0.00164005563108664\\
	19.75	0.00118398973708174\\
	20	0.00113117601770751\\
};
\addlegendentry{Ref. \refcite{Glo11}}

\addplot [color=mycolor1, mark size=1.0pt, mark=*, mark options={solid, fill=mycolor1, mycolor1}]
  table[row sep=crcr]{%
1	1.34156504940221\\
1.25	0.581219142922743\\
1.5	0.575128619402185\\
1.75	0.670734617458375\\
2	0.589644384796332\\
2.25	0.509156552602509\\
2.5	0.571635598917922\\
2.75	0.445552475897999\\
3	0.056145553262856\\
3.25	0.336799140506301\\
3.5	0.341225075274196\\
3.75	0.172785759421934\\
4	0.0838988770726446\\
4.25	0.0572454895487309\\
4.5	0.201857078680531\\
4.75	0.193904894915254\\
5	0.0149871314357203\\
5.25	0.213324881564269\\
5.5	0.20161630101531\\
5.75	0.0848473441805444\\
6	0.0355547317019381\\
6.25	0.0579354286315614\\
6.5	0.152140468646186\\
6.75	0.142209355975622\\
7	0.0121752456751561\\
7.25	0.157927050636306\\
7.5	0.154306129035663\\
7.75	0.0673068161604748\\
8	0.0235531698541899\\
8.25	0.0431841722532737\\
8.5	0.118148607055655\\
8.75	0.113120298799818\\
9	0.00846798298250039\\
9.25	0.119941319182637\\
9.5	0.119375078995574\\
9.75	0.0521229521406694\\
10	0.017383581279427\\
10.25	0.0337701778634147\\
10.5	0.0944720970301092\\
10.75	0.0911963853506274\\
11	0.00738743008120816\\
11.25	0.0986381337882526\\
11.5	0.0983537623499005\\
11.75	0.0423740935453786\\
12	0.0131535647336681\\
12.25	0.0291713049526786\\
12.5	0.0800994666165957\\
12.75	0.0774332532136282\\
13	0.00640914800980135\\
13.25	0.0838379425641481\\
13.5	0.0841936314273383\\
13.75	0.0365238023558777\\
14	0.0108084171355695\\
14.25	0.0249656280402829\\
14.5	0.0691391847144703\\
14.75	0.0672503047278651\\
15	0.00556666066006357\\
15.25	0.0724219023200526\\
15.5	0.073074361311986\\
15.75	0.0317001493174756\\
16	0.00911463900120728\\
16.25	0.0217557870668143\\
16.5	0.0605605917401606\\
16.75	0.0590789786598764\\
17	0.00504568945272039\\
17.25	0.0640477470438434\\
17.5	0.0647263134891237\\
17.75	0.0280102459705329\\
18	0.00780038016375042\\
18.25	0.0195099148442885\\
18.5	0.054109041463264\\
18.75	0.0528573632590349\\
19	0.00459138631994045\\
19.25	0.057436485003095\\
19.5	0.0582123329470321\\
19.75	0.0252681803885188\\
20	0.00688555113213407\\  	
};
\addlegendentry{ell.}

\addplot [color=mycolor2, mark=x, mark options={solid, mycolor2}]
  table[row sep=crcr]{%
1	2.94698436084933\\
1.25	3.1063773390963\\
1.5	3.05184551950613\\
1.75	2.76224821652847\\
2	2.18192240062822\\
2.25	1.52985301083937\\
2.5	0.968297379319691\\
2.75	0.540303252038773\\
3	0.236378837421077\\
3.25	0.0375773108768422\\
3.5	0.0804918362026736\\
3.75	0.134773332689709\\
4	0.143945219373663\\
4.25	0.122149286288635\\
4.5	0.0810602335605433\\
4.75	0.0343986278418164\\
5	0.00342028359565053\\
5.25	0.0256973568184043\\
5.5	0.0338829344728852\\
5.75	0.0317587524211076\\
6	0.0234084808272195\\
6.25	0.0124525816484361\\
6.5	0.00171857527089363\\
6.75	0.00687744985359154\\
7	0.0121705181409682\\
7.25	0.0136751230836607\\
7.5	0.011474044931212\\
7.75	0.00663324169396187\\
8	0.00133632006601774\\
8.25	0.00281287103070458\\
8.5	0.00516575700118595\\
8.75	0.00573849717135802\\
9	0.00487577239913258\\
9.25	0.00309344364144376\\
9.5	0.000906707899447274\\
9.75	0.00117719444068837\\
10	0.00271091504502465\\
10.25	0.00345534752960689\\
10.5	0.00321174649613521\\
10.75	0.00214245127472125\\
11	0.000756070455056087\\
11.25	0.000464269394509113\\
11.5	0.00126749250118685\\
11.75	0.00156036224913248\\
12	0.00141284903083519\\
12.25	0.000920132478087001\\
12.5	0.000239534994129228\\
12.75	0.00048122114517202\\
13	0.00106467486060941\\
13.25	0.00134700502863873\\
13.5	0.00136156105539445\\
13.75	0.00101336667871013\\
14	0.000506999861736234\\
14.25	6.23322583449987e-05\\
14.5	0.000303275399115303\\
14.75	0.000451482580297215\\
15	0.000446577488157455\\
15.25	0.000253228824757347\\
15.5	7.20647306818667e-05\\
15.75	0.000337990488355857\\
16	0.000608274320502111\\
16.25	0.000769924451971754\\
16.5	0.00077762067149303\\
16.75	0.000638591435985106\\
17	0.00041727841949605\\
17.25	0.000187218651311539\\
17.5	6.86580160801753e-05\\
17.75	7.5384083893643e-05\\
18	7.25798464654352e-05\\
18.25	6.78115020575904e-05\\
18.5	0.000181914590340249\\
18.75	0.000327967405715162\\
19	0.000465498696472905\\
19.25	0.000539738092283203\\
19.5	0.00055851637035616\\
19.75	0.000491882231837855\\
20	0.000376089836753202\\  	
};
\addlegendentry{par.}

\addplot [color=mycolor1, dashed]
  table[row sep=crcr]{%
1	1.5\\
20	0.075\\
};
\addlegendentry{$R^{-1}$}

\addplot [color=mycolor3, dashed]
table[row sep=crcr]{%
	1	20\\
	20	0.0025\\
};
\addlegendentry{$R^{-3}$}

\addplot [color=mycolor2, dashed]
  table[row sep=crcr]{%
1   80\\
20	0.0005\\
};
\addlegendentry{$R^{-4}$}

\end{axis}
\end{tikzpicture}%
	\end{subfigure}	
	\begin{subfigure}{.45\textwidth}
%
%
\definecolor{mycolor1}{rgb}{0.85098,0.32549,0.09804}%
\definecolor{mycolor2}{rgb}{0.00000,0.44706,0.74118}%
\definecolor{mycolor3}{rgb}{0.46667,0.67451,0.18824}%
\begin{tikzpicture}

\begin{axis}[%
width=.85\linewidth,
height=1.75in,
scale only axis,
xmode=log,
xmin=.9,
xmax=23,
xtick={1, 10, 20},
xticklabels={1, 10, 20},
minor xtick={1,2,3,4,5,6,7,8,9,10,20},
xlabel style={font=\color{white!15!black}},
xlabel={$R$},
ymode=log,
ymin=5e-05,
ymax=10,
yminorticks=true,
axis background/.style={fill=white},
title={$q=5, L = R/3$},
xmajorgrids,
xminorgrids,
ymajorgrids,
legend style={font = \scriptsize, legend cell align=left, align=left, draw=white!15!black,at={(0.02,0.02)},anchor=south west }
]

\addplot [color=mycolor3, mark size=1.0pt, mark=square*, mark options={solid, fill=mycolor3, mycolor3}]
table[row sep=crcr]{%
	1	1.53778144967737\\
	1.25	0.9665760539315\\
	1.5	1.08865092686131\\
	1.75	1.28167880371375\\
	2	1.34676261688359\\
	2.25	1.40577274047952\\
	2.5	1.45681625971465\\
	2.75	1.4059603602181\\
	3	1.27571103117979\\
	3.25	1.14938420486164\\
	3.5	1.11125980906652\\
	3.75	1.07799468245348\\
	4	1.01240104353745\\
	4.25	0.939083209807734\\
	4.5	0.860352708449728\\
	4.75	0.756455579624967\\
	5	0.633083620594653\\
	5.25	0.516766674679479\\
	5.5	0.437065436179163\\
	5.75	0.37375730764904\\
	6	0.31387377689388\\
	6.25	0.26315009893528\\
	6.5	0.219984518224721\\
	6.75	0.176124602155143\\
	7	0.13040573216672\\
	7.25	0.0908540757661536\\
	7.5	0.0664151629024471\\
	7.75	0.0497724284817874\\
	8	0.0348931726406173\\
	8.25	0.0236271437838429\\
	8.5	0.0148358806153434\\
	8.75	0.00632177443780425\\
	9	0.00499727991896783\\
	9.25	0.0123371632036866\\
	9.5	0.0146186028397935\\
	9.75	0.0140347390786818\\
	10	0.0136059186425347\\
	10.25	0.01212714429078\\
	10.5	0.0103129676648404\\
	10.75	0.00911436201233749\\
	11	0.00934626116059163\\
	11.25	0.00922053656008136\\
	11.5	0.00763656816298556\\
	11.75	0.00529249305352903\\
	12	0.00383982845894732\\
	12.25	0.00243918450159751\\
	12.5	0.00161052055336233\\
	12.75	0.00149345815057586\\
	13	0.00149116895261013\\
	13.25	0.00153342602115256\\
	13.5	0.0014665843002086\\
	13.75	0.00135729825831365\\
	14	0.00139403390357396\\
	14.25	0.00139882157735864\\
	14.5	0.00127021051806709\\
	14.75	0.00119305438340642\\
	15	0.00132998640655597\\
	15.25	0.00159255339114179\\
	15.5	0.00181399583456687\\
	15.75	0.00164852289677234\\
	16	0.00177611114906928\\
	16.25	0.00162065276836118\\
	16.5	0.00163465579666184\\
	16.75	0.00152139060119703\\
	17	0.00177799005837659\\
	17.25	0.00177309624164826\\
	17.5	0.00180148545001028\\
	17.75	0.00148307240479126\\
	18	0.0014603831646175\\
	18.25	0.00120067571855395\\
	18.5	0.0011576495209308\\
	18.75	0.000984013350050475\\
	19	0.00109979638056309\\
	19.25	0.00101763048488954\\
	19.5	0.00106869261838327\\
	19.75	0.000879700104587318\\
	20	0.000932372331395383\\
};
\addlegendentry{Ref. \refcite{Glo11}}

\addplot [color=mycolor1, mark size=1.0pt, mark=*, mark options={solid, fill=mycolor1, mycolor1}]
  table[row sep=crcr]{%
1	1.45107310932514\\
1.25	0.762659965635439\\
1.5	0.825743116099335\\
1.75	1.05604218087363\\
2	1.15946505994696\\
2.25	1.26497242431464\\
2.5	1.42504969712291\\
2.75	1.38790912375697\\
3	1.13226919845808\\
3.25	0.875824256293475\\
3.5	0.839918520124978\\
3.75	0.898474585865183\\
4	0.899477749261431\\
4.25	0.895852757484313\\
4.5	0.924265656951975\\
4.75	0.829128046252268\\
5	0.576405595647951\\
5.25	0.328182048062141\\
5.5	0.25393697321479\\
5.75	0.273666820918522\\
6	0.275042322889833\\
6.25	0.28443223459254\\
6.5	0.331938374508637\\
6.75	0.289030697297087\\
7	0.119356852440026\\
7.25	0.0676784796945037\\
7.5	0.0853599027582132\\
7.75	0.0272038860686937\\
8	0.0342564641047653\\
8.25	0.0637861513441202\\
8.5	0.127881867625622\\
8.75	0.116054034857778\\
9	0.00925088124874197\\
9.25	0.125015486634135\\
9.5	0.126936618050634\\
9.75	0.060847936208253\\
10	0.022417801446319\\
10.25	0.025512716662723\\
10.5	0.0867776082102045\\
10.75	0.0843071220204191\\
11	0.0112100372075667\\
11.25	0.103558434060492\\
11.5	0.102180275861835\\
11.75	0.0450129693592092\\
12	0.0133789501044281\\
12.25	0.0287390871612102\\
12.5	0.080263451868337\\
12.75	0.0782635067001474\\
13	0.00607284726088959\\
13.25	0.0819274365570988\\
13.5	0.0822396980287552\\
13.75	0.0347566326534732\\
14	0.0100473535294677\\
14.25	0.0263866766734699\\
14.5	0.069995536364455\\
14.75	0.0678387867833568\\
15	0.00546365349545143\\
15.25	0.0720518426122002\\
15.5	0.0729272500175814\\
15.75	0.0317725323098519\\
16	0.00899772095214353\\
16.25	0.021623800949457\\
16.5	0.0601757122809323\\
16.75	0.0586439895978298\\
17	0.00519180076847877\\
17.25	0.0643602694305769\\
17.5	0.0650599585228756\\
17.75	0.0283693338121204\\
18	0.00775533741087036\\
18.25	0.0194247550476332\\
18.5	0.0539540686573465\\
18.75	0.0527971689118265\\
19	0.00455773035662372\\
19.25	0.057186826090581\\
19.5	0.0579568291451752\\
19.75	0.0250764484063984\\
20	0.00667999837873815\\
};
\addlegendentry{ell.}

\addplot [color=mycolor2, mark=x, mark options={solid, mycolor2}]
  table[row sep=crcr]{%
1	2.92689918700008\\
1.25	2.92148018931026\\
1.5	2.91729152271756\\
1.75	2.91839345238233\\
2	2.92794985423701\\
2.25	2.94676168739398\\
2.5	2.97270162903708\\
2.75	3.00077789461538\\
3	3.02285926408836\\
3.25	3.02684317704309\\
3.5	2.99773754965795\\
3.75	2.92315570585742\\
4	2.79903102502194\\
4.25	2.63032170507331\\
4.5	2.4277516095798\\
4.75	2.20393130753409\\
5	1.9707178029958\\
5.25	1.73784461865993\\
5.5	1.51254788642073\\
5.75	1.29979022483313\\
6	1.10268902055527\\
6.25	0.922945379846626\\
6.5	0.761214495827834\\
6.75	0.617449464179344\\
7	0.491149957709648\\
7.25	0.381480533402268\\
7.5	0.287382779322156\\
7.75	0.207743949809105\\
8	0.141358181406247\\
8.25	0.0870510811329436\\
8.5	0.0436753706502956\\
8.75	0.0109583605484765\\
9	0.0168055932182112\\
9.25	0.0340787306424638\\
9.5	0.0452739658061752\\
9.75	0.0511385072543934\\
10	0.0526442116875861\\
10.25	0.0508565034459488\\
10.5	0.0466832257662351\\
10.75	0.0409732791778276\\
11	0.0344164980371921\\
11.25	0.0275896067258869\\
11.5	0.0209222931377109\\
11.75	0.0147567885536807\\
12	0.0092774831458656\\
12.25	0.00465039711742496\\
12.5	0.00119996493657583\\
12.75	0.00233418174326731\\
13	0.00431124767905381\\
13.25	0.00563142133463415\\
13.5	0.00621347521343969\\
13.75	0.00626572426063853\\
14	0.00586829405481552\\
14.25	0.00511342460437621\\
14.5	0.00412615247663652\\
14.75	0.00299920354552248\\
15	0.00186179438201511\\
15.25	0.00074740930567144\\
15.5	0.000445065818841266\\
15.75	0.00123350676936554\\
16	0.00191316582794097\\
16.25	0.00239162654688259\\
16.5	0.00267235092143333\\
16.75	0.002776934700948\\
17	0.00272028373194317\\
17.25	0.00252187465310548\\
17.5	0.00226211168477179\\
17.75	0.00191702769452665\\
18	0.0015369283698097\\
18.25	0.00115206675010363\\
18.5	0.000788036125229282\\
18.75	0.000456446282044692\\
19	0.000189972774964211\\
19.25	0.000130652106140774\\
19.5	0.000268618957384343\\
19.75	0.000370380402744389\\
20	0.00042048664662784\\
};
\addlegendentry{par.}

\addplot [color=mycolor1, dashed]
  table[row sep=crcr]{%
1	1.5\\
20	0.075\\
};
\addlegendentry{$R^{-1}$}

\addplot [color=mycolor3, dashed]
table[row sep=crcr]{%
	1	10\\
	20	0.00125\\
};
\addlegendentry{$R^{-3}$}

\addplot [color=mycolor2, dashed]
  table[row sep=crcr]{%
1   100000\\
20	0.0015625\\
};
\addlegendentry{$R^{-6}$}

\end{axis}
\end{tikzpicture}%
	\end{subfigure}
	\begin{subfigure}{.45\textwidth}
%
%
\definecolor{mycolor1}{rgb}{0.85098,0.32549,0.09804}%
\definecolor{mycolor2}{rgb}{0.00000,0.44706,0.74118}%
\definecolor{mycolor3}{rgb}{0.46667,0.67451,0.18824}%
\begin{tikzpicture}

\begin{axis}[%
width=.85\linewidth,
height=1.75in,
scale only axis,
xmode=log,
xmin=.9,
xmax=23,
xtick={1, 10, 20},
xticklabels={1, 10, 20},
minor xtick={1,2,3,4,5,6,7,8,9,10,20},
xlabel style={font=\color{white!15!black}},
xlabel={$R$},
ymode=log,
ymin=5e-05,
ymax=10,
yminorticks=true,
axis background/.style={fill=white},
title={$q=5, L = 2R/3$},
xmajorgrids,
xminorgrids,
ymajorgrids,
legend style={font = \scriptsize, legend cell align=left, align=left, draw=white!15!black,at={(0.02,0.02)},anchor=south west }
]

\addplot [color=mycolor3, mark size=1.0pt, mark=square*, mark options={solid, fill=mycolor3, mycolor3}]
table[row sep=crcr]{%
	1	1.45149529661479\\
	1.25	0.801168728951878\\
	1.5	0.867735342708877\\
	1.75	0.994665384650648\\
	2	0.948218954694382\\
	2.25	0.867236302536218\\
	2.5	0.784647811706122\\
	2.75	0.593501825776606\\
	3	0.319748780965076\\
	3.25	0.0930451478270539\\
	3.5	0.0402483313412846\\
	3.75	0.0361641899549752\\
	4	0.0310000509042512\\
	4.25	0.0282457507533138\\
	4.5	0.0362659751642975\\
	4.75	0.0221698807233511\\
	5	0.0218173765598374\\
	5.25	0.0584410722401673\\
	5.5	0.0465899790381184\\
	5.75	0.0235034560710715\\
	6	0.0131626252829559\\
	6.25	0.00720466695734999\\
	6.5	0.0124058400892296\\
	6.75	0.0111459418873193\\
	7	0.00521587167048509\\
	7.25	0.0178217439255619\\
	7.5	0.0158209812231162\\
	7.75	0.00909746847304649\\
	8	0.00661818585801532\\
	8.25	0.0041171236483177\\
	8.5	0.00333920196035421\\
	8.75	0.00307389791886763\\
	9	0.00437165216868103\\
	9.25	0.00945718183696895\\
	9.5	0.00843007379982573\\
	9.75	0.0051121164878048\\
	10	0.00393542304679867\\
	10.25	0.00275284176405927\\
	10.5	0.00193348869400293\\
	10.75	0.00183753523606909\\
	11	0.00297946827387449\\
	11.25	0.00540980046440361\\
	11.5	0.00510722020351178\\
	11.75	0.00343941680055184\\
	12	0.0028731418847715\\
	12.25	0.00211440330534795\\
	12.5	0.00141723869290663\\
	12.75	0.0013744838241261\\
	13	0.00220793287167972\\
	13.25	0.00338264384115581\\
	13.5	0.00329249289368309\\
	13.75	0.00235358970701922\\
	14	0.0021117281908426\\
	14.25	0.00164648579107706\\
	14.5	0.00123322351083015\\
	14.75	0.0011656830227591\\
	15	0.00175580204378695\\
	15.25	0.00235769952344385\\
	15.5	0.00235274487048649\\
	15.75	0.00174484327968757\\
	16	0.00163526000279369\\
	16.25	0.00130527741758484\\
	16.5	0.00107540308972192\\
	16.75	0.000992468514627011\\
	17	0.00140213422098463\\
	17.25	0.00171530845279965\\
	17.5	0.00176651533635958\\
	17.75	0.00135021340515068\\
	18	0.0013217070548358\\
	18.25	0.00106797908430765\\
	18.5	0.000948439761128173\\
	18.75	0.000852221349592775\\
	19	0.00115164924426525\\
	19.25	0.00129948986984625\\
	19.5	0.00137393996727951\\
	19.75	0.00106502975627821\\
	20	0.00108477146355376\\
};
\addlegendentry{Ref. \refcite{Glo11}}

\addplot [color=mycolor1, mark size=1.0pt, mark=*, mark options={solid, fill=mycolor1, mycolor1}]
  table[row sep=crcr]{%
1	1.37559922266825\\
1.25	0.631907897614203\\
1.5	0.660396705841595\\
1.75	0.844736378809887\\
2	0.85793922509015\\
2.25	0.831706557106092\\
2.5	0.870647489809131\\
2.75	0.704603491705214\\
3	0.288392890684122\\
3.25	0.154457969692784\\
3.5	0.201813752963198\\
3.75	0.0954995774230498\\
4	0.0716695664492894\\
4.25	0.0895255165617995\\
4.5	0.212899595843847\\
4.75	0.192145103446339\\
5	0.0213513129159623\\
5.25	0.226215184452259\\
5.5	0.214823862107795\\
5.75	0.0942587497839496\\
6	0.0360842803786841\\
6.25	0.0559796466045435\\
6.5	0.153168795664482\\
6.75	0.145106641221001\\
7	0.0105359636523203\\
7.25	0.153821755355367\\
7.5	0.15122916782454\\
7.75	0.0659505106613681\\
8	0.0235275109479071\\
8.25	0.0421768891073765\\
8.5	0.116506971313888\\
8.75	0.11146441201262\\
9	0.0088467591277706\\
9.25	0.120592856356757\\
9.5	0.119456255270587\\
9.75	0.0516923555293623\\
10	0.0168554507396761\\
10.25	0.0346761220638227\\
10.5	0.0951152494213134\\
10.75	0.0915794286196042\\
11	0.00735085883653043\\
11.25	0.0986779126238691\\
11.5	0.0986275192972269\\
11.75	0.0427700839166393\\
12	0.0131565763844187\\
12.25	0.0289738585888185\\
12.5	0.079959975891064\\
12.75	0.0774545938276486\\
13	0.00633896435841781\\
13.25	0.083487133096789\\
13.5	0.083876195702028\\
13.75	0.0363377899033302\\
14	0.0106960958458417\\
14.25	0.0249570465710826\\
14.5	0.0689628765795455\\
14.75	0.0670451640171702\\
15	0.00561143793373458\\
15.25	0.0724592866292446\\
15.5	0.0730641796527109\\
15.75	0.0316595404686088\\
16	0.00899571427311253\\
16.25	0.0219133351214006\\
16.5	0.0606296020019421\\
16.75	0.0591173321884994\\
17	0.00503380303271656\\
17.25	0.0640012001753356\\
17.5	0.0647320575202348\\
17.75	0.0280742843127722\\
18	0.00776560814931706\\
18.25	0.0194924169361404\\
18.5	0.0540485957756139\\
18.75	0.0528294048390724\\
19	0.00457630211318116\\
19.25	0.0573284836958834\\
19.5	0.0581141732290846\\
19.75	0.0252208728882617\\
20	0.00682770782894624\\  	
};
\addlegendentry{ell.}

\addplot [color=mycolor2, mark=x, mark options={solid, mycolor2}]
  table[row sep=crcr]{%
1	2.96065043786042\\
1.25	3.07309667785524\\
1.5	3.10081821670731\\
1.75	3.06962137296763\\
2	2.86341924210699\\
2.25	2.47945606479657\\
2.5	2.01130045802253\\
2.75	1.54511320439291\\
3	1.12890392816236\\
3.25	0.782273528806163\\
3.5	0.507975080806208\\
3.75	0.30080589787174\\
4	0.152040226378583\\
4.25	0.0520021020034604\\
4.5	0.00903161946062605\\
4.75	0.039317983442921\\
5	0.0477971419818237\\
5.25	0.0426655005077773\\
5.5	0.0311168124071914\\
5.75	0.0182196361044268\\
6	0.00702721979067302\\
6.25	0.00107435741160238\\
6.5	0.00573517370847478\\
6.75	0.00741007075743779\\
7	0.00687025993327094\\
7.25	0.00497965412136498\\
7.5	0.00253124058643057\\
7.75	0.000259207819980264\\
8	0.00140028220694288\\
8.25	0.00222766833130398\\
8.5	0.00233314498440841\\
8.75	0.00192344479058927\\
9	0.00124412670571245\\
9.25	0.000508415023836305\\
9.5	0.000102900177554351\\
9.75	0.000457829120769752\\
10	0.000608400426965556\\
10.25	0.000521860071418423\\
10.5	0.000298230572009061\\
10.75	5.17874042862676e-05\\
11	0.000265990351177061\\
11.25	0.000448205301906395\\
11.5	0.000524967263025148\\
11.75	0.0005264578621253\\
12	0.00044414654628709\\
12.25	0.000330401786020955\\
12.5	0.00021942728940769\\
12.75	0.000142060940781473\\
13	0.000105089421338759\\
13.25	6.46541834352693e-05\\
13.5	0.000133968143370639\\
13.75	0.000191544530060711\\
14	0.000246494749452827\\
14.25	0.000301285357920866\\
14.5	0.000328770561338506\\
14.75	0.000337559173444639\\
15	0.000295278769071996\\
15.25	0.000285151692119151\\
15.5	0.000275174290521764\\
15.75	0.000235759566680781\\
16	0.000231219607285892\\
};
\addlegendentry{par.}

\addplot [color=mycolor1, dashed]
  table[row sep=crcr]{%
1	1.5\\
20	0.075\\
};
\addlegendentry{$R^{-1}$}

\addplot [color=mycolor3, dashed]
table[row sep=crcr]{%
	1	10\\
	20	0.00125\\
};
\addlegendentry{$R^{-3}$}

\addplot [color=mycolor2, dashed]
  table[row sep=crcr]{%
1   8000\\
20	0.0000625\\
};
\addlegendentry{$R^{-6}$}

\end{axis}
\end{tikzpicture}%
	\end{subfigure}	
	\caption{Comparison of the resonance error in the standard elliptic, regularised elliptic and parabolic models for the tensor \cref{eq: tensor GloriaLeBris}. The size of the sampling domain, $R$, is reported on the $x$-axis; the resonance error \eqref{eq: reserr} is reported on the $y$-axis.}
	\label{fig:compare}
\end{figure}

\add{All the} approaches are solved using $\mathbb{P}1$ finite element discretization in space with $64$ points per periodic cell. Mass lumping has been used in order to perform the time integration, which is carried out via the fourth order explicit stabilised ROCK4 method, see Ref. \refcite{Abd02}, with $tol=10^{-6}$ (since we want to kill the discretization error in time).
%
Finally Simpson's quadrature rule is used for computing the time integral defining homogenized coefficients.


Results are depicted in \cref{fig:compare}. As expected, one cannot reach a convergence rate higher than 1 for the standard elliptic approach, in contrast to the \add{regularised elliptic and} parabolic methods. 
\add{For low order filters ($q=1$), we observe that both the parabolic approach and the regularised elliptic approach have the same convergence rate but the latter has a smaller prefactor. As the parameter $q$ gets larger, the parabolic approach starts to result in smaller resonance errors asymptotically (as $R$ increases). The advantage of the parabolic approach over the regularised counterpart is more pronounced for larger values of $q$. This is also discussed in \Cref{sec:computational cost}, where we investigate the computational efficiency.}
\add{For all the three methods,} we notice a longer ``flat'' region in the convergence plot for small values of $k_o$ and high order filters. Intuitively, for any given $R$, the region where the filter is ``not almost zero'' decreases for smaller $k_o$ and larger $q$. Hence, we need larger values of $R$ for the averaging integral to contain enough data and the error to decrease with the expected rate.

\add{%
\begin{figure}[h!]
	\centering
	\input{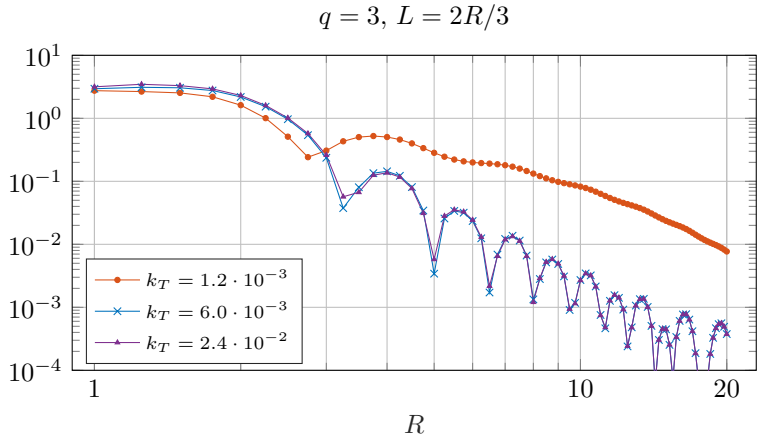}
	\caption{Effect of using different values of $k_T$. The size of the sampling domain, $R$, is reported on the $x$-axis; the resonance error \eqref{eq: reserr} is reported on the $y$-axis.}
	\label{fig:different kT}
\end{figure}
We conclude this section by showing the effect that different values of $k_T$ have on the convergence rate. In \Cref{fig:different kT}, the resonance error is plotted against the value of $R$, for the choice of $k_o=2/3$, $q=3$, and three different values for $k_T$. The results clearly show that if $k_T$ is much smaller than the optimal value then the error convergence is severely hampered (see the first plot in \Cref{fig:different kT}), so the error is dominated by the truncation error ($e^{-2\lambda_0 T}$ in \Cref{thm: modelling error parabolic correctors filter}). However, note that, as $R$ gets larger the exponentially decaying errors will be dominated by the averaging error and the asymptotic decay rate of \Cref{thm: modelling error parabolic correctors filter} will be recovered.
On the other hand, the results do not vary sensibly if larger values for $k_T$ are used, even though they are not the ``optimal'' ones, see the plots for $k_T=6.0\cdot 10^{-3}$ and  $k_T=2.4\cdot 10^{-2}$. In those cases the main contribution to the resonance error comes from the averaging error terms, which do not depend on $k_T$.
The fact that the same convergence behaviour is achieved for a large range of $k_T$ values allows to choose it with a high degree of flexibility. %
}
\subsection{\add{Non-smooth coefficients}}\label{subsec:numerical 2d nonsmooth periodic}
\add{%
In the example above, we have considered smooth coefficients. We will now present the results for an example with the non-smooth periodic coefficients:
\begin{equation}\label{eq:truncPyramids}
a(x) = \begin{cases}
		1 & x\in \mathbb{Z} + K_{1/4}, \\
		12\norm{x-[x]}_{\infty} -\frac{1}{2} & x\in \mathbb{Z} + (K_{3/4}\setminus K_{1/4}), \\
		4 & \text{elsewhere},
	\end{cases}
\end{equation}
where the components of $[x]$ are the integer values closest to $x$, component-wise.
The diffusion coefficient field is depicted in \cref{fig:truncPyramids}.
\begin{figure}[h!]
	\centering
	\includegraphics[width=.4\textwidth]{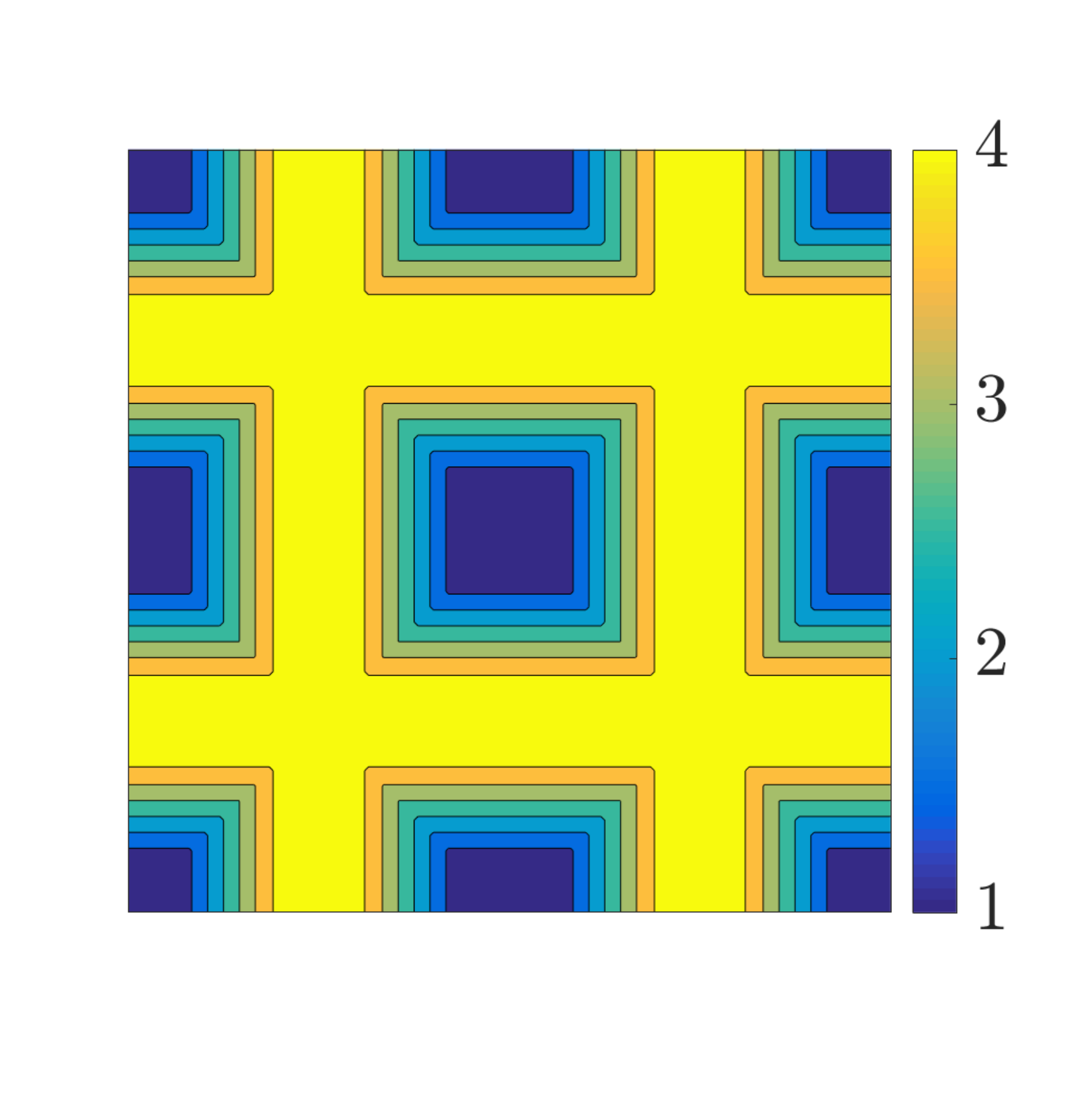}
	\caption{Visualization of the diffusion coefficient field \eqref{eq:truncPyramids}}
	\label{fig:truncPyramids}
\end{figure}
The field \eqref{eq:truncPyramids} could be considered as an approximation of a composite material made up of low-diffusive square inclusions into an homogeneous medium.
The case of discontinuous coefficients can still be tackled numerically, but an appropriate discontinuous Galerkin method needs to be implemented to get high orders in spatial discretization and thereby observe the asymptotic rate of the convergence.
Convergence plots pictured in \cref{fig:nonSmooth} demonstrates that the theoretical results also apply to the case of non-smooth coefficients. The test is done with $\mathbb{P}1$ finite element discretization on a uniform grid of size $h=1/128$ and the ROCK4 time integration scheme with $tol = 10^{-6}$. The final time $T$ has been chosen according to \eqref{eq:choice L T}.
The results agree with both the previous example and the theoretical convergence rates. As before, the effect of having a smaller ratio $L/R=k_o$ is the increase in the length of the initial ``flat'' region, indicating that larger values of $k_o$ are preferable.
The resonance error converges as $R^{-q-1}$, where $q$ is the order of the filters, independently from other parameters. 
Both the results for the regularised elliptic and parabolic methods are presented: the errors for both methods are comparable for $k_o=2/3$ and $q=2$, while the parabolic scheme shows faster decay of the error when $q=4,5$.}
\begin{figure}[h!]
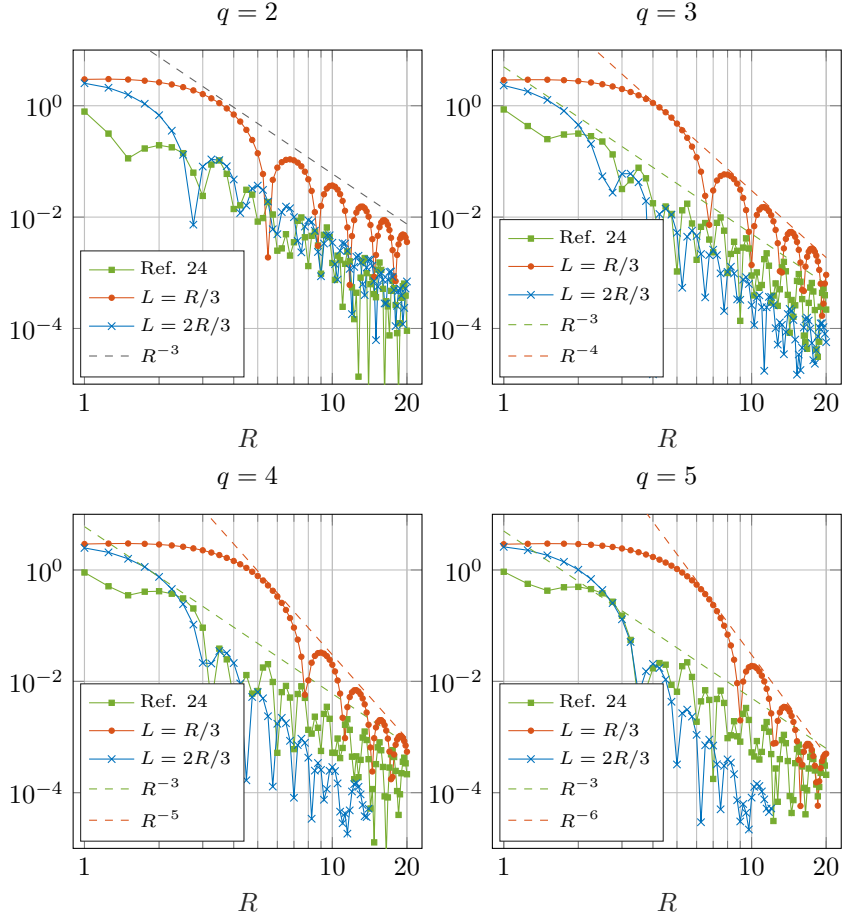

	\centering
	\begin{subfigure}[h]{.45\textwidth}
%
%
\definecolor{mycolor1}{rgb}{0.85098,0.32549,0.09804}%
\definecolor{mycolor2}{rgb}{0.00000,0.44706,0.74118}%
\definecolor{mycolor3}{rgb}{0.46667,0.67451,0.18824}%
\definecolor{mycolor4}{rgb}{0.38039,0.38039,0.38039}%
\begin{tikzpicture}
	
	\begin{axis}[%
		width=.85\linewidth,
		height=1.75in,
		scale only axis,
		xmode=log,
		xmin=.9,
		xmax=23,
		xtick={1, 10, 20},
		xticklabels={1, 10, 20},
		minor xtick={1,2,3,4,5,6,7,8,9,10,20},
		xlabel style={font=\color{white!15!black}},
		xlabel={$R$},
		ymode=log,
		ymin=1e-05,
		ymax=10,
		yminorticks=true,
		axis background/.style={fill=white},
		title={$q=2$},
		xmajorgrids,
		xminorgrids,
		ymajorgrids,
		yminorgrids,
		legend style={font=\scriptsize, legend cell align=left, align=left, draw=white!15!black,at={(0.02,0.02)},anchor=south west }
		]

\addplot [color=mycolor3, mark size=1.0pt, mark=square*, mark options={solid, fill=mycolor3, mycolor3}]
table[row sep=crcr]{%
	1	0.785879363821785\\
	1.25	0.316237945750281\\
	1.5	0.11345304116964\\
	1.75	0.170601484324546\\
	2	0.195044829925254\\
	2.25	0.17898439838605\\
	2.5	0.140812167619047\\
	2.75	0.0628918447581746\\
	3	0.0240275157202207\\
	3.25	0.0878770653754345\\
	3.5	0.104203473461307\\
	3.75	0.0597722686852201\\
	4	0.0139816668879351\\
	4.25	0.016277323973004\\
	4.5	0.0309687433659651\\
	4.75	0.0253535251926131\\
	5	0.00831723630432259\\
	5.25	0.00959549713456774\\
	5.5	0.0186635388510031\\
	5.75	0.0111413945632562\\
	6	0.0024971434903246\\
	6.25	0.002741420711113\\
	6.5	0.00492743489049628\\
	6.75	0.002037974703418\\
	7	0.00352880406115455\\
	7.25	0.00857266251758015\\
	7.5	0.0098547355035572\\
	7.75	0.00433975151642378\\
	8	0.00132141992348272\\
	8.25	0.00460535235526531\\
	8.5	0.00544658293667302\\
	8.75	0.00309230056359972\\
	9	0.000963877616180889\\
	9.25	0.00479590055561042\\
	9.5	0.00651233613963154\\
	9.75	0.00438619619645022\\
	10	0.00149982335978618\\
	10.25	0.000745717023187076\\
	10.5	0.00203533750419766\\
	10.75	0.00169097527043113\\
	11	0.000245100918091207\\
	11.25	0.00144749797914345\\
	11.5	0.00240977013278193\\
	11.75	0.00160613408019673\\
	12	0.000547976011855833\\
	12.25	0.000147010953913871\\
	12.5	0.000438775080357947\\
	12.75	1.36366121096527e-05\\
	13	0.000818244504366543\\
	13.25	0.00160176048480018\\
	13.5	0.00183978024528073\\
	13.75	0.000984934597166275\\
	14	7.11924859975213e-06\\
	14.25	0.000624105471740627\\
	14.5	0.000827758273014648\\
	14.75	0.00041469846746089\\
	15	0.000363996557253723\\
	15.25	0.0011416691805443\\
	15.5	0.00152850022673611\\
	15.75	0.00112569674119434\\
	16	0.000514819763506947\\
	16.25	5.28677970822906e-06\\
	16.5	0.000308202180314238\\
	16.75	0.00024933825211643\\
	17	7.5036710537338e-05\\
	17.25	0.000474291517329388\\
	17.5	0.000716229290265999\\
	17.75	0.000529027206263974\\
	18	0.000263517418360587\\
	18.25	8.23052559126111e-05\\
	18.5	5.392772846026e-06\\
	18.75	0.000123990688039401\\
	19	0.000355359885669025\\
	19.25	0.000573897824265698\\
	19.5	0.000639954764480402\\
	19.75	0.000389375251277475\\
	20	9.11942099537306e-05\\
};
\addlegendentry{Ref. \refcite{Glo11}}

\addplot [color=mycolor1, mark size=1.0pt, mark=*, mark options={solid, fill=mycolor1, mycolor1}]
  table[row sep=crcr]{%
1	2.97474906824926\\
1.25	3.01150053172779\\
1.5	2.95423043936651\\
1.75	2.81555055567317\\
2	2.6327508416919\\
2.25	2.41088375536959\\
2.5	2.15745435700262\\
2.75	1.88699835550928\\
3	1.61626687715983\\
3.25	1.35710606122253\\
3.5	1.11567470377502\\
3.75	0.894568004415171\\
4	0.695605821708877\\
4.25	0.520019670253342\\
4.5	0.368987550387115\\
4.75	0.242615683458913\\
5	0.140016280980241\\
5.25	0.0593657362909355\\
5.5	0.00188880278190727\\
5.75	0.046653859662887\\
6	0.0774836064486727\\
6.25	0.0967826538258492\\
6.5	0.10664229188132\\
6.75	0.10878550615491\\
7	0.104548301550618\\
7.25	0.0950994550821838\\
7.5	0.0816522959738502\\
7.75	0.0654574601456136\\
8	0.0476738672810408\\
8.25	0.0295351614229497\\
8.5	0.0122832908391842\\
8.75	0.00305750697388452\\
9	0.0158420705971201\\
9.25	0.0257132400954555\\
9.5	0.0325042361170663\\
9.75	0.0362020908309964\\
10	0.0369683195585194\\
10.25	0.0351329970181421\\
10.5	0.0312106579395314\\
10.75	0.0258035688344741\\
11	0.0195277924376579\\
11.25	0.0129456433691149\\
11.5	0.00652215382915739\\
11.75	0.000604758236870326\\
12	0.00456507443569845\\
12.25	0.00882086770393818\\
12.5	0.0120727838480706\\
12.75	0.0142708507396032\\
13	0.0153921000363443\\
13.25	0.0154500177724425\\
13.5	0.0145142494204538\\
13.75	0.012709765393081\\
14	0.0102060530578931\\
14.25	0.00722253394090716\\
14.5	0.0040183536497\\
14.75	0.000849099418085575\\
15	0.002083928248562\\
15.25	0.00461151136671832\\
15.5	0.00661447424525105\\
15.75	0.00800514298168554\\
16	0.00873238776179087\\
16.25	0.00879747859097604\\
16.5	0.0082646177583938\\
16.75	0.00724060138724579\\
17	0.00585573689658689\\
17.25	0.00424741025585317\\
17.5	0.00254678864284622\\
17.75	0.000867243181952166\\
18	0.000706947287490006\\
18.25	0.00209294396331184\\
18.5	0.00324181150442998\\
18.75	0.00410869031939166\\
19	0.00465942820209772\\
19.25	0.00487253426133106\\
19.5	0.00474606752588719\\
19.75	0.00429980833662985\\
20	0.0035752689294308\\
};
\addlegendentry{$L = R/3$}

\addplot [color=mycolor2, mark size=2.0pt, mark=x, mark options={solid, fill=mycolor2, mycolor2}]
table[row sep=crcr]{%
	1	2.53389932492851\\
	1.25	2.1031321835823\\
	1.5	1.58035942971353\\
	1.75	1.08622370880813\\
	2	0.676137668347889\\
	2.25	0.356076777132325\\
	2.5	0.131709674919168\\
	2.75	0.0073072201897454\\
	3	0.0808337737139145\\
	3.25	0.108663714803718\\
	3.5	0.105457512484043\\
	3.75	0.0818145203443968\\
	4	0.0473518193217077\\
	4.25	0.0118461700049604\\
	4.5	0.0161765811461961\\
	4.75	0.0325923783184493\\
	5	0.0368043412178898\\
	5.25	0.0308218224602808\\
	5.5	0.0190395305360782\\
	5.75	0.00604646881847119\\
	6	0.00493140419363186\\
	6.25	0.0122949973637404\\
	6.5	0.0154468113804887\\
	6.75	0.0144095189677437\\
	7	0.0099762335739821\\
	7.25	0.00374192635051339\\
	7.5	0.00232786135861162\\
	7.75	0.00676811797040907\\
	8	0.00876547103215433\\
	8.25	0.00818182987434783\\
	8.5	0.00569880868287967\\
	8.75	0.00236665892046404\\
	9	0.000866730198161361\\
	9.25	0.00335305456066872\\
	9.5	0.00470121931740432\\
	9.75	0.00471002221659727\\
	10	0.00347237242636408\\
	10.25	0.00142942878136658\\
	10.5	0.000755101346620211\\
	10.75	0.00249698696255767\\
	11	0.00341105114564855\\
	11.25	0.00334286405203705\\
	11.5	0.00247380530713181\\
	11.75	0.00117629273822638\\
	12	0.000181103819084277\\
	12.25	0.0012791746463386\\
	12.5	0.00193829803398884\\
	12.75	0.00202043234419694\\
	13	0.00152541625118475\\
	13.25	0.000623978707067456\\
	13.5	0.00039749783466575\\
	13.75	0.00124493612044772\\
	14	0.00172456097308567\\
	14.25	0.00173433640816888\\
	14.5	0.00133883700442013\\
	14.75	0.000707482385869581\\
	15	6.17749803330224e-05\\
	15.25	0.000563036489693531\\
	15.5	0.00092768935774738\\
	15.75	0.000995459437310409\\
	16	0.000752498091381953\\
	16.25	0.000281505390052083\\
	16.5	0.000281432802434253\\
	16.75	0.000753175935719761\\
	17	0.00103281967633475\\
	17.25	0.00105334814471305\\
	17.5	0.000842234285432379\\
	17.75	0.00048973349348219\\
	18	0.000110274977014472\\
	18.25	0.000255371196317484\\
	18.5	0.000474744839782264\\
	18.75	0.000523910018256933\\
	19	0.000388300783010914\\
	19.25	0.000119273627031268\\
	19.5	0.00023444526547224\\
	19.75	0.00052236586327095\\
	20	0.000698476361969905\\
};
\addlegendentry{$L = 2R/3$}

\addplot [color=mycolor4, dashed]
  table[row sep=crcr]{%
1	60\\
20	0.0075\\
};
\addlegendentry{$R^{-3}$}

\end{axis}
\end{tikzpicture}%
	\end{subfigure}
	\begin{subfigure}[h]{.45\textwidth}
		\input{"img/ALTER_nonSmooth/compareq=3Ldiff"}
	\end{subfigure}	

	\begin{subfigure}[h]{.45\textwidth}
%
%
\definecolor{mycolor1}{rgb}{0.85098,0.32549,0.09804}%
\definecolor{mycolor2}{rgb}{0.00000,0.44706,0.74118}%
\definecolor{mycolor3}{rgb}{0.46667,0.67451,0.18824}%
\definecolor{mycolor4}{rgb}{0.38039,0.38039,0.38039}%
\begin{tikzpicture}
	
	\begin{axis}[%
		width=.85\linewidth,
		height=1.75in,
		scale only axis,
		xmode=log,
		xmin=.9,
		xmax=23,
		xtick={1, 10, 20},
		xticklabels={1, 10, 20},
		minor xtick={1,2,3,4,5,6,7,8,9,10,20},
		xlabel style={font=\color{white!15!black}},
		xlabel={$R$},
		ymode=log,
		ymin=1e-05,
		ymax=10,
		yminorticks=true,
		axis background/.style={fill=white},
		title={$q=4$},
		xmajorgrids,
		xminorgrids,
		ymajorgrids,
		yminorgrids,
		legend style={font=\scriptsize, legend cell align=left, align=left, draw=white!15!black,at={(0.02,0.02)},anchor=south west }
		]
		
\addplot [color=mycolor3, mark size=1.0pt, mark=square*, mark options={solid, fill=mycolor3, mycolor3}]
  table[row sep=crcr]{%
	1	0.900916454431067\\
	1.25	0.509264120745785\\
	1.5	0.350835185975695\\
	1.75	0.407654282350716\\
	2	0.415155204252758\\
	2.25	0.374847898763255\\
	2.5	0.308214280415443\\
	2.75	0.203506794109749\\
	3	0.0919858843322193\\
	3.25	0.00399002282015104\\
	3.5	0.0386737446500871\\
	3.75	0.0248137885046809\\
	4	0.00480418665912459\\
	4.25	0.00778148682724686\\
	4.5	0.012964309712487\\
	4.75	0.00602121664904944\\
	5	0.00671338276174725\\
	5.25	0.0176931608100096\\
	5.5	0.0203261632260666\\
	5.75	0.00974058081334941\\
	6	0.000519833922752992\\
	6.25	0.00640124527695772\\
	6.5	0.00828126102253006\\
	6.75	0.00511057865743557\\
	7	0.000605996359916524\\
	7.25	0.00592411874401722\\
	7.5	0.00815043162764541\\
	7.75	0.00504076000628795\\
	8	0.00134787259667251\\
	8.25	0.00115232100253633\\
	8.5	0.0023183375985927\\
	8.75	0.00146332584779583\\
	9	0.000562766563981701\\
	9.25	0.00260245408204288\\
	9.5	0.00348754333476708\\
	9.75	0.00212411692756021\\
	10	0.0005236280964854\\
	10.25	0.000517593381244954\\
	10.5	0.000938314126190944\\
	10.75	0.00041780881582807\\
	11	0.000635829679841734\\
	11.25	0.00166220969189768\\
	11.5	0.00208638264070017\\
	11.75	0.0013451279582688\\
	12	0.000444134884124546\\
	12.25	0.000175084572014996\\
	12.5	0.000449094858528467\\
	12.75	0.000189937168123721\\
	13	0.000388663053719104\\
	13.25	0.000978548567727573\\
	13.5	0.00125397530856024\\
	13.75	0.000883014927102504\\
	14	0.00039985346612185\\
	14.25	5.20113318589733e-05\\
	14.5	0.000116382762225352\\
	14.75	1.28142798435572e-05\\
	15	0.00032603820969659\\
	15.25	0.000651467247859233\\
	15.5	0.000801918775296061\\
	15.75	0.000578090408255331\\
	16	0.000290044681317449\\
	16.25	8.59333261550286e-05\\
	16.5	7.15920796424602e-06\\
	16.75	8.22400796426341e-05\\
	17	0.00028318364114736\\
	17.25	0.000488606447219403\\
	17.5	0.000581867223005073\\
	17.75	0.000436980010853491\\
	18	0.000246461359402036\\
	18.25	0.000107054969282847\\
	18.5	3.99147235458512e-05\\
	18.75	9.3618702169136e-05\\
	19	0.000223505897304235\\
	19.25	0.000359990537795677\\
	19.5	0.000426014463476572\\
	19.75	0.000336634545426291\\
	20	0.000214566748866722\\
};
\addlegendentry{Ref. \refcite{Glo11}}
  	
  	\addplot[color=mycolor1, mark size=1.0pt, mark=*, mark options={solid, fill=mycolor1, mycolor1}] 
  	table[row sep=crcr]{%
  		1	2.92432452418902\\
  		1.25	2.97874703184443\\
  		1.5	2.98961470629122\\
  		1.75	2.94812828624641\\
  		2	2.87104040895127\\
  		2.25	2.75860351776989\\
  		2.5	2.61443723456019\\
  		2.75	2.44411481449107\\
  		3	2.25494993832527\\
  		3.25	2.05491510662576\\
  		3.5	1.85089935614559\\
  		3.75	1.64843432079256\\
  		4	1.45240915124549\\
  		4.25	1.2657860770035\\
  		4.5	1.09053572725305\\
  		4.75	0.928151364670607\\
  		5	0.77984604010426\\
  		5.25	0.646327942320648\\
  		5.5	0.527686393414567\\
  		5.75	0.423546189168509\\
  		6	0.333394265997039\\
  		6.25	0.25632240348617\\
  		6.5	0.191211693073638\\
  		6.75	0.136885658500972\\
  		7	0.0922256342143925\\
  		7.25	0.0561611274834543\\
  		7.5	0.0276562341599491\\
  		7.75	0.00574322823518855\\
  		8	0.0104029250492101\\
  		8.25	0.021584144431714\\
  		8.5	0.0285732807368465\\
  		8.75	0.0321019061733068\\
  		9	0.032846052843976\\
  		9.25	0.0314371255850231\\
  		9.5	0.0284653074715883\\
  		9.75	0.0244618631440855\\
  		10	0.0198651274021247\\
  		10.25	0.0150587468537274\\
  		10.5	0.0103627721291388\\
  		10.75	0.00602525215493961\\
  		11	0.00221788796562985\\
  		11.25	0.000956331162497223\\
  		11.5	0.00343731292213025\\
  		11.75	0.00522395911830068\\
  		12	0.00635732392123639\\
  		12.25	0.00690199049999826\\
  		12.5	0.00693834871888253\\
  		12.75	0.00655715546316298\\
  		13	0.00585535704353114\\
  		13.25	0.00492882989044236\\
  		13.5	0.00386706470011046\\
  		13.75	0.00275173961619713\\
  		14	0.00165830941106891\\
  		14.25	0.000649256829569933\\
  		14.5	0.000240539276086125\\
  		14.75	0.000952473250528211\\
  		15	0.00148715940901643\\
  		15.25	0.00183522905222459\\
  		15.5	0.00200370518420003\\
  		15.75	0.00200948772472836\\
  		16	0.0018755170817294\\
  		16.25	0.00163051442321088\\
  		16.5	0.00130631640249859\\
  		16.75	0.000935163758448571\\
  		17	0.000547567571422998\\
  		17.25	0.000174807170965028\\
  		17.5	0.000191728280188337\\
  		17.75	0.000485684445768061\\
  		18	0.000724325550239457\\
  		18.25	0.000897807469284952\\
  		18.5	0.0010039216348378\\
  		18.75	0.00104477373893021\\
  		19	0.00102596175170166\\
  		19.25	0.000955731061769414\\
  		19.5	0.000844235865927458\\
  		19.75	0.000703005220058095\\
  		20	0.000544384527366024\\
  	};
  	\addlegendentry{$L = R/3$}

\addplot [color=mycolor2, mark size=2.0pt, mark=x, mark options={solid, fill=mycolor2, mycolor2}]
table[row sep=crcr]{%
	1	2.48679072083329\\
	1.25	2.07179353570641\\
	1.5	1.59037835388893\\
	1.75	1.13430715633067\\
	2	0.751496278144204\\
	2.25	0.454747551602151\\
	2.5	0.243434153184076\\
	2.75	0.10453172091188\\
	3	0.021386930104722\\
	3.25	0.0208875829813927\\
	3.5	0.0350095842036202\\
	3.75	0.0321371687807077\\
	4	0.0212830869187369\\
	4.25	0.00916368240768589\\
	4.5	0.000167029778942755\\
	4.75	0.00523293054130354\\
	5	0.00640359589850836\\
	5.25	0.00495618098121324\\
	5.5	0.0023600892602947\\
	5.75	0.000128763605177209\\
	6	0.00173008788298852\\
	6.25	0.00220874219533317\\
	6.5	0.00177110651946684\\
	6.75	0.000855326859384464\\
	7	8.12018404039947e-05\\
	7.25	0.000716377346282868\\
	7.5	0.000932552372836631\\
	7.75	0.000786398581300585\\
	8	0.000425521696278785\\
	8.25	3.40526012734138e-05\\
	8.5	0.000249360748156991\\
	8.75	0.0003410597676943\\
	9	0.000257348220281648\\
	9.25	7.48307878514274e-05\\
	9.5	0.000116013725358204\\
	9.75	0.000247297434944231\\
	10	0.000288894932501992\\
	10.25	0.000244691265239832\\
	10.5	0.000147063284618199\\
	10.75	4.56629641907995e-05\\
	11	2.82658222623998e-05\\
	11.25	4.23510250374919e-05\\
	11.5	1.83824070449053e-05\\
	11.75	4.8164659481781e-05\\
	12	0.000105062063562816\\
	12.25	0.000142167137790178\\
	12.5	0.000148184369772014\\
	12.75	0.000127534848341919\\
	13	9.33769709543504e-05\\
	13.25	6.00836431810633e-05\\
	13.5	3.7823062976106e-05\\
	13.75	3.65525165458756e-05\\
	14	5.17107935483305e-05\\
};
\addlegendentry{$L = 2R/3$} 

\addplot [color=mycolor3, dashed]
table[row sep=crcr]{%
	1	6\\
	20	0.00075\\
};
\addlegendentry{$R^{-3}$}

\addplot [color=mycolor1, dashed]
  table[row sep=crcr]{%
1	3000\\
20	0.0009375\\
};
\addlegendentry{$R^{-5}$}


\end{axis}
\end{tikzpicture}%
	\end{subfigure}	
	\begin{subfigure}[h]{.45\textwidth}
		\input{"img/ALTER_nonSmooth/compareq=5Ldiff"}
	\end{subfigure}
	\caption{Comparison of the resonance error in the regularised elliptic and parabolic models for the non-smooth tensor \cref{eq:truncPyramids}. The size of the sampling domain, $R$, is reported on the $x$-axis; the resonance error \eqref{eq: reserr} is reported on the $y$-axis.}
	\label{fig:nonSmooth}
\end{figure}
\subsection{\add{A non-periodic case}}
In the last numerical test, we provide an example for a 
\add{non-periodic} tensor %
\add{which violates the periodicity assumption made in \Cref{sec:analysis}}.
With this test, we do not aim at proving any theoretical convergence rate of the error, but rather to verify numerically that the periodicity assumption is not necessary for achieving fast decaying rates of the boundary error.
We consider a single realization of a stationary log-normal random field with Gaussian isotropic covariance: 
\begin{equation}\label{eq: tensor log-normal}
\log a(\cdot) \sim \mathcal{N}(\mu,Cov(\bx-\by)), \quad Cov(\mathbf z) = \sigma^2 e^{-\frac{\abs{\mathbf z}^2}{2\ell^2}},
\end{equation}
where $\mu$ and $\sigma^2$ are the mean and the variance of the field and $\ell$ is the correlation length. An example of such a field is depicted in \cref{fig:normal field}.
\add{Here, the choice of $\ell=0.2$ has been made, in order to include enough oscillations in the unit cell $[-1/2, 1/2]^2$}
We are not interested in evaluating the statistical error, but only the boundary error, which is
\begin{equation*}
\norm{a^{0}_{R,L,T} - a^{0}_{\infty,L,T}}_F.
\end{equation*}
In practice, we will consider $a^{0}_{R_{max},L,T}$ for the large value $R_{max}=20$ in place of $a^{0}_{\infty,L,T}$ as a reference \corr{for evaluating the resonance error. The new reference $a^{0}_{R_{max},L,T}$ is computed using the numerical approximation of the parabolic corrector on $K_{R_{max}}$ with periodic BCs}. 
The test is done with a $\mathbb{P}1$ finite element discretization on a uniform grid of size $h=1/20$ and the ROCK4 time integration scheme with $tol = 10^{-5}$.
\corr{In \cref{fig:convergence plot stochastic} we show that the resonance error decays with a rate comprised between $3$ and $4$ with respect to $R$.}
\begin{figure}
	\centering
	\begin{subfigure}[b]{.45\textwidth}
		\includegraphics[width = 1\textwidth]{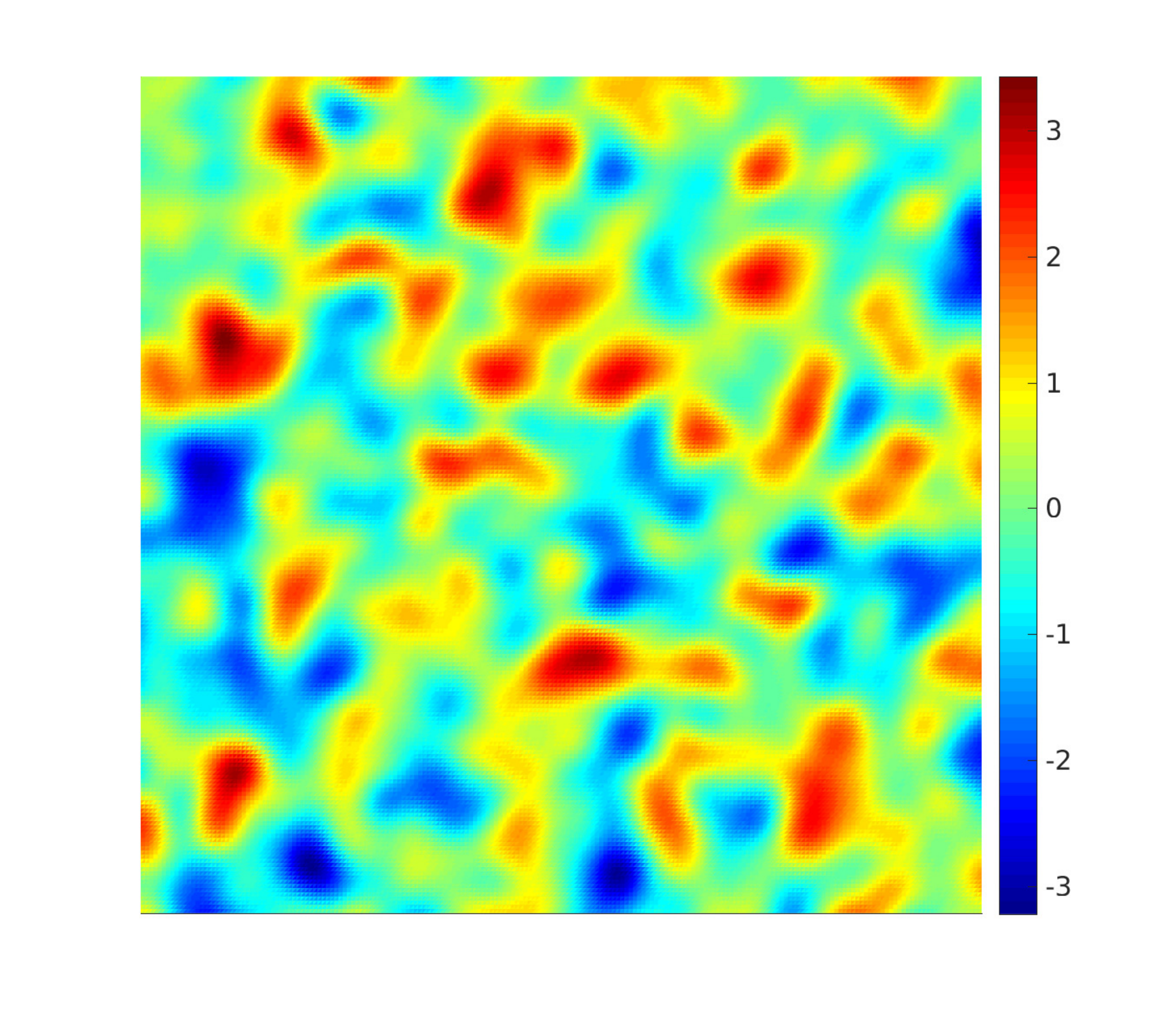}
		\subcaption{Realization of the field on the square $[-2, 2]^2$. The colour scale is logarithmic.}
		\label{fig:normal field}
	\end{subfigure}
	\begin{subfigure}[b]{.45\textwidth}
%
%
\definecolor{mycolor1}{rgb}{0.85098,0.32549,0.09804}%
\definecolor{mycolor2}{rgb}{0.00000,0.44706,0.74118}%
\definecolor{mycolor3}{rgb}{0.46667,0.67451,0.18824}%
\definecolor{mycolor4}{rgb}{0.38039,0.38039,0.38039}%
\begin{tikzpicture}

\begin{axis}[%
width=.8\linewidth,
height=.7\linewidth,
at={(-3.063in,1.18in)},
scale only axis,
xmode=log,
xmin=.9,
xmax=23,
xtick={1, 10, 20},
xticklabels={1, 10, 20},
minor xtick={1,2,3,4,5,6,7,8,9,10,20},
xlabel style={font=\color{white!15!black}},
xlabel={$R$},
ymode=log,
ymin=1e-05,
ymax=1,
yminorticks=false,
axis background/.style={fill=white},
xmajorgrids,
xminorgrids,
ymajorgrids,
legend style={legend cell align=left, font=\scriptsize, align=left, draw=white!15!black,at={(.02,.02)},anchor=south west }
]


\addplot [color=mycolor1, mark size=1.0pt, mark=*, mark options={solid, fill=mycolor1, mycolor1}]
table[row sep=crcr]{%
	1	0.446209183590206\\
	1.5	0.327157960303222\\
	2	0.0285015803466817\\
	2.5	0.0636280387060642\\
	3	0.171876785470852\\
	3.5	0.0759601734002965\\
	4	0.0195227902631044\\
	4.5	0.0289474782092297\\
	5	0.0271394211641465\\
	5.5	0.0250624999135389\\
	6	0.0142304391278437\\
	6.5	0.0113321567677394\\
	7	0.00671195726765494\\
	7.5	0.00296862515377239\\
	8	0.00446635239041404\\
	8.5	0.00335586082141209\\
	9	0.0033954849492272\\
	9.5	0.00141946050254499\\
	10	0.000504042968808571\\
	10.5	0.00129975036537144\\
	11	0.000937372754467924\\
	11.5	0.00129536454463042\\
	12	0.00154155142169671\\
	12.5	0.00243871640048221\\
	13	0.00212263814783664\\
	13.5	0.00137595744146674\\
	14	0.000967396455817116\\
	14.5	0.000852472223485223\\
	15	0.000610337278191042\\
	15.5	0.00045328515698434\\
	16	0.000214562544916308\\
	16.5	0.000424827982427101\\
	17	0.000931641819466589\\
	17.5	0.00080445412389768\\
	18	0.000280484061137676\\
	18.5	0.000183906622730878\\
	19	0.000245915260964846\\
	19.5	0.000319377297312295\\
};
\addlegendentry{$q=0$}

\addplot [color=mycolor2, mark=x, mark options={solid, mycolor2}]
  table[row sep=crcr]{%
1	0.493037722676099\\
1.5	0.319652303986697\\
2	0.0176244723968141\\
2.5	0.0375557000096753\\
3	0.14512439411506\\
3.5	0.0756078082733692\\
4	0.00363572212274466\\
4.5	0.0156049113071433\\
5	0.010041202420625\\
5.5	0.0113014227094179\\
6	0.00704860101880345\\
6.5	0.00595032967061182\\
7	0.00385529073537161\\
7.5	0.0013561599849149\\
8	0.00172937715829957\\
8.5	0.00121972959060523\\
9	0.00136214685120062\\
9.5	0.000405205764698602\\
10	7.07834242678063e-05\\
10.5	0.000537392769405121\\
11	0.000380726044853175\\
11.5	0.00045446889009626\\
12	0.000251717177081168\\
12.5	0.000497311325195704\\
13	0.000391364260979021\\
13.5	0.000165768994491999\\
14	0.000154775422255893\\
14.5	0.000153366811337874\\
15	5.82543680969771e-05\\
15.5	4.21614570056961e-05\\
16	0.000112076584826593\\
16.5	6.56490815914013e-05\\
17	0.000170927943480004\\
17.5	0.000194884460739952\\
18	9.41710840445809e-05\\
18.5	1.22988486860877e-05\\
19	6.17912395137754e-05\\
19.5	0.000130367698478628\\
};
\addlegendentry{$q=2$}

\addplot [color=mycolor1, dashed]
table[row sep=crcr]{%
	1	8\\
	20	0.001\\
};
\addlegendentry{$R^{-3}$}

\addplot [color=mycolor2, dashed]
table[row sep=crcr]{%
	1	3.2\\
	20	0.00002\\
};
\addlegendentry{$R^{-4}$}

\end{axis}
\end{tikzpicture}%
		\subcaption{Resonance error. $L=2R/3$.}
		\label{fig:convergence plot stochastic}
	\end{subfigure}	
	\caption{Log-normal random field \cref{eq: tensor log-normal} with $\mu=0$, $\sigma^2=1$ and $\ell=0.2$, and resonance error for the parabolic cell problem with filter order $q$ and final time $T=\frac{\abs{R-L}}{10}$. The size of the sampling domain, $R$, is reported on the $x$-axis; the resonance error \eqref{eq: reserr} is reported on the $y$-axis.}
\end{figure}		


\section{Computational efficiency} \label{sec:computational cost}
The goal of this section is to study the computational efficiency of the parabolic method. We will also provide theoretical and computational efficiency comparisons with the classical and the regularised elliptic methods.
Our analysis and numerical computations show that, for sufficiently high order filters, the computational cost is lower for the parabolic model than for the elliptic ones. 
\subsection{Regularised elliptic model}
Let us consider the regularised elliptic homogenization scheme described in Ref. \refcite{Glo11}, which reads as
\begin{equation}\label{eq:Gloria_eq}
\dfrac{1}{T} \psi^{i}_{R,T} - \nabla \cdot \left(a(\bx) \nabla \psi^{i}_{R,T}(\bx) \right)  = \nabla \cdot a(\bx) \mathbf{e}_i, \quad \text{ in } K_R
\end{equation}
equipped with homogeneous Dirichlet BCs on $\partial K_{R}$. We partition the domain $K_R$ with uniform simplicial elements of size $h$ and we introduce a finite elements space $S_h\subset H^1_0(K_R)$ made of piecewise polynomial functions of degree $s$ on the simplices. 
The finite elements discretization of the corrector problem reads: Find $\psi^i_{R,T,h}\in S_h$ such that 
	\begin{equation}\label{eq:ell cell FE pb}
	\int_{ K_{R}} \frac{1}{T} \psi^i_{R,T,h} w_h + a(\bx)\left(\nabla\psi^i_{R,T,h} +\mathbf{e}_i\right)\cdot \nabla w_h \,d\bx= 0 ,\quad\forall w_h\in S_h,\quad i=1,\dots,d,
	\end{equation} 
	and the upscaled tensor is defined as (we only indicate the $R$ parameter, as the optimal values $L$ and $T$ depend on $R$)
	\begin{equation}\label{eq: dis upscale formula}
	a^{0,R,h}_{ij} =  \int_{K_L} \left( \nabla  \psi^i_{R,T,h} + \mathbf{e}_i \right) \cdot a(\bx) \left(\nabla  \psi^j_{R,T,h} + \mathbf{e}_j \right) \mu_L(\bx) \,d\bx.
	\end{equation} 
	Hence, the total error for the upscaled coefficients is:
	\begin{equation*}
		|a^{0,R,h}_{ij} -a^{0}_{ij}| \lesssim h^{s+1} + R^{-4} (\log R)^8,
	\end{equation*}
	where the second term in the error estimate is the modelling error.
	The finite elements corrector $\psi^i_{R,h}$ is computed by solving the linear system
	\begin{equation}\label{eq: dis ell cell pb}
	A_h \mathbf{v}_i = \mathbf{b}_i,\text{ for }i=1,\dots,d,
	\end{equation} 
	where $A_h$ is a $N\times N$ symmetric positive definite matrix and $\mathbf{v}_i$ and $\mathbf{b}_i$ are the coordinates of, respectively, $\psi_h$ and $-\nabla\cdot\left(a(y)\mathbf{e}_i\right)$ in the finite element space and given a lagrangian basis. 
	Here, ${N=\mathcal{O}(R^dh^{-d})}$ is the dimension of the space $S_h$.
	The linear system can be solved in several ways using direct or iterative methods, whose cost depends on $N$. For example, for LU factorization the number of operations is $\mathcal{O}(N^{3/2})$ Ref. \refcite{GeN88}, for Conjugate Gradient (CG) it is $\mathcal{O}(\sqrt{\kappa}N)$\footnote{$\kappa$ is the condition number} Ref. \refcite{Saa03}, while for multigrid (MG) it is $\mathcal{O}(N)$. 
	\add{In the following analysis we will consider the LU factorization as linear solver, as it is the one that we used in practice to solve \eqref{eq:ell cell FE pb}.} We require the total errors to scale as a given tolerance $tol$, so $R = \mathcal{O}(tol^{-1/4})$ (we omitted the logarithmic term) and $h= \mathcal{O}(tol^{1/(s+1)})$. Hence, the total cost is 
\[
\add{C =  \mathcal{O}(N^{3/2}) = \mathcal{O}(R^{3d/2} h^{-3d/2}) = \mathcal{O}(tol^{-\frac{3d}{8}-\frac{3d}{2(s+1)}}).}
\]
\label{sec:Reg_Elliptic}
\subsection{Parabolic case with explicit stabilised time integration methods}
\label{sec:parabolic cost}
Let us consider the parabolic cell problem \cref{eq:parabolic dirichlet problem} with the upscaling formula \cref{eq: a0 parabolic dirichlet}.
As in the elliptic case, one can discretize \cref{eq:parabolic dirichlet problem} in space and compute an approximation $u_h^i(t)$ of $u^i(\cdot,t)$ in the $N$-dimensional finite elements space $S_h$. 
\corr{For simplicity of notation, we will omit the superscript $i$.}
For a given basis of $S_h$, the function $u_h(t)$ is uniquely determined by the vectorial function $\mathbf{w}_h:[0,T]\mapsto \R^N$, that solve the semi-discrete problem:
\begin{equation}\label{eq: semidisc parab cell pb}
\frac{d}{dt}\mathbf{w}_h   = - M_h^{-1}A_h  \mathbf{w}_h.
\end{equation}
We assume that the mass matrix $M_h$ is easy to invert (which hold, e.g., in the case of mass lumping or discontinuous Galerkin FEs), so that the cost of the right-hand side evaluation is negligible with respect to the solution of the ODE system. The differential equation \eqref{eq: semidisc parab cell pb} is solved by an explicit stabilised time integration scheme of order $r$. Examples of second order methods are RKC2 (Ref. \refcite{HoS80}) and ROCK2 (Ref. \refcite{AbM01}), while ROCK4 (Ref. \refcite{Abd02}) is a fourth order method. The fully discrete problem reads
\begin{equation*}
\mathbf{W}_{k} = \Phi_h( \mathbf{W}_{k-1} ), \text{ for }k=1,\dots, N_t,
\end{equation*}
where the function $\Phi_h$ identifies the time integration method \corr{and $N_t$ the number of time steps}.
The computed sequence $\lbrace \mathbf{W}_{k}\rbrace_{k=0}^{N_t} \subset \R^N$ is an approximation, at times $t_k = k\Delta t$, of $\mathbf{w}(t_k)$ and it determines (via the finite elements basis) a sequence $\lbrace U_{k}\rbrace_{k=0}^{N_t}\subset S_h$. The discrete approximation of the homogenized tensor is 
\begin{equation*}
a^{0,R,h,\Delta t}_{ij} = \int_{K_L} a_{ij}(y) \mu_L(\bx) \,d\bx - 2  \mathcal{Q} \left( \int_{K_L} U_{k} U^j_{k} \mu_L(y) \,d\bx, \Delta t\right) ,
\end{equation*} 
where $\mathcal{Q}(\cdot,\Delta t)$ is a quadrature rule on the discretization  $t_k = k\Delta t$ of order at least $r$ \corr{(where $r$ is the order of the time integration scheme)}. Hence, the total error for the upscaled coefficients is:
\begin{equation}\label{eq: tol rate}
|a^{0,R,h, \Delta t}_{ij} -a^{0}_{ij}| \le C \left( h^{s+1} +  \Delta t^r + R^{-(q+1)} \right),
\end{equation}
where we have assumed that, for sufficiently large $R$, the term $R^{-(q+1)}$ dominates the exponential term in the resonance error bound. This is also the convergence rate that we reported in the numerical examples of \Cref{subsec:numerical 2d smooth periodic,subsec:numerical 2d nonsmooth periodic}.
\corr{Here, the constant $C$ grows linearly with the final time $T$, whose optimal value scales as $R-L$. However, the ratio $(R-L)/\sqrt{8\beta\lambda_0}$
is in general ${\cal O}(1)$, so we can consider $T=\mathcal{O}(1)$ in the range of values used for $R$ and $L$.} 
In order for the error to scale as $tol$, we require that all the three summands in \eqref{eq: tol rate} scale as $tol$:
\[
R = \mathcal{O}(tol^{-\frac{1}{q+1}}),\quad h= \mathcal{O}(tol^{\frac{1}{s+1}}),\quad \Delta t= \mathcal{O}(tol^{\frac{1}{r}}).
\]

The global computational cost is $\mathcal{O}(Nn_SN_t)$, where  $N_t= T/\Delta t$ is the number of time steps,  $n_S$ is the number of function evaluations (stages) per time step for a stabilised method and $N=\mathcal{O}(R^dh^{-d})$ is the cost of each function evaluation which, in the linear case, is the cost of multiplying a sparse $N\times N$ matrix by a vector in $\R^N$. 
Since we are using a stabilised method we need to satisfy the weak stability condition $\rho \Delta t = c n_S^2$, where $\rho$ is the spectral radius of the Jacobian of the ODE \eqref{eq: semidisc parab cell pb} and $n_S$ is the number of stages for each time step. As $\rho$ is the spectral radius of $M_h^{-1}A_h$, it scales as $h^{-2}$. Therefore, $n_S = \mathcal{O}(\Delta t^{1/2} h^{-1})$.
From the fact that $T = \mathcal{O}(1)$ one derives that the total cost is
\[
Cost = \mathcal{O} (R^d h^{-d} \Delta t^{1/2} h^{-1} \Delta t^{-1} ) = \mathcal{O} ( tol^{-\frac{d}{q+1} - \frac{d+1}{s+1} - \frac{1}{2r}}).
\]
\subsection{Comparison of the parabolic and the \add{regularised} elliptic models}
We conclude the paper by comparing the computational cost of the elliptic regularised and the parabolic models.
In \cref{table:comp cost tol2}, we summarize the dependency of computational cost and the error on resonance and discretization parameters, as well as the scaling of the cost for a given tolerance.
\begin{table}[h!]
	\centering
	\begin{tabular}{|c| c c |}
		\hline
		Cell problem & Parabolic & Regularised Elliptic, Ref. \refcite{Glo11} \\
		\hline
		Error & $R^{-q-1}+ h^{s+1}+ \Delta t^r$ & $R^{-4}(\log R)^8+ h^{s+1}$ \\[1ex]
		Computational cost & $R^{d}h^{-d-1}\Delta t^{-\frac{1}{2}}$  &  $R^{3d/2} h^{-3d/2}$ \\[1ex]
		Computational cost ($tol$) & $tol^{-\frac{d}{q+1} - \frac{d+1}{s+1} - \frac{1}{2r}}$ & $tol^{-\frac{3d}{8}-\frac{3d}{2(s+1)}}$\\[1ex]
		\hline 
	\end{tabular}
	\caption{Error and computational cost for two homogenization approaches.}
	\label{table:comp cost tol2} 
\end{table}

The parabolic approach will be asymptotically more efficient than the regularised elliptic one if:
\begin{equation}
	\label{eq: efficiency condition}
	\add{\frac{d}{q+1} +  \frac{d+1}{s+1} +  \frac{1}{2r} < \frac{3d}{8} + \frac{3d}{2(s+1)}}.
\end{equation}
So, for $\mathbb{P}1$ finite element with ROCK2 time integration ($r=2$ and $s=1$), the parabolic approach is more efficient than the regularised elliptic one if $q>3$ for problems in dimension $d=2$ or if $q\ge 2$ for dimension $d=3$. 

\newadd{%
\begin{remark}
	As the theoretical estimates show, the particular advantage of the parabolic approach (when compared to Ref. \refcite{Glo11,GlH16}) will be observed for three dimensional problems with large values for $q$. 
	In particular, we expect the parabolic corrector to be particularly efficient in 3D, first because of \eqref{eq: efficiency condition}, second because of cost issues in relation with implementation. Indeed, while no matrix inversion is involved for solving the parabolic corrector problem in any dimension via explicit stabilised solvers (e.g. ROCK methods) this is not the case for the elliptic corrector problem. In 3D, even though the linear solver with $\mathcal{O}(N)$ complexity such as multigrid can be implemented, the prefactor will depend on the roughness of the conductivity tensor and the type of preconditioner used. However, in what follows we will concentrate on a 2D comparison as an exhaustive comparison with other approaches is not the aim of the paper.
\end{remark}
}
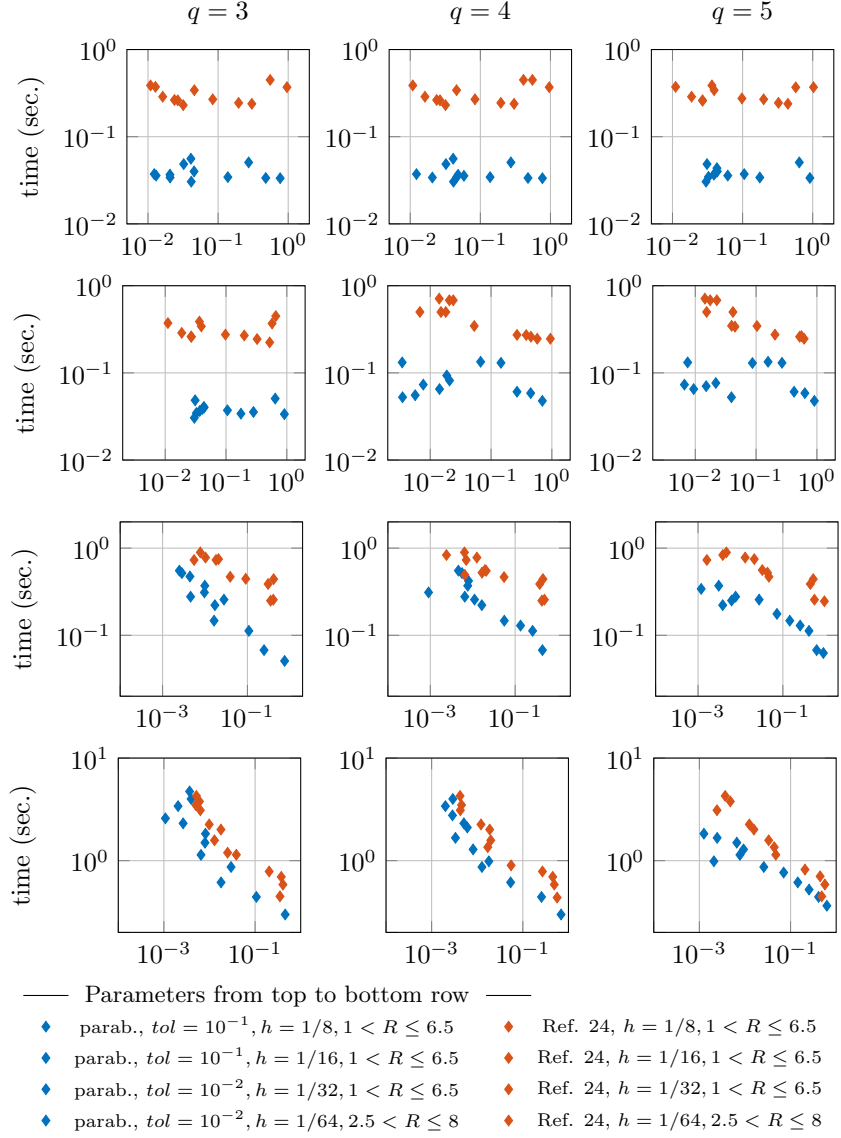
\begin{figure}
	\centering
%
%
\definecolor{mycolor1}{rgb}{0.00000,0.44706,0.74118}%
\definecolor{mycolor2}{rgb}{0.85098,0.32549,0.09804}%
\begin{tikzpicture}

\begin{axis}[%
width=.2\linewidth,
height=.12\textheight,
scale only axis,
xmode=log,
xmin=0.005,
xmax=2,
xminorticks=false,
ymode=log,
ymin=0.01,
ymax=1,
yminorticks=false,
ylabel={time (sec.)},
axis background/.style={fill=white},
title style={font=\bfseries},
title={$q=3$},
xmajorgrids,
ymajorgrids,
legend style={font=\color{white!15!black},draw=none, font=\scriptsize},
legend columns = {2},
legend entries = {{\small Parameters from top to bottom row             },{  \hphantom{invisible label}             },
				  {parab., $tol=10^{-1}, h=1/8,1<R\le6.5\quad$   },{  Ref. \refcite{Glo11}, $h=1/8,1<R\le6.5$}, %
			      {parab., $tol=10^{-1}, h=1/16,1<R\le6.5\quad$  },{  Ref. \refcite{Glo11}, $h=1/16,1<R\le6.5$}, %
	              {parab., $tol=10^{-2}, h=1/32,1<R\le6.5\quad$  },{  Ref. \refcite{Glo11}, $h=1/32,1<R\le6.5$}, %
                  {parab., $tol=10^{-2}, h=1/64,2.5<R\le8\quad$  },{  Ref. \refcite{Glo11}, $h=1/64,2.5<R\le8$}, %
},
legend to name = named
]


\addplot []
table[row sep=crcr]{%
	0 0\\
	100 100\\
};

\addplot []
table[row sep=crcr]{%
	0 0\\
	100 100\\
};


\addplot [only marks, mark=diamond*, mark options={solid, fill=mycolor1, mycolor1}]
  table[row sep=crcr]{%
0.767695079782247	0.033422\\
0.478273638088535	0.033621\\
0.272713910050792	0.050738\\
0.137697232001911	0.034395\\
0.0206611126388339	0.034042\\
0.0122765749256725	0.037244\\
0.0130514731501192	0.035822\\
0.0205571442939387	0.036731\\
0.0322991219532991	0.048495\\
0.0413978190828478	0.030488\\
0.0454123218297895	0.039985\\
0.0409382466246105	0.055897\\
};

\addplot [only marks, mark=diamond*, mark options={solid, fill=mycolor2, mycolor2}]
  table[row sep=crcr]{%
0.960420062725136	0.369777\\
0.552944999609993	0.448169\\
0.302585647320809	0.238407\\
0.196088326943248	0.244346\\
0.0838105809049866	0.268689\\
0.0457962137957358	0.341892\\
0.0318236446705344	0.230291\\
0.0108702783785496	0.387571\\
0.012782534300552	0.372979\\
0.0238351221580071	0.263679\\
0.0265656891032923	0.261489\\
0.0162122825960638	0.287416\\
};


\addplot [only marks, mark=diamond*, mark options={solid, fill=mycolor1, mycolor1}]
table[row sep=crcr]{%
	0 0\\
	100 100\\
};

\addplot [only marks, mark=diamond*, mark options={solid, fill=mycolor2, mycolor2}]
table[row sep=crcr]{%
	0 0 \\
	100 100\\
};

\addplot [only marks, mark=diamond*, mark options={solid, fill=mycolor1, mycolor1}]
table[row sep=crcr]{%
	0 0\\
	100 100\\
};

\addplot [only marks, mark=diamond*, mark options={solid, fill=mycolor2, mycolor2}]
table[row sep=crcr]{%
	0 0 \\
	100 100\\
};

\addplot [only marks, mark=diamond*, mark options={solid, fill=mycolor1, mycolor1}]
table[row sep=crcr]{%
	0 0\\
	100 100\\
};

\addplot [only marks, mark=diamond*, mark options={solid, fill=mycolor2, mycolor2}]
table[row sep=crcr]{%
	0 0 \\
	100 100\\
};

\end{axis}
\end{tikzpicture}
%
%
\definecolor{mycolor1}{rgb}{0.00000,0.44706,0.74118}%
\definecolor{mycolor2}{rgb}{0.85098,0.32549,0.09804}%
\begin{tikzpicture}

\begin{axis}[%
width=.2\linewidth,
height=.12\textheight,
scale only axis,
xmode=log,
xmin=0.005,
xmax=2,
xminorticks=false,
ymode=log,
ymin=0.01,
ymax=1,
yminorticks=false,
axis background/.style={fill=white},
title style={font=\bfseries},
title={$q=4$},
xmajorgrids,
ymajorgrids
]
\addplot [only marks, mark=diamond*, mark options={solid, fill=mycolor1, mycolor1}]
  table[row sep=crcr]{%
  	0.767695079782247	0.033422\\
  	0.478273638088535	0.033621\\
  	0.272713910050792	0.050738\\
  	0.137697232001911	0.034395\\
  	0.0582836186887982	0.035581\\
  	0.0206611126388339	0.034042\\
  	0.0122765749256725	0.037244\\
  	0.0322991219532991	0.048495\\
  	0.0413978190828478	0.030488\\
  	0.0462754715411727	0.034631\\
  	0.0481195137657285	0.036546\\
  	0.0409382466246105	0.055897\\
};

\addplot [only marks, mark=diamond*, mark options={solid, fill=mycolor2, mycolor2}]
  table[row sep=crcr]{%
  	0.960420062725136	0.369777\\
  	0.552944999609993	0.448169\\
  	0.412353519634973	0.448259\\
  	0.302585647320809	0.238407\\
  	0.196088326943248	0.244346\\
  	0.0838105809049866	0.268689\\
  	0.0457962137957358	0.341892\\
  	0.0318236446705344	0.230291\\
  	0.0108702783785496	0.387571\\
  	0.0238351221580071	0.263679\\
  	0.0265656891032923	0.261489\\
  	0.0162122825960638	0.287416\\
};

\end{axis}
\end{tikzpicture}
%
%
\definecolor{mycolor1}{rgb}{0.00000,0.44706,0.74118}%
\definecolor{mycolor2}{rgb}{0.85098,0.32549,0.09804}%
\begin{tikzpicture}
\begin{axis}[%
width=.2\linewidth,
height=.12\textheight,
scale only axis,
xmode=log,
xmin=0.005,
xmax=2,
xminorticks=false,
ymode=log,
ymin=0.01,
ymax=1,
yminorticks=false,
axis background/.style={fill=white},
title style={font=\bfseries},
title={$q=5$},
xmajorgrids,
ymajorgrids
]
\addplot [only marks, mark=diamond*, mark options={solid, fill=mycolor1, mycolor1}]
  table[row sep=crcr]{%
0.911970857398475	0.033621\\
0.644000712567679	0.050738\\
0.176035221247206	0.034042\\
0.105542917054883	0.037244\\
0.0614750594158689	0.035822\\
0.0390319326704425	0.036731\\
0.0310682316986121	0.048495\\
0.0301235348159865	0.030488\\
0.032633879165347	0.034631\\
0.0409298579814018	0.038686\\
0.0435333848203213	0.039985\\
0.0430302229545827	0.043591\\
};

\addplot [only marks, mark=diamond*, mark options={solid, fill=mycolor2, mycolor2}]
  table[row sep=crcr]{%
1.02653406136752	0.369777\\
0.572758733088948	0.368901\\
0.442677911439342	0.238407\\
0.324562599002467	0.244346\\
0.19965953626148	0.268689\\
0.0978045988401324	0.274795\\
0.039242878566206	0.341892\\
0.0365333199791811	0.387571\\
0.0268516008400151	0.25886\\
0.0110805838409538	0.372879\\
0.0268077080607975	0.261489\\
0.0187089300803927	0.287416\\
};

\end{axis}
\end{tikzpicture}
%
%
\definecolor{mycolor1}{rgb}{0.00000,0.44706,0.74118}%
\definecolor{mycolor2}{rgb}{0.85098,0.32549,0.09804}%
\begin{tikzpicture}

\begin{axis}[%
width=.2\linewidth,
height=.12\textheight,
scale only axis,
xmode=log,
xmin=0.002,
xmax=2,
xminorticks=false,
ymode=log,
ymin=0.01,
ymax=1,
yminorticks=false,
ylabel={time (sec.)},
axis background/.style={fill=white},
xmajorgrids,
ymajorgrids
]
\addplot [only marks, mark=diamond*, mark options={solid, fill=mycolor1, mycolor1}]
  table[row sep=crcr]{%
0.911970857398475	0.033621\\
0.644000712567679	0.050738\\
0.282745974937313	0.035581\\
0.176035221247206	0.034042\\
0.105542917054883	0.037244\\
0.0310682316986121	0.048495\\
0.0301235348159865	0.030488\\
0.032633879165347	0.034631\\
0.0369643456692366	0.036546\\
0.0409298579814018	0.038686\\
0.0435333848203213	0.039985\\
0.0433526495367936	0.040653\\
};

\addplot [only marks, mark=diamond*, mark options={solid, fill=mycolor2, mycolor2}]
  table[row sep=crcr]{%
0.658726125647642	0.448169\\
0.572758733088948	0.368901\\
0.522601034674627	0.223633\\
0.324562599002467	0.244346\\
0.19965953626148	0.268689\\
0.0978045988401324	0.274795\\
0.039242878566206	0.341892\\
0.0365333199791811	0.387571\\
0.0268516008400151	0.25886\\
0.0110805838409538	0.372879\\
0.0268077080607975	0.261489\\
0.0187089300803927	0.287416\\
};

\end{axis}
\end{tikzpicture}
%
%
\definecolor{mycolor1}{rgb}{0.00000,0.44706,0.74118}%
\definecolor{mycolor2}{rgb}{0.85098,0.32549,0.09804}%
\begin{tikzpicture}

\begin{axis}[%
width=.2\linewidth,
height=.12\textheight,
scale only axis,
xmode=log,
xmin=0.002,
xmax=2,
xminorticks=false,
ymode=log,
ymin=0.01,
ymax=1,
yminorticks=false,
axis background/.style={fill=white},
xmajorgrids,
ymajorgrids
]
\addplot [only marks, mark=diamond*, mark options={solid, fill=mycolor1, mycolor1}]
  table[row sep=crcr]{%
  	0.697836708603367	0.047739\\
  	0.448516660454741	0.058571\\
  	0.267312300607474	0.06073\\
  	0.145483423736705	0.130359\\
  	0.0674017536025963	0.134018\\
  	0.00350207675449897	0.052561\\
  	0.00565747070715328	0.05545\\
  	0.00345534516203086	0.131849\\
  	0.00768028927707275	0.073333\\
  	0.0142094508563669	0.065216\\
  	0.0207759274903286	0.081779\\
  	0.0186870077526677	0.093333\\
};

\addplot [only marks, mark=diamond*, mark options={solid, fill=mycolor2, mycolor2}]
  table[row sep=crcr]{%
  	0.945572909600258	0.246177\\
  	0.575279497192267	0.246253\\
  	0.452307524582404	0.259997\\
  	0.378808606500008	0.271732\\
  	0.267105516428891	0.272706\\
  	0.0528894568735571	0.344979\\
  	0.0149975087744774	0.499092\\
  	0.0180088369073562	0.499289\\
  	0.0067517402696616	0.499519\\
  	0.0206206863932177	0.680994\\
  	0.0238846402362308	0.681165\\
  	0.0140338004155043	0.710663\\
};

\end{axis}
\end{tikzpicture}
%
%
\definecolor{mycolor1}{rgb}{0.00000,0.44706,0.74118}%
\definecolor{mycolor2}{rgb}{0.85098,0.32549,0.09804}%
\begin{tikzpicture}

\begin{axis}[%
width=.2\linewidth,
height=.12\textheight,
scale only axis,
xmode=log,
xmin=0.002,
xmax=2,
xminorticks=false,
ymode=log,
ymin=0.01,
ymax=1,
yminorticks=false,
axis background/.style={fill=white},
xmajorgrids,
ymajorgrids
]
\addplot [only marks, mark=diamond*, mark options={solid, fill=mycolor1, mycolor1}]
  table[row sep=crcr]{%
  	0.908351905242452	0.047739\\
  	0.634305712685099	0.058571\\
  	0.421540525529039	0.06073\\
  	0.266755122016592	0.130359\\
  	0.157299573099088	0.134018\\
  	0.0870562890737136	0.129722\\
  	0.0393065929830137	0.052561\\
  	0.0074898513654973	0.131849\\
  	0.00662491955573239	0.073333\\
  	0.00940854593174891	0.065216\\
  	0.0151014774980961	0.070424\\
  	0.0217547852737691	0.076493\\
};

\addplot [only marks, mark=diamond*, mark options={solid, fill=mycolor2, mycolor2}]
  table[row sep=crcr]{%
  	0.616480088161145	0.246253\\
  	0.556937943953865	0.264084\\
  	0.520397486217817	0.259997\\
  	0.20467652012638	0.274474\\
  	0.103399449333795	0.344979\\
  	0.0449614719155726	0.339312\\
  	0.0396155275795763	0.346574\\
  	0.0416461031060193	0.499092\\
  	0.0153756315542518	0.499519\\
  	0.0175664860827607	0.680994\\
  	0.0226096734161711	0.681165\\
  	0.0144061214387406	0.710663\\
};

\end{axis}
\end{tikzpicture}
%
%
%
\definecolor{mycolor1}{rgb}{0.00000,0.44706,0.74118}%
\definecolor{mycolor2}{rgb}{0.85098,0.32549,0.09804}%
\begin{tikzpicture}

\begin{axis}[%
width=.2\linewidth,
height=.12\textheight,
scale only axis,
xmode=log,
xmin=0.0001,
xmax=2,
xminorticks=false,
xtick={.0001,.001,.1},
xticklabels = {\hphantom{$10^{-4}$},$10^{-3}$,$10^{-1}$},
ymode=log,
ymin=0.02,
ymax=2,
yminorticks=false,
ytick={.02,.1,1,2},
yticklabels = {\hphantom{$10^{-2}$},$10^{-1}$,$10^0$,\phantom{$10^0$}},
ylabel={time (sec.)},
axis background/.style={fill=white},
xmajorgrids,
ymajorgrids
]
\addplot [only marks, mark=diamond*, mark options={solid, fill=mycolor1, mycolor1}]
  table[row sep=crcr]{%
  	0.754182577189824	0.050798\\
  	0.246700849983505	0.067597\\
  	0.108152750524525	0.112373\\
  	0.0163678806348604	0.147042\\
  	0.0279899677389759	0.256595\\
  	0.0171182985658236	0.221616\\
  	0.00457577589465769	0.277125\\
  	0.00968718769010797	0.31046\\
  	0.00989414719212365	0.370192\\
  	0.00441735803296992	0.47255\\
  	0.00285439998212579	0.516954\\
  	0.00248743895367685	0.551888\\
};

\addplot [only marks, mark=diamond*, mark options={solid, fill=mycolor2, mycolor2}]
  table[row sep=crcr]{%
  	0.351724786883321	0.249571\\
  	0.407574159078849	0.255879\\
  	0.414977037398549	0.440926\\
  	0.30768194933141	0.387574\\
  	0.0913288129781017	0.44457\\
  	0.0392533584500015	0.469024\\
  	0.0055289475689747	0.730457\\
  	0.0182033320428317	0.731615\\
  	0.020844444192223	0.748228\\
  	0.0102809162895998	0.779707\\
  	4.9999589082329e-06	0.833498\\
  	0.00776032095663722	0.893864\\
};

\end{axis}
\end{tikzpicture}
	\hspace*{.04cm}
%
%
\definecolor{mycolor1}{rgb}{0.00000,0.44706,0.74118}%
\definecolor{mycolor2}{rgb}{0.85098,0.32549,0.09804}%
\begin{tikzpicture}

\begin{axis}[%
width=.2\linewidth,
height=.12\textheight,
scale only axis,
xmode=log,
xmin=0.0001,
xmax=2,
xminorticks=false,
xtick={.0001,.001,.1},
xticklabels = {\hphantom{$10^{-4}$},$10^{-3}$,$10^{-1}$},
ymode=log,
ymin=0.02,
ymax=2,
yminorticks=false,
ytick={.02,.1,1,2},
yticklabels = {\hphantom{$10^{-2}$},$10^{-1}$,$10^0$,\phantom{$10^0$}},
axis background/.style={fill=white},
xmajorgrids,
ymajorgrids
]
\addplot [only marks, mark=diamond*, mark options={solid, fill=mycolor1, mycolor1}]
  table[row sep=crcr]{%
  	0.439983802184408	0.067597\\
  	0.258181724203408	0.112373\\
  	0.134172867074546	0.129278\\
  	0.055466019330445	0.147042\\
  	0.0110359381789079	0.256595\\
  	0.0163690222433888	0.221616\\
  	0.00649873283544928	0.277125\\
  	0.000908145449928009	0.31046\\
  	0.00762879688543787	0.370192\\
  	0.00794610306272797	0.420944\\
  	0.00560359419691765	0.516954\\
  	0.00458942900967006	0.551888\\
};

\addplot [only marks, mark=diamond*, mark options={solid, fill=mycolor2, mycolor2}]
  table[row sep=crcr]{%
  	0.426501177269661	0.249571\\
  	0.489405702027512	0.255879\\
  	0.452759057467647	0.440983\\
  	0.379973281958345	0.387574\\
  	0.0547865894026036	0.465526\\
  	0.00650439820124242	0.493167\\
  	0.0165777145151334	0.522708\\
  	0.0207855178329725	0.548276\\
  	0.0194206006950133	0.560154\\
  	0.00701441245427224	0.731024\\
  	0.012527343689475	0.779707\\
  	0.00240241024881112	0.833498\\
  	0.00638532449865146	0.893864\\
};

\end{axis}
\end{tikzpicture}
	\hspace*{.04cm}
%
%
\definecolor{mycolor1}{rgb}{0.00000,0.44706,0.74118}%
\definecolor{mycolor2}{rgb}{0.85098,0.32549,0.09804}%
\begin{tikzpicture}

\begin{axis}[%
width=.2\linewidth,
height=.12\textheight,
scale only axis,
xmode=log,
xmin=0.0001,
xmax=2,
xminorticks=false,
xtick={.0001,.001,.1},
xticklabels = {\hphantom{$10^{-4}$},$10^{-3}$,$10^{-1}$},
ymode=log,
ymin=0.02,
ymax=2,
yminorticks=false,
ytick={.02,.1,1,2},
yticklabels = {\hphantom{$10^{-2}$},$10^{-1}$,$10^0$,\phantom{$10^0$}},
axis background/.style={fill=white},
xmajorgrids,
ymajorgrids
]
\addplot [only marks, mark=diamond*, mark options={solid, fill=mycolor1, mycolor1}]
  table[row sep=crcr]{%
0.901699550017491	0.062616\\
0.626337588398807	0.067597\\
0.412857376640237	0.112373\\
0.255711771467013	0.129278\\
0.145583162219825	0.147042\\
0.0724698550496457	0.175973\\
0.0275425878097178	0.256595\\
0.00381014600172498	0.221616\\
0.0061894879418177	0.250608\\
0.00769121327108434	0.277125\\
0.00118671108263161	0.340572\\
0.00307654533275387	0.370192\\
};

\addplot [only marks, mark=diamond*, mark options={solid, fill=mycolor2, mycolor2}]
  table[row sep=crcr]{%
  	0.95269257339353	0.245857\\
  	0.554999325872034	0.255879\\
  	0.520380027371982	0.440983\\
  	0.44450720401614	0.387574\\
  	0.0468857541459299	0.469024\\
  	0.0433662235990934	0.522708\\
  	0.0327292192713506	0.560154\\
  	0.00161255592889085	0.731024\\
  	0.0211553720085781	0.748228\\
  	0.0129075386946695	0.779707\\
  	0.00383122309872509	0.833498\\
  	0.00465833442650771	0.893864\\
};

\end{axis}
\end{tikzpicture}%
	\hspace*{.06cm}\\
%
%
%
\definecolor{mycolor1}{rgb}{0.00000,0.44706,0.74118}%
\definecolor{mycolor2}{rgb}{0.85098,0.32549,0.09804}%
\begin{tikzpicture}

\begin{axis}[%
width=.2\linewidth,
height=.12\textheight,
scale only axis,
xmode=log,
xmin=0.0001,
xmax=1,
xtick={.0001,.001,.1,2},
xticklabels = {\hphantom{$10^{-4}$},$10^{-3}$,$10^{-1}$,\phantom{$10^0$}},
xminorticks=false,
ymode=log,
ymin=0.2,
ymax=10,
yminorticks=false,
ytick={.2,1,10},
yticklabels = {\hphantom{$10^{-2}$},$10^{0}$,$10^1$},
ylabel={time (sec.)},
axis background/.style={fill=white},
xmajorgrids,
ymajorgrids
]
\addplot [only marks, mark=diamond*, mark options={solid, fill=mycolor1, mycolor1}]
  table[row sep=crcr]{%
0.456733486351556	0.298875\\
0.106660930477909	0.44323\\
0.0179148724448513	0.614355\\
0.0296946575572751	0.867559\\
0.00651076127068031	1.138981\\
0.0079639318824345	1.497278\\
0.00816030711191594	1.832083\\
0.00263898333025701	2.311726\\
0.00107857302728158	2.584608\\
0.00204634248554261	3.400817\\
0.00395264441063648	3.994812\\
0.00366597808544203	4.728608\\
};

\addplot [only marks, mark=diamond*, mark options={solid, fill=mycolor2, mycolor2}]
  table[row sep=crcr]{%
0.351006686820844	0.44886\\
0.407629864427	0.584904\\
0.374772291915832	0.695747\\
0.203377581700108	0.784987\\
0.038792905497896	1.14231\\
0.0249360172950625	1.195157\\
0.012865322636998	1.579314\\
0.0177982465813773	2.010975\\
0.00985190891654209	2.254969\\
0.00629439180611747	3.100872\\
0.00500294052596302	3.486745\\
0.00603178597660068	3.778279\\
0.00515004627944599	4.274486\\
};

\end{axis}
\end{tikzpicture}
	\hspace*{.04cm}
%
%
\definecolor{mycolor1}{rgb}{0.00000,0.44706,0.74118}%
\definecolor{mycolor2}{rgb}{0.85098,0.32549,0.09804}%
\begin{tikzpicture}

\begin{axis}[%
width=.2\linewidth,
height=.12\textheight,
scale only axis,
xmode=log,
xmin=0.0001,
xmax=1,
xminorticks=false,
xtick={.0001,.001,.1,2},
xticklabels = {\hphantom{$10^{-4}$},$10^{-3}$,$10^{-1}$,\phantom{$10^0$}},
ymode=log,
ymin=0.2,
ymax=10,
yminorticks=false,
ytick={.2,1,10},
yticklabels = {\hphantom{$10^{-2}$},$10^{0}$,$10^1$},
axis background/.style={fill=white},
xmajorgrids,
ymajorgrids
]
\addplot [only marks, mark=diamond*, mark options={solid, fill=mycolor1, mycolor1}]
  table[row sep=crcr]{%
  	0.689603047207949	0.298875\\
  	0.256806865430349	0.44323\\
  	0.0539709026386373	0.614355\\
  	0.0126867511983305	0.867559\\
  	0.0180024773465771	0.986444\\
  	0.00811889947921503	1.28804\\
  	0.00333004287478223	1.668305\\
  	0.00606073643793527	2.105729\\
  	0.00509437918458808	2.311726\\
  	0.00286716814000316	2.764583\\
  	0.00200389732917858	3.400817\\
  	0.00291390056693802	3.994812\\
};

\addplot [only marks, mark=diamond*, mark options={solid, fill=mycolor2, mycolor2}]
  table[row sep=crcr]{%
  	0.564473840877217	0.436659\\
  	0.489245634302892	0.584904\\
  	0.452933800913293	0.695747\\
  	0.269212124055572	0.784987\\
  	0.0553123720678119	0.900324\\
  	0.0170181567868515	1.354284\\
  	0.0198173867489069	1.579314\\
  	0.0187842509098716	2.010975\\
  	0.0121045648321277	2.254969\\
  	0.00426090047986302	3.100872\\
  	0.0045258670783365	3.486745\\
  	0.00420274763172317	4.274486\\
};

\end{axis}
\end{tikzpicture}
	\hspace*{.04cm}
%
%
\definecolor{mycolor1}{rgb}{0.00000,0.44706,0.74118}%
\definecolor{mycolor2}{rgb}{0.85098,0.32549,0.09804}%
\begin{tikzpicture}

\begin{axis}[%
width=.2\linewidth,
height=.12\textheight,
scale only axis,
xmode=log,
xmin=0.0001,
xmax=1,
xminorticks=false,
xtick={.0001,.001,.1,2},
xticklabels = {\hphantom{$10^{-4}$},$10^{-3}$,$10^{-1}$,\phantom{$10^0$}},
ymode=log,
ymin=0.2,
ymax=10,
yminorticks=false,
ytick={.2,1,10},
yticklabels = {\hphantom{$10^{-2}$},$10^{0}$,$10^1$},
axis background/.style={fill=white},
xmajorgrids,
ymajorgrids
]
\addplot [only marks, mark=diamond*, mark options={solid, fill=mycolor1, mycolor1}]
  table[row sep=crcr]{%
0.625409285607719	0.363603\\
0.411619513377544	0.44323\\
0.254373253184966	0.522643\\
0.144185304646778	0.614355\\
0.0706376981288215	0.76636\\
0.0258871446606396	0.867559\\
0.00208280056581248	0.986444\\
0.00775153424051596	1.138981\\
0.00931989055720888	1.28804\\
0.00662518130367827	1.497278\\
0.00245692145807021	1.668305\\
0.00125825868189529	1.832083\\
};

\addplot [only marks, mark=diamond*, mark options={solid, fill=mycolor2, mycolor2}]
  table[row sep=crcr]{%
0.482706274944598	0.44886\\
0.56528014933279	0.585494\\
0.444781872271301	0.704214\\
0.206965970387026	0.81956\\
0.0474198968841926	1.14231\\
0.0438438946494637	1.354284\\
0.0331514640155835	1.579314\\
0.0157072449598516	2.010975\\
0.0124869309357084	2.254969\\
0.00243612424202325	3.100872\\
0.00477882148509347	3.778279\\
0.00368861256022676	4.274486\\
};

\end{axis}
\end{tikzpicture}%
	\hspace*{.05cm}%
%

	\ref{named}
	\caption{Comparison of the elapsed times ($y$-axis) against the approximation error \eqref{eq: reserr} ($x$-axis) for two homogenization approaches.}
	\label{fig: cheap tests}
\end{figure}

A comparison of the computational efficiency for the method described in the present paper and the regularised elliptic method of Ref. \refcite{Glo11} is discussed below. 
We take $L=2R/3$ and $T = \sqrt{\frac{1}{4\pi^2\alpha\beta}}\abs{R-L}$ (see \eqref{eq:choice L T}) for the parabolic case, and $L=2R/3$ and $T = L^2(\log L)^{-4}$ (as in Ref. \refcite{Glo11}) for the regularised elliptic case.	 
In the plots of \Cref{fig: cheap tests}, the elapsed real times spent to solve the parabolic equation \eqref{eq:parabolic dirichlet problem}  and the regularised elliptic equation \eqref{eq:Gloria_eq} are plotted against the corresponding error.
We consider the coefficients \eqref{eq:truncPyramids} and discretized the cell problem in space with $\mathbb{P}1$ finite elements on a triangular mesh of size $h$. The time integration for the parabolic problem is performed by the ROCK2 method with adaptive time stepping. So, the time integration scheme is fed with an absolute tolerance, $atol$, and a relative tolerance, $rtol$, instead of a fixed time discretization $\Delta t$. In our computations, we took $atol=rtol=tol$. 
It is also worth mentioning that $h$ and $tol$ have to be refined concurrently to avoid error saturation due to too coarse discretizations in time or space.
The tests were run for different choices of discretization and modelling parameters on a machine with 2 processors running at 2.6 GHz, with 14 cores each.
The value of $q$ varies along the rows, while the discretization parameters ($h$ and $tol$) vary along the columns. 
In each plot, we depict the error and elapsed computation time for increasing values of $R$. In the first three rows $R\in[1, 6.5]$, while $R\in [2.5, 8]$ for the last one.
The motivation for choosing $R$ in such ranges is the following: for coarse discretizations and large sampling domains, the discretization error is dominating thus undermining the property of high convergence of the methods. The threshold $R\approx6.5$, in the three coarser tests, has been empirically determined as the maximum domain size for which the discretization error does not exceed the resonance one. 
For the same reason, we excluded the values $R<2$ from the finest computations, because for $R<2$ the homogenized limit can be approximated more efficiently with coarser discretizations.

The plots of \Cref{fig: cheap tests} show that, for coarse values of tolerance $tol$, the parabolic method can reach the same error as the  regularised one with less computational effort.
In the two coarsest experiments ($h=1/8$, $tol = 10^{-1}$ and $h=1/16$, $tol = 10^{-1}$) the computational cost only slightly grows with increasing $R$, meaning that the elapsed time is dominated by overhead rather than actual computation. 
In the third row ($h=1/32$, $tol = 10^{-2}$), the growth of the computational cost is shown more clearly and the computational time for the parabolic method is always shorter than for the elliptic method, in particular for higher order filters. 
Such a positive effect of using high order filters is visible also in the last row ($h=1/64$, $tol = 10^{-2}$ and $q=5$).

In conclusion, the use of high order filers is beneficial for the parabolic case, as it allows to rapidly achieve high accuracy approximations with relatively low computational cost. 

\section{Conclusion}
In this work, we propose a novel approach for numerical homogenization, based on the solution of parabolic cell problems. We rigorously prove, by estimates of the fundamental solution, an arbitrary convergence rate for the resonance error in the periodic setting, but numerical tests demonstrate the same rates also for the boundary error in a non-periodic case. 
If filters of high order are used, the computation of the parabolic solutions by means of stabilised explicit solvers is asymptotically more efficient than the inversion of the discretized elliptic operator, required by elliptic approaches. 

\appendix
\section{\add{Proof of \cref{lemma: preliminary estimate sup-theta NEW:)}}}
\label{sec: appendix alter}
\add{
	\newcommand{\etd}{\ensuremath{\eta_{\Delta}}}
\newcommand{\doubleint}[2][\ensuremath{\,d\bx\,dt}]{\ensuremath{\int_{0}^{T}\int_{\Delta_2} #2 #1 }}
\newcommand{\const}{{\color{red}const}}
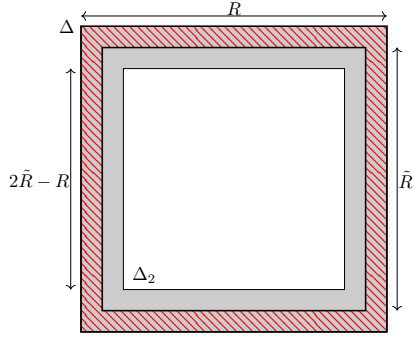
\begin{figure}[h!]
	\centering
	\scalebox{.7}{\begin{tikzpicture}
\draw[very thick](-2.9,-2.9) rectangle(2.9,2.9) ;
\filldraw[fill=black!20, even odd rule] (-2.9,-2.9) rectangle(2.9,2.9)  (-2.1,-2.1) rectangle(2.1,2.1) ;
\filldraw[fill=black!20, even odd rule, ,pattern=north west lines, pattern color=red] (-2.9,-2.9) rectangle(2.9,2.9)  (-2.5,-2.5) rectangle(2.5,2.5) ;

\draw[thick](-2.5,-2.5) rectangle(2.5,2.5) ;
%
%
\draw[<->] (-2.9,3.1) -- (2.9,3.1);
\draw (0,3) node[above] {$R$};
\draw[<->] (3.1,-2.5) -- (3.1,2.5);
\draw (3,0) node[right] {$\tilde{R}$};
\draw[<->] (-3.1,-2.1) -- (-3.1,2.1);
\draw (-3,0) node[left] {$2\tilde{R}-R$};
\draw (-2.9,2.9) node[left] {$\Delta$};
\draw (-2.05,-2.1) node[fill= white, above right]{$\Delta_2$};
\end{tikzpicture}}	
	\caption{The boundary layers $\Delta$ and $\Delta_2$.}
\end{figure}

\begin{lemma}\label{lemma: caccioppoli inequality green}
	Let $G(\bx,t;\by)$ be the fundamental solution \eqref{eq: whole definition greens function} in $K_R\times (0,+\infty)$ for coefficients $a(\cdot)\in \mathcal{M}\left(\alpha,\beta,K_R\right)$. 
	Recall that $\Delta= K_R\setminus R_{\tilde{R}}$, where $\tilde{R}$ is given in \Cref{def:boundary layer}.
	Let $\Delta_2:= K_R\setminus K_{2\tilde{R}-R}$ and $\by\in K_R $ such that $\mathop{dist}\left(\by,\Delta_2\right)\ge\epsilon$ for arbitrarily small $\epsilon>0$. Then, there exist a constant $M>0$ such that 
	\begin{equation*}
		\norm{\nabla G(\cdot,\cdot;\by) }_{L^2\left((0,T)\times \Delta\right)}
		\le
		\frac{2M\beta}{\alpha}
		\norm{ G(\cdot,\cdot;\by) }_{L^2\left((0,T)\times \Delta_2\right)}.
	\end{equation*}
\end{lemma}
\begin{proof}
	This is essentially a Caccioppoli estimate.
	First of all, we notice that $\Delta\subset \Delta_2$.
	Let us consider the smooth function $\etd\in C^{\infty}(\Delta_2)$ satisfying:
	\begin{equation}\label{eq: prop eta delta}
		0 \le \etd \le 1, \quad 
		\etd\equiv 1 \quad\text{in }\Delta, \quad 
		\etd=0 \quad\text{in }\partial \Delta_2 \setminus \partial \Delta, \quad  
		\text{and }\abs{\nabla\etd}\le M.
	\end{equation}
	Let us consider the weak form of the parabolic problem solved by the fundamental solution on $K_R\times(0,T)$ with $\phi:=\etd^2 G$ as a test function:
	\begin{equation}\label{eq: weak form}
	- \doubleint{ \partial_t G\phi} = \doubleint{\nabla\phi\cdot a(\bx)\nabla G},
	\end{equation}
	The identity above is true because $\phi= 0$ on $\partial\Delta_2$: indeed $G=0$ on the outer boundary $\partial K_R$, while $\etd=0$ on the inner boundary $\partial\Delta_2\setminus\partial \Delta$.
	Now, we bound from below the right-hand side of \eqref{eq: weak form}:
	\begin{equation}
		\label{eq: lower bound tmp}
		\begin{aligned}
			&\doubleint{\nabla\phi\cdot a(\bx)\nabla G}\\ 
			&= \doubleint{\etd^2 \nabla G \cdot a(\bx)\nabla G} + \doubleint{2 \etd G \nabla\etd\cdot a(\bx)\nabla G}\\
			&\ge \alpha \doubleint{\etd^2 \abs{\nabla G}^2 } + \doubleint{2 \etd G \nabla\etd\cdot a(\bx)\nabla G}.
		\end{aligned}
	\end{equation}	 
	The last term can be bounded by H\"older and Young inequalities:
	\begin{multline*}
		\doubleint{2 \etd G \nabla\etd\cdot a(\bx)\nabla G} \ge - 2\beta \doubleint{\abs{\etd G \nabla \etd\cdot \nabla G }}\\
		\ge - \frac{\alpha}{2} \doubleint{\etd^2 \abs{\nabla G}^2} - \frac{2\beta^2}{\alpha} \doubleint{\abs{\nabla \etd}^2 G^2}.
	\end{multline*}
	Hence, \eqref{eq: lower bound tmp} becomes 
	\begin{multline}
		\label{eq: lower bound}
		\doubleint{\nabla\phi\cdot a(\bx)\nabla G}\\ 
		\ge \frac{\alpha}{2} \doubleint{\etd^2 \abs{\nabla G}^2 } - \frac{2\beta^2M^2}{\alpha} \doubleint{G^2}.
	\end{multline}	 
	We now bound from above the left-hand side of \eqref{eq: weak form}:
	\begin{multline}\label{eq: upper bound}
		- \doubleint{ \partial_t G\phi} = -\doubleint{\frac{1}{2}\frac{d}{dt} \left(G^2 \etd^2\right)} \\
		= \frac{1}{2} \int_{\Delta_2}\underbrace{\abs{G(\bx,0;\by)}^2}_{\text{$=0$, since $\abs{x-y}>0$.}} \abs{\etd}^2\,d\bx - \frac{1}{2} \int_{\Delta_2}\abs{G(\bx,T;\by)}^2 \abs{\etd}^2 \,d\bx\le0
	\end{multline}
Therefore, from \eqref{eq: prop eta delta}, \eqref{eq: weak form}, \eqref{eq: lower bound} and \eqref{eq: upper bound}, we conclude that 
\begin{equation*}
	\int_{0}^{T} \int_{\Delta} \abs{\nabla G}^2 \,d\bx\,dt \le  \doubleint{\etd^2 \abs{\nabla G}^2 }  \\
	\le\left(\frac{2M\beta}{\alpha}\right)^2 \doubleint{G^2}.
\end{equation*}
\end{proof}

\begin{proof}[\Cref{lemma: preliminary estimate sup-theta NEW:)}]
	From \eqref{eq:theta through green function2} and from the symmetry of the fundamental solution, $G(\bx,t;\by)=G(\by,t;\bx)$, we estimate
	\begin{multline}\label{eq: initial estimate theta}
		\abs{\theta^i(\bx,t)} \le 
		\norm{G(\cdot,t;\bx)}_{L^2\left(\Delta\right)} \norm{\nabla\cdot\left(a(\cdot)\mathbf{e}_i\right)}_{L^2\left(\Delta\right)} \\
		+C_{\rho}\beta  \norm{G(\cdot,\cdot;\bx)}_{L^2\left((0,t)\times\Delta\right)} \norm{\nabla v^i}_{L^2\left((0,t)\times\Delta\right)} \\
		+C_{\rho}\beta  \norm{\nabla G(\cdot,\cdot;\bx)}_{L^2\left((0,t)\times\Delta\right)} \norm{v^i}_{L^2\left((0,t)\times\Delta\right)} ,
	\end{multline}
	where $C_{\rho}$ is an upper bound for the gradient of the cut-off function, $\nabla \rho$, of \Cref{def:cutoff}.
	
	Let us now consider a covering of $\Delta$, defined as $\Delta_K:=  \bigcup_{\by\in\partial K_{\frac{R + \tilde{R}}{2}}} K+\by$. 
	Then, $\abs{\Delta_K}= c(d) \abs{\partial K_{\frac{R + \tilde{R}}{2}}} \mathrm{diam}(K) \le C R^{d-1}$.
	By exploiting the periodic structure of $v^i$ we have that, for each $t>0$,
	\begin{gather*}
		\norm{v^i(\cdot,t)}_{L^2(\Delta)}
		\le 
		\norm{v^i(\cdot,t)}_{L^2(\Delta_K)}
		\le
		\left(\frac{\abs{\Delta_K}}{\abs{K}}\right)^{1/2}\norm{v^i(\cdot,t)}_{L^2(K)}, \\
		\norm{\nabla v^i(\cdot,t)}_{L^2(\Delta)}
		\le 
		\norm{\nabla v^i(\cdot,t)}_{L^2(\Delta_K)}
		\le 
		\left(\frac{\abs{\Delta_K}}{\abs{K}}\right)^{1/2}\norm{\nabla v^i(\cdot,t)}_{L^2(K)}.
	\end{gather*}
	Thus, it is sufficient to bound $\norm{v^i(\cdot,t)}_{L^2(K)}$ and $\norm{\nabla v^i(\cdot,t)}_{L^2(K)}$ in the periodic cell to derive corresponding estimates on $\Delta$. From standard energy estimates\footnote{\add{By testing the solution $v^i$ against itself, we know that $\frac{1}{2}\frac{d}{dt}\norm{v}^2_{L^2(K)} + \alpha\norm{\nabla v}^2_{L^2(K)}\le 0$. Thus, $\norm{\nabla v^i}^2_{L^2\left((0,+\infty)\times K\right)}\le \frac{1}{2\alpha}\norm{ v(\cdot,0)}^2_{L^2(K)}$ by integration in time. }} and the Poincar\'e-Wirtinger inequality in the space $W^1_{per}(K)$, it is easy to see that 
	\begin{subequations}\label{eq:estimates norms v}
		\begin{equation}
			\norm{\nabla\cdot\left(a(\cdot)\mathbf{e}_i\right)}_{L^2\left(\Delta\right)} 
			\le \left(\frac{\abs{\Delta_K}}{\abs{K}}\right)^{1/2} \norm{\nabla\cdot\left(a(\cdot)\mathbf{e}_i\right)}_{L^2\left(K\right)},
		\end{equation}
		\begin{multline}
			\norm{\nabla v^i}_{L^2\left((0,t)\times\Delta\right)} 
			\le \left(\frac{\abs{\Delta_K}}{\abs{K}}\right)^{1/2} \norm{\nabla v^i}_{L^2\left((0,t)\times K\right)} \\
			\le \left(\frac{\abs{\Delta_K}}{\abs{K}}\right)^{1/2} \frac{1}{\sqrt{2\alpha}} \norm{\nabla\cdot\left(a(\cdot)\mathbf{e}_i\right)}_{L^2\left(K\right)} ,
		\end{multline}
		\begin{multline}
			\norm{v^i}_{L^2\left((0,t)\times\Delta\right)} 
			\le \left(\frac{\abs{\Delta_K}}{\abs{K}}\right)^{1/2} \norm{v^i}_{L^2\left((0,t)\times K\right)} \\
			\le \left(\frac{\abs{\Delta_K}}{\abs{K}}\right)^{1/2} \frac{C_P}{\sqrt{2\alpha}} \norm{\nabla\cdot\left(a(\cdot)\mathbf{e}_i\right)}_{L^2\left(K\right)},	
		\end{multline}
	\end{subequations}
	where the Poincar\'e-Wirtinger constant $C_P$ is independent of $R$.

Let us now consider two arbitrary constants $b,c$ such that $b\ge \frac{\abs{R-L}}{\abs{2\tilde{R} -R -L}}$ and $0<c \le \nacon \frac{\abs{2\tilde{R} - R -L}^2}{\abs{R-L}^2}$ (since $L<R-2$, we know that $2\tilde{R} -R -L> 0$).
Then,  
\begin{equation}
	\label{eq:aux_const}
	\begin{aligned}
		\frac{1}{\abs{\tilde{R}-L}} &\le \frac{1}{\abs{2\tilde{R} -R -L}} \le \frac{b}{\abs{R-L}},\quad\text{and}\\
		e^{-\nacon\frac{\abs{\tilde{R}-L}^2}{  t}} &\le e^{-\nacon\frac{\abs{2\tilde{R}-R-L}^2}{  t}} \le e^{-c\frac{\abs{R-L}^2}{  t}} ,
	\end{aligned}
\end{equation}
where $\nacon$ is the constant in \eqref{eq:NashAronson} and $t>0$.%
By using \eqref{eq:NashAronson}, the first term in \eqref{eq: initial estimate theta} is estimated as
\begin{multline}
	\label{eq: norm L2 green}
	\norm{G(\cdot,t;\bx)}_{L^2\left(\Delta\right)} 
	\le \left[\int_{\Delta} \frac{C^2}{t^{d}} e^{-2\nacon\frac{\abs{\bx-\by}^2}{t}}\,d\by\right]^{\frac{1}{2}}\\
	\le  \frac{C \abs{\Delta}^{1/2}}{t^{d/2}} e^{-\nacon\frac{\abs{\tilde{R}-L}^2}{  t}}
	\le  \frac{C \abs{\Delta}^{1/2}}{t^{d/2}} e^{-c\frac{\abs{R-L}^2}{  t}},
\end{multline}
since $\abs{\bx-\by}\ge\abs{\tilde{R}-L}$ for $\bx\in K_L$ and $\by \in\Delta$, and thanks to \eqref{eq:aux_const}. Likewise, the $L^2\left((0,t)\times\Delta\right)$-norm of the fundamental solution is bounded by
\begin{equation*}
	\norm{G(\cdot,\cdot;\bx)}_{L^2\left((0,t)\times\Delta\right)} 
	\le C\abs{\Delta}^{1/2} \left[\int_{0}^{t} \frac{1}{s^d} e^{-2\nacon\frac{\abs{\tilde{R}-L}^2}{ s}}\,ds\right]^{\frac{1}{2}}.
\end{equation*}
The time integral can be bounded in different ways, depending on the dimension $d$: If $d\le 2$ it is straightforward to prove that
\begin{equation*}
	\int_{0}^{t} \frac{1}{s^d} e^{-2\nacon\frac{\abs{\tilde{R}-L}^2}{ s}}\,ds \le \frac{1}{2\nacon\abs{\tilde{R}-L}^2} t^{2-d} e^{-2\nacon\frac{\abs{\tilde{R}-L}^2}{t}},
\end{equation*}
while, when $d\ge 3$, the integral can be computed exactly\footnote{%
	The function $F(s) = \left(\frac{1}{2\nacon\abs{\tilde{R}-L}^2}\right)^{d-1} \left[ \sum_{k=0}^{d-2} \frac{(d-2)!}{k!} \left(2\nacon\frac{\abs{\tilde{R}-L}^2}{ t}\right)^k \right] e^{-2\nacon\frac{\abs{\tilde{R}-L}^2}{ t}}$ is the primitive of $f(s) = s^{-d}e^{-2\nacon\frac{\abs{\tilde{R}-L}^2}{ s}}$.%
}
and it can be bounded as
\begin{equation*}
	\int_{0}^{t} \frac{1}{s^d} e^{-2\nacon\frac{\abs{\tilde{R}-L}^2}{ s}}\,ds 
	\le
	\begin{cases}
		(d-2)! \left(\frac{1}{2\nacon\abs{\tilde{R}-L}^2}\right)^{d-1}, &  \text{ if } \frac{2\nacon\abs{\tilde{R}-L}^2}{ t} \le 1,\\
		\frac{ \kappa}{2\nacon\abs{\tilde{R}-L}^2} t^{2-d} e^{-2\nacon\frac{\abs{\tilde{R}-L}^2}{ t}},& \text{ if }\frac{2\nacon\abs{\tilde{R}-L}^2}{ t} > 1,
	\end{cases}
\end{equation*}
for some $\kappa>0$.
By assuming that $t\le 2\nacon \abs{\tilde{R}-L}^2$ and by \eqref{eq:aux_const}, one concludes that there exists $C>0$, independent of $R$ and $L$, such that:
\begin{equation}
	\label{eq: norm L2L2 green}
	\begin{aligned}
		\norm{G(\cdot,\cdot;\bx)}_{L^2\left((0,t)\times\Delta\right)} 
		&\le  \frac{  C \abs{\Delta}^{1/2}}{\sqrt{2\nacon}\abs{\tilde{R}-L} } \frac{1}{t^{d/2-1}} e^{-\nacon\frac{\abs{\tilde{R}-L}^2}{ t}}\\
		&\le \frac{  C \abs{\Delta}^{1/2}b}{\sqrt{2\nacon}\abs{R-L} } \frac{1}{t^{d/2-1}} e^{-c\frac{\abs{R-L}^2}{ t}}.
	\end{aligned}
\end{equation}
Finally, we use the result of \Cref{lemma: caccioppoli inequality green} and, again, \eqref{eq:aux_const} to bound the norm of the gradient:
\begin{multline}
	\label{eq: norm L2L2 gradient green}
	\norm{\nabla G(\cdot,\cdot;\bx)}_{L^2\left((0,t)\times\Delta\right)} 
	\le \frac{2M\beta}{\alpha}\norm{G(\cdot,\cdot;\bx)}_{L^2\left((0,t)\times\Delta_2\right)}\\
	\le \frac{2M\beta}{\alpha} \frac{  C \abs{\Delta_2}^{1/2}}{ \sqrt{2\nacon}\abs{2\tilde{R}-R-L} } \frac{1}{t^{d/2-1}} e^{-\nacon\frac{\abs{2\tilde{R}-R-L}^2}{  t}}\\
	\le \frac{2M\beta}{\alpha} \frac{  C \abs{\Delta_2}^{1/2} b }{ \sqrt{2\nacon}\abs{R-L} } \frac{1}{t^{d/2-1}} e^{-c\frac{\abs{R-L}^2}{  t}}.
\end{multline}
Moreover, we know that $\abs{\Delta}\le\abs{\Delta_K}$ and $\abs{\Delta_2}\le2\abs{\Delta_K}$. Thus, by plugging estimates \cref{eq:estimates norms v,eq: norm L2 green,eq: norm L2L2 green,eq: norm L2L2 gradient green} into \cref{eq: initial estimate theta}, and by assuming that the periodic cell is the unit square, $\abs{K}=1$, one gets
%
\begin{multline*}
	\abs{\theta^i(\bx,t)} \le 
	\frac{C \abs{\Delta}^{1/2}}{t^{d/2}} e^{-c\frac{\abs{R-L}^2}{  t}} 
	\left(\frac{\abs{\Delta_K}}{\abs{K}}\right)^{1/2} \norm{\nabla\cdot\left(a(\cdot)\mathbf{e}_i\right)}_{L^2\left(K\right)} \\
	+ C_{\rho}\beta 
	\frac{  C \abs{\Delta}^{1/2}b}{\sqrt{2\nacon}\abs{R-L} } \frac{1}{t^{d/2-1}} e^{-c\frac{\abs{R-L}^2}{ t}} 
	\left(\frac{\abs{\Delta_K}}{\abs{K}}\right)^{1/2} \frac{1}{\sqrt{2\alpha}} \norm{\nabla\cdot\left(a(\cdot)\mathbf{e}_i\right)}_{L^2\left(K\right)} \\
	+ C_{\rho}\beta 
	\frac{2M\beta}{\alpha} \frac{  C \abs{\Delta_2}^{1/2} b }{ \sqrt{2\nacon}\abs{R-L} } \frac{1}{t^{d/2-1}} e^{-c\frac{\abs{R-L}^2}{  t}}
	\left(\frac{\abs{\Delta_K}}{\abs{K}}\right)^{1/2} \frac{C_P}{\sqrt{2\alpha}} \norm{\nabla\cdot\left(a(\cdot)\mathbf{e}_i\right)}_{L^2\left(K\right)}\\
	\le \left[ 1 + C_{\rho}\beta \frac{  b t}{\sqrt{2\nacon}\abs{R-L} }  \frac{1}{\sqrt{2\alpha}} +  C_{\rho}\beta 
	\frac{2M\beta}{\alpha} \frac{ \sqrt{2} b t}{ \sqrt{2\nacon}\abs{R-L} }  \frac{C_P}{\sqrt{2\alpha}} \right] \times \\ 
	\times \frac{ C \abs{\Delta_K} \norm{\nabla\cdot\left(a(\cdot)\mathbf{e}_i\right)}_{L^2\left(K\right)}}{t^{d/2}}  e^{-c\frac{\abs{R-L}^2}{  t}}\\
	\le 
	\left(1+C_2 \frac{t}{\abs{R-L}}\right) 
	\frac{C_1 R^{d-1} \norm{\nabla \cdot\left(a(\cdot)\mathbf{e}_i\right)}_{L^2(K)}}{t^{d/2}}
	e^{-c\frac{\abs{R-L}^2}{t}},
\end{multline*}
with constants $C_1>0$ and $\displaystyle C_2:=  \frac{C_{\rho}\beta b}{2\sqrt{\alpha\nacon}}
\left(1 + 2\sqrt{2}MC_P\frac{\beta}{\alpha}\right)$.

\end{proof}%
}

\section*{Acknowledgements}
This work is partially supported by the Swiss National Science Foundation, grant No. $200020\_172710$.

	
	\bibliographystyle{plain}
	\bibliography{anmc}	
\end{document}